\documentclass[11pt]{amsart}

\usepackage{amsmath,amssymb,amsthm,mathtools,mathrsfs}
\usepackage{comment}
\usepackage[margin=1.15in]{geometry}
\usepackage{enumitem}
\usepackage{esint}
\usepackage[colorlinks=true,citecolor=blue,linkcolor=blue,urlcolor=blue]{hyperref}
\DeclareRobustCommand{\SkipTocEntry}[5]{}
\newtheorem{theorem}{Theorem}[section]
\newtheorem{proposition}[theorem]{Proposition}
\newtheorem{lemma}[theorem]{Lemma}
\newtheorem{corollary}[theorem]{Corollary}
\newtheorem{definition}[theorem]{Definition}

\newtheorem{question}[theorem]{Question}

\newtheorem{mainresult}{Theorem}

\newtheorem{maincorollary}[mainresult]{Corollary}

\newcommand{\Hpot}{\mathcal H}
\newcommand{\vap}{\mathcal V_\alpha^+}
 
\newcommand{\PSH}{\operatorname{PSH}}
\newcommand{\Ric}{\operatorname{Ric}}
\newcommand{\Scal}{\operatorname{Scal}}
\newcommand{\Ent}{\operatorname{Ent}}

\newcommand{\ddbar}{\sqrt{-1}\,\partial\bar\partial}

\title[A Variational Characterization of PSC K\"ahler metrics]{A Variational Characterization of Positive Scalar Curvature K\"ahler metrics}
\author{Zehao Sha}

\begin{document}

\begin{abstract}
We introduce the prescribed scalar curvature measure equation on a compact K\"ahler manifold. For a K\"ahler class of positive total scalar curvature, we prove that the following are equivalent: the existence of a positive scalar curvature K\"ahler metric, solvability of this equation for every admissible measure, \(d_1\)-coercivity of the associated functionals, and uniform geodesic stability along finite-energy \(d_1\)-geodesic rays. As a consequence, in each fixed K\"ahler class, the space of positive scalar curvature K\"ahler metrics is either empty or contractible. We further prove that every K\"ahler class on a positive-dimensional compact smooth toric K\"ahler manifold contains a torus-invariant metric of positive scalar curvature. Therefore, for every K\"ahler class, the prescribed scalar curvature measure equation admits a smooth solution for every admissible measure, unique modulo constants.
\end{abstract}

\maketitle
\tableofcontents
\section{Introduction}
\label{sec:introduction}
\subsection{Positive scalar curvature K\"ahler metrics}

On a closed manifold, the condition of positive scalar curvature \(\Scal(g)>0\) is open in the \(C^2\)-topology, and the existence of a positive scalar curvature (PSC) metric is preserved under surgeries of codimension at least three \cite{SchoenYau1979Structure,GromovLawson1980}. At the same time, PSC is subject to deep topological obstructions. For example, Schoen--Yau proved that \(T^m\) admits no PSC metric for \(3\leq m\leq 7\), and Gromov--Lawson subsequently established the result for every \(m\geq 3\) by spin methods \cite{SchoenYau1979Structure,GromovLawson1980Spin}. A recurring principle in Gromov's approach to positive scalar curvature is that a uniform positive lower bound for scalar curvature should force a loss of two dimensions at large scales; see \cite{Gromov2023FourLectures}. A quantitative expression of this principle is provided by Gromov--Zhu \cite{GromovZhu2024}.

This codimension-two viewpoint has a suggestive counterpart in complex geometry. A rational curve on a complex manifold \(X\) is the image of a nonconstant holomorphic map \(\mathbb P^1\to X\), which is topologically the two-sphere \(S^2\). The manifold \(X\) is called \emph{uniruled} if a general point of \(X\) lies on a rational curve \cite{Kollar1996}. On Hermitian manifolds, Yang proved that a compact complex manifold admits a Hermitian metric with positive Chern scalar curvature if and only if its canonical bundle is not pseudoeffective \cite[Theorem~1.3]{Yang2019}. More recently, Ou established the equivalence between the non-pseudoeffectivity of \(K_X\) and uniruledness for arbitrary compact K\"ahler manifolds \cite[Theorem~1.1]{Ou2025}. Since the Chern and Riemannian scalar curvatures of a K\"ahler metric are proportional, every compact K\"ahler manifold admitting a PSC K\"ahler metric is therefore uniruled.

In complex dimension two, combining Yau's result \cite{Yau1974}, the Enriques--Kodaira classification \cite[Chapter~VI, \S1]{BHPV2004}, and the scalar curvature approximation on point-blowups \cite[Theorem A]{Brown24}, one obtains the following equivalences for every compact complex surface \(X\) of K\"ahler type \cite[Theorem~B]{Brown24}:
\begin{enumerate}
    \item \(X\) admits a K\"ahler metric of positive total scalar curvature;
    \item \(X\) admits a PSC K\"ahler metric;
    \item \(\kappa(X)=-\infty\);
    \item \(X\) is obtained from \(\mathbb P^2\) or a ruled surface
    \(\mathbb P(E)\) by finitely many point blowups, where \(E\) is a
    rank-two holomorphic vector bundle over a compact Riemann surface.
\end{enumerate}

To our knowledge, no classification of compact PSC K\"ahler manifolds is known in complex dimension at least three. 

\subsection{The prescribed scalar curvature measure equation}

The classical prescribed scalar curvature problem asks which smooth functions on a closed manifold can be realized as the scalar curvature of a Riemannian metric. In real dimension at least three, Kazdan and
Warner obtained the well-known trichotomy for closed manifolds \cite{KazdanWarner1975Scalar,KazdanWarner1975Existence}.
A related problem in Hermitian geometry is to prescribe the Chern scalar curvature within a fixed conformal class. The constant-curvature case is the Chern--Yamabe problem studied in \cite{AngellaCalamaiSpotti2017}, while prescribed nonconstant Chern scalar curvature was considered in \cite{Fusi2022,Yu2023}. Such conformal changes do not preserve a fixed K\"ahler class in general.

Here we impose a different constraint: we fix a K\"ahler class \(\alpha\in\mathcal K_X\) and vary the metric only within this class. Let \(\omega \in \alpha\) be a K\"ahler metric. The equation
\[
        \Scal\left(\omega+\ddbar\phi\right)=f
\]
for a prescribed smooth function \(f\) is contained in the general coupled scalar-curvature system studied by Chen--Cheng \cite{ChenCheng2021I,ChenCheng2021II}. Building on the variational theory surrounding the Mabuchi \(K\)-energy \cite{Mabuchi1986,BBGZ2013,BermanBerndtsson2017,DR2017}, we prescribe a positive measure rather than the scalar curvature function. We explain this distinction in Section~\ref{subsec:scalar-measure-equation}.

Suppose that \(\alpha\) is a K\"ahler class with positive total scalar curvature \(S_\alpha:=2n\pi c_1(X) \cdot \alpha^{n-1}\), and define the space of admissible measures by
\[
\vap
:=
\left\{
\Omega\in
C^\infty
\bigl(
X,\Lambda_{\mathbb R}^{n,n}
\bigr):
\Omega>0,~
\int_X\Omega=S_\alpha
\right\}.
\]
Fix a reference K\"ahler metric \(\omega \in \alpha\), and write
\[
\Hpot_\omega:=\left\{\psi \in C^\infty(X,\mathbb R);~\omega_\phi:=\omega+\ddbar \psi >0\right\}.
\]
For \(\Omega\in\mathcal V_\alpha^+\), we consider the \textit{prescribed scalar curvature measure equation}
\begin{equation}
\Scal(\omega_\phi)\,\omega_\phi^n
=
\Omega.
\label{eq:intro-scalar-measure-equation}
\end{equation}
Every solution of \eqref{eq:intro-scalar-measure-equation} determines a K\"ahler metric of positive scalar curvature.

Let \(\mathcal M_\omega\) denote the Mabuchi \(K\)-energy,
\(E_\omega\) the Aubin--Yau energy, and 
\[
L_{\omega,\Omega}(\phi)
:=
\int_X\phi\,\Omega, \qquad \forall \phi \in \Hpot_\omega.
\]
We introduce the prescribed scalar curvature measure functional, which we call the \(\mathcal F\)-functional:
\begin{equation}
\mathcal F_{\omega,\Omega}
:=
\mathcal M_\omega
-
\frac{S_\alpha}{\alpha^n}\, E_\omega
+
L_{\omega,\Omega},
\label{eq:intro-prescribed-functional}
\end{equation}
whose Euler--Lagrange equation is precisely
\eqref{eq:intro-scalar-measure-equation}.

Let \((\mathcal E^1(X,\omega),d_1)\) denote the completion of \(\Hpot_\omega\) in \(d_1\)-distance. The functional
\(\mathcal F_{\omega,\Omega}\) extends to a
\(d_1\)-lower semicontinuous, geodesically convex functional on \(E^1(X,\omega)\); see Section~\ref{subsec:finite-energy-stability}. Since \(\mathcal F_{\omega,\Omega}\) is invariant under addition of constants, we work on \(\mathcal E^1(X,\omega)/\mathbb R\), represented by the canonical slice
\[
        \mathcal E^1_0(X,\omega)
        :=
        \left\{
        \phi\in\mathcal E^1(X,\omega):
        E_\omega(\phi)=0
        \right\},
\]
endowed with the restriction of the \(d_1\)-metric.

For a fixed reference metric \(\omega\in\alpha\) and 
\(\Omega\in\mathcal V_\alpha^+\), we say that
\(\mathcal F_{\omega,\Omega}\) is \(d_1\)-\emph{coercive} if there exist constants \(\delta>0\) and \(C>0\) such that for any \(\phi \in \mathcal E^1_0(X,\omega)\),
\[
        \mathcal F_{\omega,\Omega}(\phi)
        \geq
        \delta\,d_1(0,\phi)-C.
\]
As proved in Section~\ref{subsec:target-independence}, the validity of this condition is independent of the choice of reference metric and admissible measure. We therefore say simply that the \(\mathcal F\)-functional is \(d_1\)-coercive when this condition holds for one, equivalently every, choice of \(\omega\in\alpha\) and \(\Omega\in\mathcal V_\alpha^+\).

Let \(\ell:[0,\infty)\longrightarrow \mathcal E^1_0(X,\omega)\)
be a finite-energy \(d_1\)-geodesic ray with \(\ell_0=0\) parameterized with constant \(d_1\)-speed, meaning that
\[
        d_1(\ell_s,\ell_t)
        =
        |t-s|\,v_1(\ell),
        \qquad
        v_1(\ell):=d_1(\ell_0,\ell_1).
\] 
For \(\Omega\in\mathcal V_\alpha^+\), define the radial slope of \(\mathcal F_{\omega,\Omega}\) by
\[
        \mathcal F_{\omega,\Omega}\{\ell\}
        :=
        \lim_{t\to\infty}
        \frac{\mathcal F_{\omega,\Omega}(\ell_t)}{t}.
\]
The limit exists in \((-\infty,+\infty]\) by convexity along finite-energy geodesics. The radial slope is unchanged when \(\Omega\) is replaced by another
admissible measure and, under the natural identifications of finite-energy spaces and parallel rays, when the reference metric or the initial point is changed. We therefore write
\[
        \mathcal F_\alpha\{\ell\}
        :=
        \mathcal F_{\omega,\Omega}\{\ell\}.
\]
We say that the \(\mathcal F\)-functional is
\emph{uniformly geodesically stable} if there exists a constant \(\delta>0\) such that
\[
        \mathcal F_\alpha\{\ell\}
        \geq
        \delta\,v_1(\ell)
\]
for every nonconstant finite-energy \(d_1\)-geodesic ray starting from \(0\) and parameterized with constant \(d_1\)-speed.

In particular, if \(\alpha\) contains a PSC K\"ahler metric \(\omega_\phi\), then \(\phi\) solves \eqref{eq:intro-scalar-measure-equation} with prescribed measure \(\Omega:=\Scal(\omega_\phi)\,\omega_\phi^n\). We further assert that solvability for one admissible
measure can upgrade to solvability for every admissible measure, and that this property is characterized by coercivity and by the asymptotic behavior of \(\mathcal F\) along finite-energy geodesic rays. Our main analytic result is the following.

\begin{mainresult}
\label{thm:variational-main}
Let \(\alpha\) be a K\"ahler class with positive total scalar curvature, and let \(\omega \in \alpha\) be a fixed reference K\"ahler metric. Then the following conditions are equivalent:
\begin{enumerate}[label=\textup{(\roman*)}]
\item
the K\"ahler class \(\alpha\) contains a K\"ahler metric of positive scalar curvature;

\item
there exists \(\Omega\in\mathcal V_\alpha^+\)
such that equation \eqref{eq:intro-scalar-measure-equation}
admits a smooth solution in \(\Hpot_\omega\);

\item
for every \(\Omega\in\mathcal V_\alpha^+\), equation
\eqref{eq:intro-scalar-measure-equation}
admits a smooth solution in \(\Hpot_\omega\);

\item
the \(\mathcal F\)-functional is \(d_1\)-coercive;

\item
the \(\mathcal F\)-functional is uniformly geodesically stable.
\end{enumerate}
Moreover, for every fixed \(\Omega\in\mathcal V_\alpha^+\), equation \eqref{eq:intro-scalar-measure-equation} has at most one smooth solution up to addition of constants.
\end{mainresult}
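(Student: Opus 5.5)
The cycle I would prove is (i)$\Leftrightarrow$(ii)$\Rightarrow$(iv)$\Rightarrow$(iii)$\Rightarrow$(ii), followed by (iv)$\Leftrightarrow$(v) and then uniqueness. The equivalence (i)$\Leftrightarrow$(ii) is immediate. If $\omega_\phi$ is PSC, then $\Omega:=\Scal(\omega_\phi)\omega_\phi^n$ is a smooth positive $(n,n)$-form. Its total mass is $\int_X\Scal(\omega_\phi)\omega_\phi^n=S_\alpha$, since the total scalar curvature is cohomological. Conversely, a solution of \eqref{eq:intro-scalar-measure-equation} has $\Scal(\omega_\phi)=\Omega/\omega_\phi^n>0$.

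For (ii)$\Rightarrow$(iv) I would adapt the Berman--Darvas--Lu argument for cscK metrics, replacing the linear term by $L_{\omega,\Omega}$.
\begin{enumerate}
\item A smooth solution $\phi_0$ is a critical point of $\mathcal F_{\omega,\Omega}$. Since $\mathcal F_{\omega,\Omega}$ is $d_1$-lsc and geodesically convex on $\mathcal E^1$, and smooth critical points of such functionals are global minimizers (via the Berman--Berndtsson/BDL convexity along $C^{1,\bar1}$ geodesics), $\phi_0$ minimizes $\mathcal F_{\omega,\Omega}$.
\item The term $L_{\omega,\Omega}$ is the new ingredient. I expect it to be strictly convex in a quantitative sense, because $\Omega>0$ and $\phi\mapsto\int\phi\,\Omega$ is affine in $t$ along $d_1$-geodesics only if the ray is trivial. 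So I would argue by contradiction. Suppose coercivity fails, and take $\phi_j\in\mathcal E^1_0$ with $\mathcal F(\phi_j)\le \varepsilon_j d_1(0,\phi_j)-C_j$ and $d_1\to\infty$.
\item Joining $\phi_0$ to $\phi_j$ by unit-speed geodesics and using convexity gives uniformly bounded $\mathcal F$ at distance $t$. By the compactness theorem (bounded entropy plus bounded $d_1$ gives $d_1$-precompactness), these segments converge to a nonconstant geodesic ray $\ell$ from $\phi_0$ along which $\mathcal F_{\omega,\Omega}$ is bounded above, hence nonincreasing by convexity. Then $\ell_t\equiv$ minimizers for all $t$.
\item The remaining task is to show that minimizers are unique modulo constants, which also gives the final uniqueness claim. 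Here I would use the convexity of $\mathcal M_\omega$ and the affineness of $E_\omega$ along geodesics. Equality forces $t\mapsto\int\ell_t\Omega$ to be affine. Since $E_\omega(\ell_t)=0$ and $\ell_t$ is $t$-convex in the fiberwise sense, $\dot\ell$ has sup zero; positivity of $\Omega$ then forces $\dot\ell\equiv$ const, hence $\ell$ is constant, a contradiction.
\end{enumerate}
I expect this last rigidity step to be the main obstacle. Once the mechanism is in place, uniqueness follows the same way for two smooth solutions, giving the "Moreover" clause.

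For (iv)$\Rightarrow$(iii), I would use the target-independence proved in Section~\ref{subsec:target-independence}: coercivity holds for every $\Omega$. For a given $\Omega$, I would then solve the equation by the continuity path of Chen--Cheng,
\[
t\big(\Scal(\omega_\phi)\omega_\phi^n-\Omega\big)=(1-t)\big(\omega_\phi^n-\omega^n\big)\cdot c,
\]
a twisted version in which $t=0$ is trivially solvable. Openness comes from the linearization being invertible by convexity. For closedness, coercivity bounds $d_1$ and entropy along the path. The Chen--Cheng a priori estimates for $\Scal(\omega_\phi)=f$, with $f=\Omega/\omega_\phi^n$ rewritten as the coupled system $\omega_\phi^n=e^F\omega^n$, $\Delta_\phi F=-\Omega e^{-F}/\omega^n+\operatorname{tr}_\phi\Ric(\omega)$, then give $C^{k}$ bounds, since the right-hand side is controlled once $F$ is bounded from entropy. (iii)$\Rightarrow$(ii) is trivial.

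For (iv)$\Leftrightarrow$(v), the forward direction follows because $\mathcal F(\ell_t)\ge\delta t v_1-C$ after normalizing $E=0$, and $\ell_t$ stays in $\mathcal E^1_0$ as $E$ is affine. For the converse I would again argue by contradiction. A failure of coercivity produces, by the same compactness construction as above, a nonconstant unit-speed ray with slope $\le 0<\delta$, contradicting (v). This requires the Darvas--He/BDL ray-construction lemma, which applies verbatim since the entropy is bounded along the approximating segments.
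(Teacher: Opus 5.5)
The overall cycle matches the paper's, and your (iv)$\Leftrightarrow$(v) is essentially the paper's ray-extraction argument. But your step (iv)$\Rightarrow$(iii) fails at its starting point.

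On your path the coefficient of $\Scal(\omega_\phi)$ vanishes at $t=0$. So the linearization at $(t,\phi)=(0,0)$ is the second-order operator $-c\,\Delta_\omega$, which does not map $C^{4,\gamma}_0$ onto $C^{0,\gamma}_0$. The implicit function theorem therefore gives no solutions for small $t>0$. ``Invertible by convexity'' only gives openness at parameters $t>0$ where a solution is already known. This is exactly the loss-of-order problem the paper flags for the twisted path; it is the analogue of endpoint openness for Chen's $k=1$ path, which needed the separate work of Zeng and Hashimoto.

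The paper avoids it with the modified path $\bigl(a_t\Scal(\omega_\phi)+(1-a_t)\overline S_\alpha\bigr)\omega_\phi^n=\Omega_t$, where $a_t\ge\varepsilon_0>0$. The initial target $\Omega_0=\bigl(\varepsilon_0\Scal(\eta)+(1-\varepsilon_0)\overline S_\alpha\bigr)\eta^n$ is built so that a known metric $\eta$ solves the $t=0$ equation, which stays uniformly fourth-order elliptic. Note also that your constant $c$ must be negative for the path functional to be convex.

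Your closedness step is too quick as well. Entropy does not bound $F$. Chen--Cheng's estimates take a $C^0$ bound on the scalar curvature as input, but here $\Scal(\omega_\phi)=e^{-F}\Omega/\omega^n$ is bounded only once $\inf F$ is controlled. You need a genuine $C^0$ estimate for the coupled system that uses the sign of the exponential term. The paper does this in Section~\ref{subsec:uniform-C0}: an auxiliary Monge--Amp\`ere equation and the relative Ko{\l}odziej estimate bound $\sup F$ and $\lVert\phi\rVert_{C^0}$, and a maximum principle for $F+\Lambda_1\phi$ bounds $\inf F$.

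Your route to (ii)$\Rightarrow$(iv) differs from the paper's. The paper imports coercivity on $\Hpot_\omega$ from Chen's higher-degree twisted theory and extends it to $\mathcal E^1_0$ by Berman--Darvas--Lu entropy approximation. A Darvas--Rubinstein-type contradiction argument would be more self-contained, but your rigidity step as written is a non sequitur. Positivity of $\Omega$ plus fiberwise convexity only shows that $t\mapsto\ell_t(x)$ is affine for a.e.\ $x$. It says nothing about spatial constancy of $\dot\ell$, and $\dot\ell$ has mean zero against $\omega_{\phi_0}^n$, not sup zero.

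The step can be closed using the ray structure. Since $\ell_0=\phi_0$ is smooth, $\ell_t=\phi_0+tv$ with $v$ quasi-psh and $\omega_{\phi_0}+t\ddbar v\ge0$ for all $t\ge0$. Hence $\ddbar v\ge0$, so $v$ is constant, and $E_\omega(\ell_t)=0$ forces $v=0$. You also need $\phi_0$ to minimize over all of $\mathcal E^1_0$, not just $\Hpot_\omega$; this follows from Lemma~\ref{lem:critical-point-minimizer} plus entropy approximation.

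This trick does not transfer to the uniqueness clause, where the geodesic is a segment between two smooth solutions, so ``the same way'' is not accurate. There you need the paper's argument: positivity of $\pi_X^*\omega+\ddbar\Phi$ with $\Phi_{\zeta\bar\zeta}=0$ forces $\partial\dot\phi_s=0$.
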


Theorem~\ref{thm:variational-main} also gives a global description of the entire PSC locus in a fixed K\"ahler class. Denote by
\[
        \mathcal H_\omega^{\mathrm{psc}}
        :=
        \left\{
        \phi\in\mathcal H_\omega:
        \Scal(\omega_\phi)>0
        \right\}.
\]
Since adding a constant does not change the associated K\"ahler metric,
the scalar curvature measure map naturally descends to
\[
        \mathscr S_\omega:
        \mathcal H_\omega^{\mathrm{psc}}/\mathbb R
        \longrightarrow
        \mathcal V_\alpha^+,
        \qquad
        [\phi]
        \longmapsto
        \Scal(\omega_\phi)\,\omega_\phi^n .
\]

\begin{maincorollary}
\label{cor:global-parametrization-psc}
Let \(X\) be a compact K\"ahler manifold and let \(\alpha\) be a K\"ahler class. Suppose that \(\alpha\) contains a positive scalar curvature K\"ahler metric. Then \(\mathscr S_\omega\) is a homeomorphism with respect to the \(C^\infty\)-topologies. In particular, \(\mathcal H_\omega^{\mathrm{psc}}/\mathbb R\) is contractible.
\end{maincorollary}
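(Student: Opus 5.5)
We prove Corollary~\ref{cor:global-parametrization-psc} by showing that \(\mathscr S_\omega\) is well defined, bijective, continuous, and has continuous inverse, all in \(C^\infty\)-topologies. The conclusion then follows because \(\mathcal V_\alpha^+\) is convex.

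\emph{Well-definedness and bijectivity.} If \(\phi\in\mathcal H_\omega^{\mathrm{psc}}\), then \(\Omega:=\Scal(\omega_\phi)\omega_\phi^n\) is a smooth positive \((n,n)\)-form. Its total mass is \(\int_X \Scal(\omega_\phi)\omega_\phi^n=2n\pi c_1(X)\cdot\alpha^{n-1}=S_\alpha\). Hence \(\mathscr S_\omega\) takes values in \(\mathcal V_\alpha^+\), and it clearly descends to the quotient by constants. Since \(\alpha\) contains a PSC metric, condition (i) of Theorem~\ref{thm:variational-main} holds, so (iii) holds. Thus every \(\Omega\in\mathcal V_\alpha^+\) is attained by some smooth \(\phi\in\mathcal H_\omega\), and this \(\phi\) is automatically PSC because \(\Omega>0\). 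This gives surjectivity. Injectivity is exactly the uniqueness statement of Theorem~\ref{thm:variational-main}.

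\emph{Continuity.} The map \(\phi\mapsto\Scal(\omega_\phi)\omega_\phi^n\) is a nonlinear differential operator of order four in \(\phi\), with coefficients depending smoothly on \(\omega\). It is therefore continuous from \(C^{k+4}\) to \(C^k\) for every \(k\). Normalizing representatives by \(\sup_X\phi=0\) (or \(E_\omega=0\)) identifies the quotient topology with the subspace topology, so \(\mathscr S_\omega\) is continuous.

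\emph{Continuity of the inverse.} This is the main obstacle. Let \(\Omega_j\to\Omega\) in \(C^\infty\), and let \(\phi_j,\phi\) be the normalized solutions. The plan has three steps.

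First, we obtain a uniform \(d_1\)-bound. By the target-independence of coercivity from Section~\ref{subsec:target-independence}, the constants \(\delta,C\) can be taken uniform for \(\Omega'\) in a neighbourhood of \(\Omega\). Indeed, \(L_{\omega,\Omega_j}-L_{\omega,\Omega}\) is controlled by \(\|\Omega_j/\Omega-1\|_{C^0}\) times a quantity bounded by \(d_1(0,\cdot)+C\). Since \(\phi_j\) minimizes \(\mathcal F_{\omega,\Omega_j}\), we get \(\mathcal F_{\omega,\Omega_j}(\phi_j)\le \mathcal F_{\omega,\Omega_j}(0)\le C\), and therefore \(d_1(0,\phi_j)\) and the entropy of \(\omega_{\phi_j}^n\) are uniformly bounded.

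Second, we upgrade this to smooth bounds. We write the equation in Chen--Cheng form, \(\omega_{\phi_j}^n=e^{F_j}\omega^n\) with \(\Delta_{\omega_{\phi_j}}F_j=\Ric(\omega)\)-trace terms minus \(\Omega_j/\omega_{\phi_j}^n\). Because \(\Omega_j\) is bounded in every \(C^k\) and the entropy is bounded, the Chen--Cheng a priori estimates \cite{ChenCheng2021I,ChenCheng2021II} should give uniform \(C^{k}\)-bounds on \(\phi_j\) and \(F_j\) for all \(k\). The delicate point is that the right-hand side \(\Omega_j/\omega_{\phi_j}^n=\Omega_j e^{-F_j}/\omega^n\) depends on the unknown. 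I would handle this as in the twisted cscK setting, with \(e^{-F}\) controlled once \(F\) is bounded above and below via the Chen--Cheng \(C^0\) estimates.

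Third, we conclude by Arzel\`a--Ascoli and uniqueness. By Arzel\`a--Ascoli, every subsequence of \(\phi_j\) has a further subsequence converging in \(C^\infty\) to a normalized smooth solution for \(\Omega\). By uniqueness that limit is \(\phi\), so the whole sequence converges to \(\phi\). Hence \(\mathscr S_\omega^{-1}\) is continuous, and \(\mathscr S_\omega\) is a homeomorphism.

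\emph{Contractibility.} The set \(\mathcal V_\alpha^+\) is a convex subset of a topological vector space. Fixing \(\Omega_0\in\mathcal V_\alpha^+\), the map \((t,\Omega)\mapsto(1-t)\Omega+t\Omega_0\) stays in \(\mathcal V_\alpha^+\), since both positivity and total mass are preserved. This map is a continuous contraction of \(\mathcal V_\alpha^+\). Transporting it through the homeomorphism \(\mathscr S_\omega\) shows that \(\mathcal H_\omega^{\mathrm{psc}}/\mathbb R\) is contractible.
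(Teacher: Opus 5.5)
Your argument is correct in outline, but for the key step, continuity of \(\mathscr S_\omega^{-1}\), you take a genuinely different route from the paper.

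\textbf{The paper's route.} It argues locally via the implicit function theorem. Proposition~\ref{prop:automatic-nondegeneracy} plus elliptic regularity makes \(P_{\omega_{\phi_*}}:C^{k+4,\gamma}_0\to C^{k,\gamma}_0\) an isomorphism for every \(k\). This gives a local inverse \(\Psi_{k,\gamma}\) near each \(\Omega_*\), and uniqueness identifies it with the global inverse on smooth targets. Then \(C^\infty\)-convergence \(\Omega_j\to\Omega_*\) yields \(C^{k+4,\gamma}\)-convergence of the solutions for every \(k\).

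\textbf{Your route.} You argue globally by compactness: uniform coercivity for nearby targets, then uniform \(d_1\)- and entropy bounds, uniform smooth estimates, Arzel\`a--Ascoli, and uniqueness to identify subsequential limits. Your direct estimate \(|\int_X\phi\,(\Omega_j-\Omega)|\leq C\|\Omega_j/\Omega-1\|_{C^0}\,d_1(0,\phi)\) does hold, by Darvas's comparison of \(d_1\) with \(\int_X|\phi|\,\omega^n\).

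\textbf{What each buys.} The implicit-function route needs no new uniform estimates. It also gives smooth, not merely continuous, dependence of the solution on \(\Omega\) at each H\"older level. Your route never uses the linearized operator; it relies only on existence, uniqueness, and the closedness machinery. The price is that you must check that every a priori bound is uniform in the target.

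\textbf{A step to correct.} Your second step leans on Chen--Cheng in a circular way as written. Their estimates take an a priori \(C^0\)-bound on the scalar curvature as input. Here that scalar curvature is \(\Omega_j/\omega_{\phi_j}^n=\overline S_\alpha e^{h_j-F_j}\), writing \(\Omega_j=\overline S_\alpha e^{h_j}\omega^n\), and it is bounded only once \(F_j\) is bounded below. So you cannot get the bounds on \(F_j\) ``via the Chen--Cheng \(C^0\) estimates''.

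What breaks the circle is the sign of the exponential term:
\begin{itemize}
\item at a maximum point of \(F+\lambda\psi-\Lambda\phi\) where \(F>0\), the term is bounded, which gives the upper bound on \(F\);
\item at a minimum point of \(F+\Lambda_1\phi\), its positivity gives \(F\geq -C\).
\end{itemize}
This is exactly the content of Proposition~\ref{prop:uniform-C0-estimate} and Theorem~\ref{thm:uniform-smooth-estimates}. Apply them at \(a_t=1\) with \(h_t\) replaced by \(h_j\). Their proofs use only uniform \(C^k\)-bounds on the density together with the entropy bound, both of which you have. They also supply the uniform metric equivalence needed for the limit to lie in \(\Hpot_\omega\). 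Citing them closes step~2, and the rest of your argument goes through.
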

Thus the entire PSC locus in a fixed K\"ahler class is globally parametrized by a convex space of positive volume forms with prescribed total mass.

\subsection{Compact toric K\"ahler manifolds}

Toric geometry provides a natural setting in which the existence of PSC K\"ahler metrics can be reduced to a problem in convex analysis. Let \(X\) be a compact smooth toric K\"ahler manifold of complex dimension \(n\geq1\), and fix a K\"ahler class \(\alpha\). Averaging any K\"ahler form in \(\alpha\) over the compact
torus \(T^n\) produces a \(T^n\)-invariant reference K\"ahler form \(\omega\in\alpha\). After choosing a normalization of the corresponding moment map, its image is a Delzant polytope \(\Delta\subset\mathbb R^n\). The Guillemin--Abreu formalism identifies smooth \(T^n\)-invariant K\"ahler metrics in \(\alpha\) with strictly
convex symplectic potentials on \(\Delta\), modulo affine functions and subject to the standard Guillemin boundary behavior \cite{Guillemin1994,Abreu1998}.

A principal advantage of this description is that scalar curvature
admits the explicit Abreu formula. If \(u\) is a symplectic potential and \((u^{ij})\) is the inverse of its Hessian, then
\[
\Scal(u)
=
-
\sum_{i,j=1}^n
\frac{\partial^2 u^{ij}}
{\partial x_i\partial x_j}.
\]
Thus the search for a torus-invariant metric with prescribed scalar curvature becomes a fourth-order equation for a convex function on the Euclidean polytope \(\Delta\).

Toric manifolds have played an important role in the study of canonical K\"ahler metrics. Building on Guillemin's symplectic description and Abreu's scalar curvature formula, Donaldson formulated the toric cscK problem as a variational problem for convex functions on the moment polytope and introduced the corresponding stability condition \cite{Donaldson2002}. For toric surfaces, Donaldson subsequently proved the existence of cscK metrics under \(K\)-stability, while Chen--Li--Sheng established the corresponding existence result for extremal metrics under relative \(K\)-stability
\cite{Donaldson2009Toric,ChenLiSheng2018}. The relation between relative \(K\)-stability and the modified \(K\)-energy was studied by Zhou--Zhu \cite{ZhouZhu2008}.

For a smooth function \(A\) on \(\overline\Delta\), the prescribed scalar curvature question is translated to the solvability of the Abreu equation
\[
\Scal(u)=A.
\]
Li--Lian--Sheng proved that this equation admits a symplectic potential satisfying the standard Guillemin boundary behavior if and only if \((\Delta,A)\) is relatively \(\widehat K\)-polystable \cite[Theorem~1.3]{LiLianSheng2023}. In Section~\ref{subsec:positive-stable-density}, we prove that every Delzant polytope carries a function \(A\in C^\infty(\overline\Delta)\) such that
\(A(x)>0\) for every \(x\in\overline\Delta \), and such that \((\Delta,A)\) is relatively \(\widehat K\)-polystable. The resulting solution of the Abreu equation
therefore determines a \(T^n\)-invariant K\"ahler metric of positive scalar curvature in the original class \(\alpha\).

Applying Theorem~\ref{thm:variational-main} to this construction gives the following stronger conclusion.

\begin{mainresult}
\label{thm:toric-main}
Let \(X\) be a compact smooth toric K\"ahler manifold with positive dimension. Then every K\"ahler class contains a torus-invariant K\"ahler metric of positive scalar curvature. Consequently, for every \(\alpha\in\mathcal K_X\), every reference K\"ahler metric \(\omega\in\alpha\), and every \(\Omega\in\mathcal V_\alpha^+\), equation \eqref{eq:intro-scalar-measure-equation} admits a smooth solution in \(\Hpot_\omega\), unique up to addition of constants.
\end{mainresult}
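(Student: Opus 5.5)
The plan is to reduce Theorem~\ref{thm:toric-main} to the existence of a single positive, relatively \(\widehat K\)-polystable function on the moment polytope. The second assertion then follows from Theorem~\ref{thm:variational-main}.

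First I fix a Kähler class \(\alpha\) and average a Kähler form over \(T^n\) to obtain a \(T^n\)-invariant reference form \(\omega\in\alpha\), with Delzant moment polytope \(\Delta\). Under the Guillemin--Abreu correspondence, a symplectic potential \(u\) on \(\Delta\) with Guillemin boundary behavior and \(\Scal(u)=A\) gives a smooth \(T^n\)-invariant Kähler metric in \(\alpha\). By Abreu's formula, its scalar curvature, viewed as a function on \(X\), is \(A\) composed with the moment map. So it suffices to produce \(A\in C^\infty(\overline\Delta)\) with \(A>0\) on \(\overline\Delta\) and \((\Delta,A)\) relatively \(\widehat K\)-polystable, and then to invoke \cite[Theorem~1.3]{LiLianSheng2023}.

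For the construction of \(A\), recall that the Donaldson functional is
\[
\mathcal L_A(f)=\int_{\partial\Delta} f\,d\sigma-\int_\Delta Af\,dx,
\]
where \(d\sigma\) is the standard boundary measure. The "relative" condition concerns positivity of \(\mathcal L_A\) on convex functions modulo affine ones. The natural candidate is the extremal choice: \(A\) is the affine function \(\theta\) determined by the condition that \(\mathcal L_\theta\) vanishes on affine functions. With this choice the relative condition holds automatically for affine test functions. However, \(\theta\) need not be positive, and \((\Delta,\theta)\) need not be stable. So instead I would exploit the freedom to choose a non-affine \(A\).

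I would take \(A\) to be concentrated near the boundary: a smooth positive function approximating the measure \(d\sigma\), plus a small positive constant part, normalized so that \(\mathcal L_A\) annihilates affine functions. For example, set \(A=\varepsilon c+(1-\varepsilon)\rho_\delta\), where \(\rho_\delta\) mollifies \(d\sigma\) into a collar of width \(\delta\). One then checks that for convex \(f\) normalized at an interior point, \(\mathcal L_A(f)\) is comparable to the boundary integral of \(f\) minus an interior average. This quantity is positive unless \(f\) is affine, because the boundary values of a convex function dominate its interior averages, via a divergence/Jensen argument. The affine normalization forces the correction term to be of lower order. Positivity of \(A\) is then immediate from the construction, and \(\int_\Delta A\) automatically equals \(|\partial\Delta|_\sigma\), which is consistent with \(S_\alpha>0\).

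The main obstacle is to verify the full relatively \(\widehat K\)-polystable condition in the sense of Li--Lian--Sheng, including the uniform version with the \(\widehat{\ }\) refinement, for this non-affine \(A\). Concretely, I need a uniform lower bound \(\mathcal L_A(f)\ge\lambda\int_{\partial\Delta}f\,d\sigma\) for normalized convex \(f\). I would first try to prove this via Donaldson's approach: write \(\mathcal L_A(f)\) using integration by parts against a vector field \(V\) with \(\operatorname{div}V=-A\) and \(V\cdot\nu\) matching \(d\sigma\). Such a \(V\) can be chosen to point outward near the boundary, because \(A\) is concentrated there. Convexity then yields nonnegativity, and an extra positive margin yields uniformity.

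Finally, given the resulting \(T^n\)-invariant PSC metric, Theorem~\ref{thm:variational-main}, in the direction (i)\(\Rightarrow\)(iii) together with uniqueness, gives solvability for every \(\Omega\in\mathcal V_\alpha^+\) and every \(\omega\in\alpha\), unique up to constants. Here positive total scalar curvature holds because a PSC metric exists.
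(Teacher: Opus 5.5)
Your outer argument matches the paper's. You average to a \(T^n\)-invariant \(\omega\in\alpha\), find \(A\in C^\infty(\overline\Delta)\) with \(A>0\) and \((\Delta,A)\) relatively \(\widehat K\)-polystable, invoke Li--Lian--Sheng, and then apply Theorem~\ref{thm:variational-main}. But the only substantive step, the construction of \(A\), is not established, and the mechanisms you propose do not supply it.

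With \(A=\varepsilon c+(1-\varepsilon)\rho_\delta+(\text{correction})\) and \(w\) normalized at \(b\), one has
\(\mathcal L_A(w)=\varepsilon\bigl(\mathcal B_\sigma(w)-c\int_\Delta w\,d\mu\bigr)+(1-\varepsilon)\bigl(\mathcal B_\sigma(w)-\int_\Delta w\rho_\delta\,d\mu\bigr)-\int_\Delta w\cdot(\text{correction})\,d\mu\).
The first bracket is only bounded below by \((1-cC_1)\mathcal B_\sigma(w)\), where \(\int_\Delta w\,d\mu\le C_1\mathcal B_\sigma(w)\), and it has no evident sign. So the entire margin must come from a uniform inequality \(\int_\Delta w\rho_\delta\,d\mu\le(1-\kappa)\mathcal B_\sigma(w)\) with \(\kappa>0\), and the moment correction must be small compared with \(\kappa\). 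Your collar densities converge to \(d\sigma\), so the best \(\kappa_\delta\) tends to \(0\). Nothing in the proposal shows \(\kappa_\delta>0\) for any \(\delta\); the phrase ``boundary values dominate interior averages'' is not an inequality with a margin.

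The vector-field fallback fails for a related reason. For \(w\) normalized at \(b\), a subgradient \(p\) at \(x\) is constrained only by \(\langle p,x-b\rangle\ge w(x)\ge0\), and every \(p\) with \(\langle p,x-b\rangle>0\) occurs (take \(w=\max\{0,\langle p,\cdot-b\rangle-c\}\) with \(0\le c<\langle p,x-b\rangle\)). Hence convexity forces a sign on \(\langle\nabla w,V\rangle\) only if \(V(x)\) is parallel to \(x-b\). A field adapted to a normal collar does not have this property.

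The missing idea is radial transport toward the right point. Take \(b\) to be the barycenter of \(d\sigma\); it lies in \(\Delta^\circ\) because each \(l_m\) has positive \(d\sigma\)-average. Set \(\beta_s=(T_s)_*d\sigma\) with \(T_s(x)=b+s(x-b)\) and \(0<s<1\). Because \(b\) is the barycenter, \(\beta_s\) has exactly the affine moments of \(d\sigma\). Convexity gives \(w(T_sx)\le s\,w(x)\), hence \(\mathcal B_\sigma(w)-\int_\Delta w\,d\beta_s\ge(1-s)\mathcal B_\sigma(w)\), a genuine margin.

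Since \(\operatorname{supp}\beta_s\Subset\Delta^\circ\), the interior bound \(\operatorname{Lip}_K(w)\le C_K\mathcal B_\sigma(w)\), which degenerates at \(\partial\Delta\), lets you mollify at cost \(O(r)\mathcal B_\sigma(w)\). A radially spread density reaching \(\partial\Delta\) would in any case jump across the cones over the ridges unless \(l_m(b)\) is independent of \(m\). Strict positivity on \(\overline\Delta\) then comes from adding a small constant \(t\) and restoring the affine moments with \(O(t)\) bumps supported where the mollified density is positive, controlled by \(\int_\Delta w\,d\mu\le C_1\mathcal B_\sigma(w)\). Finally, subtracting a supporting affine function at \(b\) reduces relative \(\widehat K\)-polystability for arbitrary convex \(u\) to this normalized inequality. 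Without this, or an equivalent proof of the uniform margin, your argument does not produce the positive stable \(A\).
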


\subsection{Criteria for positive scalar curvature K\"ahler metrics}

We define the PSC K\"ahler cone and the total PSC K\"ahler cone by
\[
\mathcal K_X^{\mathrm{psc}}
:=
\left\{
\alpha\in\mathcal K_X:
\alpha\text{ contains a PSC K\"ahler metric}
\right\}
\]
and
\[
\mathcal T_X^+
:=
\left\{
\alpha\in\mathcal K_X:
c_1(X)\cdot \alpha^{n-1}>0
\right\}.
\]
Both are open cones, and the total scalar curvature identity gives the
immediate inclusion
\[
\mathcal K_X^{\mathrm{psc}}
\subseteq
\mathcal T_X^+ \subseteq \mathcal{K}_X.
\]

Theorem~\ref{thm:variational-main} gives an analytic characterization of classes in \(\mathcal K_X^{\mathrm{psc}}\). It is therefore natural to ask whether positive total scalar curvature is sufficient for the existence of a PSC K\"ahler metric in a fixed class.

The answer is negative in general: on non-rational ruled surfaces \(\mathbb{P}(E) \to \Sigma_g\) with \(g \ge2\) and \(E\) unstable, there exist K\"ahler classes with positive total scalar curvature that do not contain any positive scalar curvature K\"ahler metrics (see \cite{Sha26obstruction}). Therefore, it is natural to seek the geometric setting in which the necessary numerical condition is also sufficient.

\begin{question} \label{q:class-level-PSC} 
For which compact K\"ahler manifolds \(X\) does one have 
\(\mathcal K_X^{\mathrm{psc}} = \mathcal T_X^+ \)?
\end{question}

For a Fano manifold, the equality in Question~\ref{q:class-level-PSC} holds by Yau's theorem \cite{Yau1978}. A general geometric characterization of the manifolds satisfying \(\mathcal K_X^{\mathrm{psc}}=\mathcal T_X^+\) remains open.
\begin{maincorollary}
\label{cor:toric-PSC-conjectures}
Let \(X\) be a compact smooth toric K\"ahler manifold of positive complex dimension. Then
\[
\mathcal K_X^{\mathrm{psc}}
=
\mathcal T_X^+
=
\mathcal K_X.
\]
In particular, Questions~\ref{q:class-level-PSC} have an affirmative answer for \(X\).
\end{maincorollary}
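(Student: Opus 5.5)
The statement to prove is Corollary~\ref{cor:toric-PSC-conjectures}. The plan is to derive it directly from Theorem~\ref{thm:toric-main} together with the chain of inclusions
\[
\mathcal K_X^{\mathrm{psc}}\subseteq\mathcal T_X^+\subseteq\mathcal K_X
\]
recorded just before Question~\ref{q:class-level-PSC}. No new analysis is needed. The work lies in verifying the two inclusions carefully and then closing the chain by showing that the largest set sits inside the smallest.

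\textbf{Step 1 (first inclusion).} If \(\alpha\) contains a PSC K\"ahler metric \(\omega_\phi\), I integrate its scalar curvature. The standard identity \(\Scal(\omega)\,\omega^n = 2n\,\Ric(\omega)\wedge\omega^{n-1}\) (up to the paper's normalization), together with \([\Ric(\omega_\phi)]=2\pi c_1(X)\), gives
\[
0<\int_X \Scal(\omega_\phi)\,\omega_\phi^n = 2n\pi\, c_1(X)\cdot\alpha^{n-1}=S_\alpha .
\]
Hence \(\alpha\in\mathcal T_X^+\).

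\textbf{Step 2 (second inclusion).} The inclusion \(\mathcal T_X^+\subseteq\mathcal K_X\) holds by definition.

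\textbf{Step 3 (closing the chain).} Let \(\alpha\in\mathcal K_X\) be arbitrary. Since \(X\) is a compact smooth toric K\"ahler manifold with \(n\ge1\), Theorem~\ref{thm:toric-main} gives a torus-invariant K\"ahler metric of positive scalar curvature in \(\alpha\). So \(\alpha\in\mathcal K_X^{\mathrm{psc}}\), i.e. \(\mathcal K_X\subseteq\mathcal K_X^{\mathrm{psc}}\). Combined with Steps 1 and 2, all three sets coincide. In particular \(\mathcal K_X^{\mathrm{psc}}=\mathcal T_X^+\), which answers Question~\ref{q:class-level-PSC} affirmatively for \(X\).

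As a byproduct, every K\"ahler class on a positive-dimensional toric manifold has positive total scalar curvature. This makes \(\mathcal V_\alpha^+\) nonempty, so the second half of Theorem~\ref{thm:toric-main} is not vacuous.

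\textbf{Main obstacle.} There is essentially none at this level, since the substance is Theorem~\ref{thm:toric-main} itself: the construction of a positive relatively \(\widehat K\)-polystable function \(A\) on the Delzant polytope, fed into Li--Lian--Sheng. The only point needing care is the normalization constant in the total scalar curvature identity. Only its positivity matters, so the constant does not affect the conclusion.
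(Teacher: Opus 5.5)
Your proposal is correct and follows the paper's argument: Theorem~\ref{thm:toric-main} gives \(\mathcal K_X\subseteq\mathcal K_X^{\mathrm{psc}}\), and combining this with the inclusions \(\mathcal K_X^{\mathrm{psc}}\subseteq\mathcal T_X^+\subseteq\mathcal K_X\) makes all three sets equal. Your constant in Step~1 is slightly off (the paper's convention \(\Scal(\omega)\,\omega^n=n\,\Ric(\omega)\wedge\omega^{n-1}\) gives exactly \(\int_X\Scal(\omega_\phi)\,\omega_\phi^n=S_\alpha\)), but as you note, only positivity matters.
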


\subsection*{On the use of AI}

All mathematical ideas, arguments, and results presented in this manuscript were developed by the author. ChatGPT 5.5 Plus and 5.6 Sol were utilized as an assisting tool to refine the writing and presentation, and to help verify the mathematical logic.

\subsection*{Organization of the paper}

The paper is organized as follows.
Section~\ref{sec:variational} introduces the \(\mathcal F\)-functional and develops its variational theory, including geodesic convexity, uniqueness of the minimizer, \(d_1\)-coercivity, independence of the reference data, and the modified continuity path.
Section~\ref{sec:closedness} establishes the a priori estimates and closedness required for this path.
Section~\ref{sec:geodesic-stability} relates \(d_1\)-coercivity to uniform geodesic stability and completes the proofs of Theorem~\ref{thm:variational-main} and Corollary~\ref{cor:global-parametrization-psc}.
Finally, Section~\ref{sec:toric-PSC} gives the proof of Theorem \ref{thm:toric-main} and Corollary \ref{cor:toric-PSC-conjectures}.

\subsection*{Acknowledgement} The initial idea of this paper was suggested by Prof. Song Sun during the ``International Conference on Complex and Differential Geometry'' at ICTP, Trieste. This suggestion was the starting point of the present work. The author is sincerely grateful to Prof. Xiuxiong Chen for his continued encouragement and support. The author also thanks Longteng Chen, Song Sun, Jian Wang, Mingchen Xia,
and Qi Yao for helpful discussions.

\section{The variational approach}
\label{sec:variational}

Let \(X\) be a compact K\"ahler manifold of complex dimension \(n\), and let \(\alpha\in\mathcal K_X\) be a K\"ahler class. For the scalar curvature, we use the convention
\[
\Scal(\omega)\,\omega^n
=
n\,\Ric(\omega)\wedge\omega^{n-1}.
\]
We write
\[
V_\alpha:=\alpha^n,\qquad
S_\alpha:=2\pi n\,c_1(X)\cdot\alpha^{n-1}
\]
for the volume and total scalar curvature of \(\alpha\), respectively, and set
\[
\overline S_\alpha:=\frac{S_\alpha}{V_\alpha}.
\]
We assume, throughout this paper, that
\(S_\alpha>0\). Fixing a reference
K\"ahler metric \( \omega\in\alpha\), let
\[
        \Hpot_\omega
        =
        \left\{
        \phi\in C^\infty(X,\mathbb R):
        \omega_\phi:=\omega+\ddbar\phi>0
        \right\}
\]
be the space of smooth K\"ahler potentials.
Throughout this section, if
\(\mathcal G:\mathcal H_\omega\to\mathbb R\) is a smooth functional,
we denote its first and second Fréchet differentials at
\(\phi\in\mathcal H_\omega\) by
\[
        D\mathcal G(\phi)[u]
        :=
        \left.
        \frac{d}{dt}
        \right|_{t=0}
        \mathcal G(\phi+t\,u)
\]
and
\[
        D^2\mathcal G(\phi)[u,v]
        :=
        \left.
        \frac{\partial^2}{\partial s\,\partial t}
        \right|_{s=t=0}
        \mathcal G(\phi+s\,u+t\,v),
\]
respectively, where \(u,v\in C^\infty(X,\mathbb R)\).

For a general smooth path \(t\mapsto\phi_t\in\mathcal H_\omega\), we use
\[
        \frac{d}{dt}\mathcal G(\phi_t)
        \qquad\text{and}\qquad
        \frac{d^2}{dt^2}\mathcal G(\phi_t)
\]
for the first and second variations of \(\mathcal G\) along the path. The chain rule gives
\[
        \frac{d^2}{dt^2}\mathcal G(\phi_t)
        =
        D^2\mathcal G(\phi_t)
        [\dot\phi_t,\dot\phi_t]
        +
        D\mathcal G(\phi_t)[\ddot\phi_t].
\]

\subsection{The prescribed scalar curvature measure equation}
\label{subsec:scalar-measure-equation}
As in Section \ref{sec:introduction}, define the space of admissible measures by
\begin{equation*}
       \vap:= \left\{
        \Omega\in
        C^\infty
        \bigl(
        X,\Lambda_{\mathbb R}^{n,n}
        \bigr):
        \Omega>0,\quad
        \int_X\Omega=S_\alpha
        \right\}.
\end{equation*}
For \(\Omega\in\mathcal V_\alpha^+\), we study the prescribed scalar curvature measure equation
\begin{equation}
        \Scal(\omega_\phi)\,\omega_\phi^n=\Omega.
        \label{eq:scalar-measure}
\end{equation}
The compatibility condition \(\Omega\in\mathcal V_\alpha^+\) is necessary, since
\[
        \int_X\Scal(\omega_\phi)\,\omega_\phi^n=S_\alpha.
\]

Let \(E_\omega\) denote the Aubin--Yau energy, also known as the Aubin--Mabuchi energy, normalized by \(E_\omega(0)=0\) and characterized by
\begin{equation}\label{eq:AY-variation}
        DE_\omega(\phi)[u]
        =
        \int_X u\,\omega_\phi^n.
\end{equation}
Equivalently,
\begin{equation}
        E_\omega(\phi)
        =
        \frac{1}{n+1}
        \sum_{j=0}^n
        \int_X
        \phi\,
        \omega_\phi^j\wedge\omega^{n-j}.
        \label{eq:AY}
\end{equation}

Let \(\mathcal M_\omega\) denote the Mabuchi \(K\)-energy. It is characterized up to an additive constant by the differential formula
\begin{equation*}
        D\mathcal M_\omega(\phi)[u]
        =
        -\int_X
        u\,
        \bigl(
        \Scal(\omega_\phi)-\overline S_\alpha
        \bigr)
        \,\omega_\phi^n.
\end{equation*}
We fix its additive normalization by requiring \(\mathcal M_\omega(0)=0\).

The prescribed measure \(\Omega\) determines the linear functional
\begin{equation*}
        L_{\omega,\Omega}(\phi)
        :=
        \int_X \phi\,\Omega.
\end{equation*}
We call \(L_{\omega,\Omega}\) the target linear functional.

Recall that the top-degree \(\mathcal J\)-functional of Chen \cite[Section~6]{Chen2018NewPerspective} associated with \(\Omega\), normalized to vanish at the origin, is
\begin{equation}
        \mathcal J_{\omega,\Omega}(\phi)
        =
        -\overline S_\alpha E_\omega(\phi)
        +
        L_{\omega,\Omega}(\phi).
        \label{eq:top-degree-J}
\end{equation}

We now introduce the prescribed scalar curvature measure functional:
\begin{equation}
        \mathcal F_{\omega,\Omega}(\phi)
        :=
        \mathcal M_\omega(\phi)
        -
        \overline S_\alpha E_\omega(\phi)
        +
        L_{\omega,\Omega}(\phi).
        \label{eq:prescribed-functional}
\end{equation}
Equivalently,
\[
        \mathcal F_{\omega,\Omega}
        =
        \mathcal M_\omega
        +
        \mathcal J_{\omega,\Omega}.
\]

We first see that \(\mathcal F_{\omega,\Omega}\) is invariant under addition of constants.

\begin{lemma}
\label{lem:constant-invariance}
For every \(\phi\in\Hpot_\omega\) and \(c\in\mathbb R\), we have \(\mathcal F_{\omega,\Omega}(\phi+c)=
\mathcal F_{\omega,\Omega}(\phi)\).
\end{lemma}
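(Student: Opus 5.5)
The plan is to differentiate along the straight path \(t\mapsto\phi+tc\), \(t\in[0,1]\), which stays in \(\Hpot_\omega\) because \(\omega_{\phi+tc}=\omega_\phi\). Set \(g(t):=\mathcal F_{\omega,\Omega}(\phi+tc)\). It then suffices to show \(g'(t)=0\) for every \(t\), and to compute this derivative one term at a time from the variational formulas recorded above, with \(u\equiv c\).

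\emph{Mabuchi term.} The differential formula gives
\[
D\mathcal M_\omega(\phi+tc)[c]=-c\int_X\bigl(\Scal(\omega_\phi)-\overline S_\alpha\bigr)\omega_\phi^n .
\]
This vanishes because \(\int_X\Scal(\omega_\phi)\,\omega_\phi^n=S_\alpha=\overline S_\alpha V_\alpha\), which is the total scalar curvature identity stated just before the definition of \(\vap\).

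\emph{Energy term.} By \eqref{eq:AY-variation},
\[
DE_\omega(\phi+tc)[c]=c\int_X\omega_\phi^n=cV_\alpha .
\]
Hence the term \(-\overline S_\alpha E_\omega\) contributes \(-c\,\overline S_\alpha V_\alpha=-cS_\alpha\).

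\emph{Linear term.} Since \(L_{\omega,\Omega}\) is linear, it contributes \(c\int_X\Omega=cS_\alpha\), using \(\Omega\in\vap\).

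Adding the three contributions gives \(g'(t)=0-cS_\alpha+cS_\alpha=0\), so \(g(1)=g(0)\), which is the claim. Equivalently, one can use the cocycle property \(E_\omega(\phi+c)=E_\omega(\phi)+cV_\alpha\), read off from \eqref{eq:AY}, together with the constant-invariance of \(\mathcal M_\omega\). In this form the statement reduces to the observation that \(\mathcal J_{\omega,\Omega}\) from \eqref{eq:top-degree-J} is invariant under constants precisely because \(\int_X\Omega=\overline S_\alpha V_\alpha\).

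There is no real obstacle here. The only point needing care is that \(\mathcal M_\omega\) is defined only through its differential, so its invariance must come from integrating along a path inside \(\Hpot_\omega\). The straight path \(\phi+tc\) is such a path. Along it the only fact needed is the cohomological identity \(\int_X\Scal(\omega_\phi)\,\omega_\phi^n=S_\alpha\), which is independent of \(\phi\). The normalization \(\int_X\Omega=S_\alpha\) in the definition of \(\vap\) is exactly what cancels the energy contribution.
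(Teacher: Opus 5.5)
Your proof is correct and is essentially the paper's argument. The paper uses the explicit shifts \(E_\omega(\phi+c)=E_\omega(\phi)+cV_\alpha\) and \(L_{\omega,\Omega}(\phi+c)=L_{\omega,\Omega}(\phi)+cS_\alpha\), cancels them via \(S_\alpha=\overline S_\alpha V_\alpha\), and quotes the constant-invariance of \(\mathcal M_\omega\); this is exactly your alternative formulation. Your derivative computation along \(\phi+tc\) is the infinitesimal version of the same cancellation, and it has the small advantage of deriving the invariance of \(\mathcal M_\omega\) from the total scalar curvature identity rather than citing it.
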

\begin{proof}
By \eqref{eq:AY}, \(E_\omega(\phi+c) = E_\omega(\phi)+cV_\alpha\). On the other hand,
\(L_{\omega,\Omega}(\phi+c) =L_{\omega,\Omega}(\phi)+cS_\alpha\). Since \(S_\alpha=\overline S_\alpha V_\alpha\), the two constant terms cancel in \eqref{eq:prescribed-functional}. Then the conclusion follows from the fact that the Mabuchi \(K\)-energy is invariant under addition of constants.
\end{proof}

We then compute the Euler--Lagrange equation of \(\mathcal F_{\omega,\Omega}\).
\begin{proposition}
\label{prop:EL}
Let \(\Omega \in \vap\). A potential \(\phi\in\Hpot_\omega\) is a critical point of \(\mathcal F_{\omega,\Omega}\) if and only if \(\phi\) solves \eqref{eq:scalar-measure}. In particular, every critical point determines a PSC K\"ahler metric.
\end{proposition}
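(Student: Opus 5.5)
The plan is to compute the first variation of \(\mathcal F_{\omega,\Omega}\) term by term, using the three differential formulas recorded above, and then read off the critical-point condition.

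For \(\phi\in\Hpot_\omega\) and \(u\in C^\infty(X,\mathbb R)\), the defining formula for the Mabuchi \(K\)-energy, the variation \eqref{eq:AY-variation} of \(E_\omega\), and the linearity of \(L_{\omega,\Omega}\) give
\[
D\mathcal F_{\omega,\Omega}(\phi)[u]
=
-\int_X u\,\bigl(\Scal(\omega_\phi)-\overline S_\alpha\bigr)\,\omega_\phi^n
-\overline S_\alpha\int_X u\,\omega_\phi^n
+\int_X u\,\Omega .
\]
The two \(\overline S_\alpha\) terms cancel. Hence
\[
D\mathcal F_{\omega,\Omega}(\phi)[u]
=
\int_X u\,\bigl(\Omega-\Scal(\omega_\phi)\,\omega_\phi^n\bigr).
\]
This is an exact identity, so no normalization of \(u\) is needed.

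A critical point means that \(D\mathcal F_{\omega,\Omega}(\phi)[u]=0\) for all smooth \(u\). Since \(\Omega-\Scal(\omega_\phi)\omega_\phi^n\) is a smooth top-degree form, this holds if and only if the form vanishes identically. That is exactly \eqref{eq:scalar-measure}.

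One might prefer to test only against \(u\) with \(\int_X u\,\omega_\phi^n=0\), since the functional is invariant under adding constants (Lemma~\ref{lem:constant-invariance}). This gives the same conclusion. Testing against all \(u\) includes constants, and for \(u=1\) the pairing is
\[
\int_X \Omega-\int_X\Scal(\omega_\phi)\,\omega_\phi^n = S_\alpha-S_\alpha = 0 ,
\]
because \(\Omega\in\vap\) and the total scalar curvature is cohomological. So the constant direction imposes no extra condition, and the two notions of criticality agree.

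Finally, if \(\phi\) solves \eqref{eq:scalar-measure}, then \(\Scal(\omega_\phi)\,\omega_\phi^n=\Omega>0\) pointwise, and \(\omega_\phi^n>0\). Hence \(\Scal(\omega_\phi)>0\), so every critical point gives a PSC K\"ahler metric.

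No step is a genuine obstacle. The only point needing care is the sign convention in \(D\mathcal M_\omega\), which has been fixed above so that the \(\overline S_\alpha\) terms cancel.
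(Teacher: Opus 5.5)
Your proposal is correct and follows essentially the same route as the paper: compute $D\mathcal F_{\omega,\Omega}(\phi)[u]$ term by term, cancel the $\overline S_\alpha$ terms to get $-\int_X u\,\bigl(\Scal(\omega_\phi)\,\omega_\phi^n-\Omega\bigr)$, and conclude by arbitrariness of $u$ and positivity of $\Omega$. Your extra remark that the constant direction $u=1$ imposes no condition (since both $\Omega$ and $\Scal(\omega_\phi)\,\omega_\phi^n$ have total mass $S_\alpha$) is correct. It is not needed for the argument.
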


\begin{proof}
For every \(u\in C^\infty(X,\mathbb R)\), the differential of \(\mathcal F_{\omega,\Omega}\) at \(\phi\) is
\begin{align}
        D\mathcal F_{\omega,\Omega}(\phi)[u]
        &=
        -\int_X
        u\,
        \bigl(
        \Scal(\omega_\phi)-\overline S_\alpha
        \bigr)
        \,\omega_\phi^n                                      
        -
        \overline S_\alpha
        \int_X
        u\,\omega_\phi^n
        +
        \int_X
        u\,\Omega                                             \\
        &=
        -\int_X
        u\,
        \bigl(
        \Scal(\omega_\phi)\,\omega_\phi^n-\Omega
        \bigr).
        \label{eq:F-differential}
\end{align}
Since \(u\) is arbitrary, the differential vanishes for every
\(u\in C^\infty(X,\mathbb R)\) if and only if
\[
        \Scal(\omega_\phi)\,\omega_\phi^n-\Omega
        =
        0.
\]
This is precisely \eqref{eq:scalar-measure}. Since both \(\Omega\) and \(\omega_\phi^n\) are smooth positive volume forms, the equation implies
\[
        \Scal(\omega_\phi)
        =
        \frac{\Omega}{\omega_\phi^n}
        >
        0,
\]
which completes the proof.
\end{proof}

The use of a fixed measure in \eqref{eq:scalar-measure} is not merely a reformulation of the classical prescribed scalar curvature problem in Riemannian geometry. 
More precisely, fix a smooth reference volume form \(dV_0\) and write \(\Omega=f\,dV_0\). The direct analogue of the usual prescribed scalar curvature problem in a fixed K\"ahler class asks for 
\begin{equation}\label{eq:usual-pre-sc}
    \Scal(\omega_\phi)=f
\end{equation}
as considered in \cite{ChenCheng2021I}, whereas the equation considered here is 
\[ 
\Scal(\omega_\phi)\,\omega_\phi^n=f\,dV_0. 
\] 
Integrating \eqref{eq:usual-pre-sc} with respect to \(\omega_\phi^n\) gives 
\[ 
\int_X f\,\omega_\phi^n=S_\alpha, 
\] 
which still involves the unknown function \(\phi\). 
For a fixed function \(h\), consider the one-form on \(\Hpot_\omega\) defined by 
\[ 
(\beta_h)_\phi(u) := \int_X u\,h\,\omega_\phi^n. 
\]
Its exterior derivative is 
\[ 
(d\beta_h)_\phi(u,v) = n\int_X h\, \bigl( v\,\ddbar u - u\,\ddbar v \bigr) \wedge\omega_\phi^{n-1}. 
\] 
If \(\beta_h\) is closed, then taking \(v=1\) and integrating by parts gives 
\[ 
0 = n\int_X u\,\ddbar h\wedge\omega_\phi^{n-1} 
\] 
for every \(u\in C^\infty(X,\mathbb R)\). 
It follows that 
\[ 
n\ddbar h\wedge\omega_\phi^{n-1}=(\Delta_{\omega_\phi}h)\,\omega_\phi^{n-1} = 0. 
\] 
Since \(X\) is compact, \(h\) must be constant. 

For a fixed smooth measure \(\Omega\), however, the one-form 
\[ 
u \longmapsto \int_X u\,\Omega 
\] 
is independent of the base point \(\phi\) and is exact, 
with primitive 
\[ 
L_{\omega,\Omega}(\phi) = \int_X \phi\,\Omega. 
\] 
Thus prescribing the scalar curvature measure is the natural direct variational prescription 
compatible with the Mabuchi \(K\)-energy in a fixed K\"ahler class.

We emphasize that this does not rule out other auxiliary variational formulations of \eqref{eq:usual-pre-sc}. In particular, it only shows that the direct one-form obtained from the Mabuchi \(K\)-energy by replacing \(\overline S_\alpha\) with a nonconstant function is not closed, and hence does not define a functional on \(\Hpot_\omega\).


\subsection{Relation to higher-degree twisted \(K\)-energy}
\label{subsec:higher-degree-twisted-K-energy}

We now relate the functional \(\mathcal F_{\omega,\Omega}\) to the higher-degree twisted \(K\)-energies introduced by Chen \cite[Section~6]{Chen2018NewPerspective}. Let \(1\leq k\leq n\), and let \(\mu_k\) be a smooth closed real \((k,k)\)-form.  The corresponding higher-degree \(J\)-functional is characterized, up to an additive constant, by
\begin{equation*}
DJ_{\mu_k}(\phi)[u]
=
\int_X
u\,
\left(
\mu_k\wedge\omega_\phi^{n-k}
-
c_k\,\omega_\phi^n
\right),
\qquad
c_k
=
\frac{
[\mu_k]\cdot\alpha^{n-k}
}{
V_\alpha
}.
\label{eq:higher-degree-J-differential}
\end{equation*}
The closedness of \(\mu_k\) ensures that this one-form is closed.

In the top-degree case \(k=n\), take \(\mu_n=\Omega\in\mathcal V_\alpha^+\).  Then
\[
c_n
=
\frac{\int_X\Omega}{V_\alpha}
=
\overline S_\alpha.
\]
After normalizing both sides to vanish at the origin, the corresponding top-degree \(J\)-functional is
\[
J_{\mu_n}
=
\mathcal J_{\omega,\Omega}
=
-\overline S_\alpha E_\omega
+
L_{\omega,\Omega}.
\]
Hence
\[
\mathcal F_{\omega,\Omega}
=
\mathcal M_\omega+\mathcal J_{\omega,\Omega}
=
2\left(
\frac12\,\mathcal M_\omega
+
\frac12\,J_{\mu_n}
\right).
\]
Thus \(\mathcal F_{\omega,\Omega}\) is, up to multiplication by a positive constant, the top-degree twisted \(K\)-energy at the midpoint of the path \(t\mathcal M_\omega+(1-t)J_{\mu_n}\).

The Euler--Lagrange equation for
\[
t\,\mathcal M_\omega+(1-t)J_{\mu_k},
\qquad 0\leq t\leq1,
\]
is
\[
t\,
\bigl(
\Scal(\omega_\phi)-\overline S_\alpha
\bigr)
=
(1-t)
\left(
\frac{
\mu_k\wedge\omega_\phi^{n-k}
}{
\omega_\phi^n
}
-
c_k
\right).
\]
Setting \(k=n\) and \(\mu_n=\Omega\), we obtain
\begin{equation}
t\,
\bigl(
\Scal(\omega_\phi)-\overline S_\alpha
\bigr)
=
(1-t)
\left(
\frac{\Omega}{\omega_\phi^n}
-
\overline S_\alpha
\right).
\label{eq:top-degree-twisted-path}
\end{equation}
At \(t=\frac12\), this is precisely the prescribed scalar curvature measure equation \eqref{eq:scalar-measure}.

Equivalently, setting \(s=\frac{t}{1-t}\) for \(0\leq t<1\), we may rewrite \eqref{eq:top-degree-twisted-path} as
\begin{equation}
\left(
s\,\Scal(\omega_\phi)
+
(1-s)\,\overline S_\alpha
\right)
\omega_\phi^n
=
\Omega.
\label{eq:auxiliary-scaling-equation-2}
\end{equation}
The prescribed scalar curvature measure equation corresponds to \(s=1\).

For every \(s>0\), the linearization of
\eqref{eq:auxiliary-scaling-equation-2} in the potential variable has the same fourth-order elliptic principal part as the scalar curvature operator, multiplied by \(s\).  At \(s=0\), this fourth-order part vanishes and the equation reduces to the complex Monge--Amp\`ere equation
\[
\overline S_\alpha\,\omega_\phi^n=\Omega.
\]
Although this initial equation is solvable by Yau's theorem, the usual implicit-function theorem does not apply at this endpoint.

This loss of order is analogous to the endpoint issue in Chen's \(k=1\) twisted path. In that setting, endpoint openness was proved independently by Zeng \cite{Zeng2019} and Hashimoto \cite{Hashimoto2019}. These results do not directly apply to the path \eqref{eq:auxiliary-scaling-equation-2}.

In Section~\ref{subsec:continuity-path}, we introduce a
modified path for which the coefficient of the Mabuchi \(K\)-energy remains uniformly positive, while the prescribed top-degree form is allowed to vary. Along that path the linearization remains fourth-order elliptic, including at the initial parameter.


\subsection{Geodesic convexity and uniqueness}
\label{subsec:geodesic-convexity-uniqueness}
Recall that a smooth geodesic \(\phi_s\in \Hpot_\omega\) satisfies
\begin{equation*}
        \ddot\phi_s
        -
        |\partial\dot\phi_s|^2_{\omega_{\phi_s}}
        =
        0.
\end{equation*}
It is known that smooth geodesics connecting arbitrary pairs of points of
\(\Hpot_\omega\) are not available in general. Instead, one uses the
weak geodesic segment obtained from the homogeneous complex
Monge--Amp\`ere equation. The existence of weak geodesic segments
joining smooth K\"ahler potentials was established by Chen
\cite{Chen2000Space}, while their full real \(C^{1,1}\)-regularity was
proved by Chu--Tosatti--Weinkove \cite{CTW2017}.

\begin{proposition}
\label{prop:weak-convexity}
The functional \(\mathcal F_{\omega,\Omega}\) is convex along every weak
geodesic segment joining two smooth K\"ahler potentials. Moreover, if
\(s\longmapsto\mathcal F_{\omega,\Omega}(\phi_s)\) is affine along such a weak geodesic segment, then \(\phi_1-\phi_0\) is constant.
\end{proposition}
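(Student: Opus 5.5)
The plan is to split \(\mathcal F_{\omega,\Omega}=\mathcal M_\omega-\overline S_\alpha E_\omega+L_{\omega,\Omega}\) and treat each piece separately along a weak (\(C^{1,1}\)) geodesic \(\phi_s\), \(s\in[0,1\), joining \(\phi_0,\phi_1\in\Hpot_\omega\).

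\emph{The Aubin--Yau term.} By Chen's construction and the Chu--Tosatti--Weinkove regularity, \(s\mapsto E_\omega(\phi_s)\) is affine along weak geodesics. The reason is that \(E_\omega\) is the primitive of the Monge--Amp\`ere operator, and \(\Phi(s+\sqrt{-1}\tau,x)=\phi_s(x)\) satisfies \((\pi^*\omega+\ddbar_{X\times A}\Phi)^{n+1}=0\). So \(-\overline S_\alpha E_\omega\) contributes nothing to convexity.

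\emph{The target linear term.} Since \(\Phi\) is \(\pi^*\omega\)-psh on \(X\times A\) and \(S^1\)-invariant in \(\tau\), for each fixed \(x\) the function \(s\mapsto\phi_s(x)\) is convex in \(s\). Integrating against the positive measure \(\Omega\) shows that \(L_{\omega,\Omega}(\phi_s)=\int_X\phi_s\,\Omega\) is convex. This is the only place where positivity of \(\Omega\) enters.

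\emph{The Mabuchi term.} Convexity of \(\mathcal M_\omega\) along \(C^{1,1}\) weak geodesics is the theorem of Berman--Berndtsson and Chen--Li--P\u{a}un. Adding the three pieces gives convexity of \(\mathcal F_{\omega,\Omega}\).

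\emph{The rigidity statement.} Suppose \(s\mapsto\mathcal F_{\omega,\Omega}(\phi_s)\) is affine. Each summand is convex and \(E_\omega\) is affine, so \(\mathcal M_\omega(\phi_s)\) and \(L_{\omega,\Omega}(\phi_s)\) must each be affine. The second condition means \(\int_X\bigl(\phi_{s}-(1-s)\phi_0-s\phi_1\bigr)\Omega=0\) for all \(s\). Pointwise convexity in \(s\) gives \(\phi_s(x)\le(1-s)\phi_0(x)+s\phi_1(x)\). Because \(\Omega>0\) has full support and \(\phi_s\) is continuous, equality holds everywhere. Hence \(s\mapsto\phi_s(x)\) is affine for every \(x\), i.e. \(\phi_s=\phi_0+s(\phi_1-\phi_0)\), and \(\dot\phi_s\equiv u:=\phi_1-\phi_0\) is independent of \(s\).

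I then use the geodesic equation to force \(u\) to be constant. For a \(C^{1,1}\) geodesic this takes the form \((\ddot\phi_s-|\partial\dot\phi_s|^2_{\omega_{\phi_s}})\omega_{\phi_s}^n=0\) wherever \(\omega_{\phi_s}>0\), and more generally \((\pi^*\omega+\ddbar\Phi)^{n+1}=0\). With \(\ddot\phi_s=0\) and \(\Phi=\phi_0+su\), expanding the \((n+1)\)-st power in the \(s\)-direction at \(s=0\) gives
\[
n\,\sqrt{-1}\,\partial u\wedge\bar\partial u\wedge\omega_{\phi_0}^{n-1}=0.
\]
Since \(\omega_{\phi_0}\) is a genuine Kähler form, this yields \(|\partial u|^2_{\omega_{\phi_0}}=0\), so \(u\) is constant. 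Alternatively, affineness of \(\mathcal M_\omega\) together with the Berman--Berndtsson uniqueness argument would show that \(\phi_1\) is the pullback of \(\phi_0\) by a holomorphic flow. The \(L\)-term argument avoids needing any holomorphic vector field analysis.

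\emph{Main obstacle.} The delicate step is making the pointwise computation legitimate at the \(C^{1,1}\) level. I would handle it as follows. \(\Phi=\phi_0+su\) is smooth, so \(\Phi\) is in fact a smooth solution of the homogeneous complex Monge--Amp\`ere equation. The expansion of \((\pi^*\omega+\ddbar\Phi)^{n+1}\) can then be performed classically, with the \(ds\wedge d\tau\) component equal to \(\sqrt{-1}\,\partial u\wedge\bar\partial u\) in the mixed terms. This reduces the problem to the smooth computation above.
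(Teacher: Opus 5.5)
Your proposal is correct and follows essentially the paper's route. Convexity comes from the three-term splitting, where the paper instead cites Chen's top-degree twisted \(K\)-energy convexity. Rigidity comes from affineness of \(L_{\omega,\Omega}\) together with \(\Omega>0\), which forces \(\phi_s=(1-s)\phi_0+s\phi_1\); the complex Hessian of \(\Phi\) on \(X\times\Sigma\) then kills \(\partial(\phi_1-\phi_0)\). The only differences are cosmetic: you use the chord inequality plus continuity where the paper uses \(\ddot\phi_s\ge0\) a.e., and you use \(T^{n+1}=0\) for the now-smooth \(\Phi\) where the paper uses positive semidefiniteness of the block matrix with vanishing \(\zeta\bar\zeta\)-entry.
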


\begin{proof}
The convexity of \(\mathcal F_{\omega,\Omega}\) follows from
\cite[Theorem~6.2]{Chen2018NewPerspective}. Indeed, by Yau's theorem \cite{Yau1978} there exists \(\eta\in\alpha\) such that \(\Omega=\overline S_\alpha\eta^n\), and, up to multiplication by a positive constant, \(\mathcal F_{\omega,\Omega}\) is the corresponding top-degree twisted \(K\)-energy at \(t=\frac12\).

Suppose now that
\(s\mapsto \mathcal F_{\omega,\Omega}(\phi_s)\)
is affine.  Since both the Mabuchi \(K\)-energy and \(L_{\omega,\Omega}\) are convex, and since \(E_\omega\) is affine, the function \(s\longmapsto L_{\omega,\Omega}(\phi_s)\)
must itself be affine. By the real \(C^{1,1}\)-regularity of the weak geodesic \cite{CTW2017}, \(\ddot\phi_s\ge0\) almost everywhere on \(X\times(0,1)\), and
\[
        0
        =
        \frac{d^2}{ds^2}L_{\omega,\Omega}(\phi_s)
        =
        \int_X\ddot\phi_s\,\Omega
\]
for almost every \(s\in(0,1)\).  Since \(\Omega\) is strictly positive,
we obtain \(\ddot\phi_s=0\) almost everywhere on \(X\times(0,1)\).  Hence
\begin{equation} \label{eq:weak-geodesic-affine}
            \phi_s=(1-s)\phi_0+s\phi_1.
\end{equation}

We next use the homogeneous complex Monge-Amp\`ere formulation of weak geodesics \cite{Chen2000Space}. Let
\[
        \Sigma
        =
        \{\zeta=s+\sqrt{-1}\,r\in\mathbb C:0<s<1,\ r\in\mathbb R\}.
\]
The weak geodesic \(s\mapsto\phi_s\) is represented by the
\(\mathbb R\)-invariant function
\[
        \Phi(x,\zeta)
        =
        \phi_{\operatorname{Re}\zeta}(x)
        =
        \phi_s(x)
\]
on \(X\times\Sigma\). Note that \(\Phi\) is \(\pi_X^*\omega\)-plurisubharmonic and solves
\[
        \bigl(
        \pi_X^*\omega
        +
        \ddbar_{X\times\Sigma}\Phi
        \bigr)^{n+1}
        =
        0,
\]
with boundary values \(\phi_0\) and \(\phi_1\) at \(\operatorname{Re}\zeta=0\) and \(\operatorname{Re}\zeta=1\), respectively.  In particular,
\[
        T
        :=
        \pi_X^*\omega
        +
        \ddbar_{X\times\Sigma}\Phi
\]
is a positive \((1,1)\)-current. By the real \(C^{1,1}\)-regularity of Chu--Tosatti--Weinkove
\cite{CTW2017}, the coefficients of \(T\) are defined almost everywhere. In local product coordinates \((z_1,\ldots,z_n,\zeta)\) on \(X\times\Sigma\), with \(\zeta=s+\sqrt{-1}\,r\), the \(\mathbb R\)-invariance of \(\Phi\) gives
\[
        \Phi_{i\bar\zeta}
        =
        \frac12\,\partial_i\dot\phi_s,
        \qquad
        \Phi_{\zeta\bar\zeta}
        =
        \frac14\,\ddot\phi_s
\]
almost everywhere. Hence the positivity of \(T\) gives a positive semidefinite block matrix of the form
\[
        \begin{pmatrix}
        (\omega_{\phi_s})_{i\bar j}
        &
        \frac12\,\partial_i\dot\phi_s
        \\[4pt]
        \frac12\,\partial_{\bar j}\dot\phi_s
        &
        \frac14\,\ddot\phi_s
        \end{pmatrix}
\]
almost everywhere. Since \(\ddot\phi_s=0\) almost everywhere, the mixed entries of this positive semidefinite matrix vanish. Thus \(\partial\dot\phi_s=0\) almost everywhere. Using \eqref{eq:weak-geodesic-affine}, we obtain \(\partial(\phi_1-\phi_0)=0\). Consequently, \(\phi_1-\phi_0\) is constant.
\end{proof}

The positivity of \(\Omega\) is essential in the preceding argument.
Thus the convexity and uniqueness arguments below are specific to positive prescribed measures and do not formally extend to negative or sign-changing scalar curvature measures.

\begin{lemma}
\label{lem:critical-point-minimizer}
Let \(\phi_0\in\Hpot_\omega\) be a critical point of
\(\mathcal F_{\omega,\Omega}\). Then for every \(\psi\in\Hpot_\omega\),
\[
        \mathcal F_{\omega,\Omega}(\phi_0)
        \leq
        \mathcal F_{\omega,\Omega}(\psi).
\]
\end{lemma}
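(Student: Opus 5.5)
The plan is to combine the convexity of Proposition~\ref{prop:weak-convexity} along the weak geodesic from \(\phi_0\) to \(\psi\) with the vanishing of the first variation at the critical point \(\phi_0\). Fix \(\psi\in\Hpot_\omega\) and let \(\phi_s\), \(s\in[0,1]\), be the weak \(C^{1,1}\) geodesic with \(\phi_s|_{s=0}=\phi_0\) and \(\phi_s|_{s=1}=\psi\) (Chen, Chu--Tosatti--Weinkove). Set \(f(s):=\mathcal F_{\omega,\Omega}(\phi_s)\). By Proposition~\ref{prop:weak-convexity}, \(f\) is convex on \([0,1]\), so
\[
f(1)-f(0)\ \geq\ \lim_{s\to0^+}\frac{f(s)-f(0)}{s}.
\]
Thus it suffices to show that this right derivative at \(s=0\) is \(\geq 0\); in fact it should equal \(D\mathcal F_{\omega,\Omega}(\phi_0)[\dot\phi_0^+]=0\) by \eqref{eq:F-differential} and Proposition~\ref{prop:EL}.

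To justify this, I will split \(\mathcal F_{\omega,\Omega}=\mathcal M_\omega+\mathcal J_{\omega,\Omega}\) and treat the pieces separately. The energy and linear parts are easy. Since \(\phi_s\) is \(C^{1,1}\) in \((x,s)\), the map \(s\mapsto\phi_s\) is Lipschitz into \(C^0\), and \((\phi_s-\phi_0)/s\to\dot\phi_0^+\) uniformly, where \(\dot\phi_0^+\) is the bounded right derivative at the endpoint. Hence
\[
\frac{d}{ds}\Big|_{0^+}\bigl(-\overline S_\alpha E_\omega(\phi_s)+L_{\omega,\Omega}(\phi_s)\bigr)
=\int_X\dot\phi_0^+\bigl(\Omega-\overline S_\alpha\,\omega_{\phi_0}^n\bigr).
\]
This uses the explicit formula \eqref{eq:AY} and the continuity of Monge--Amp\`ere measures under uniform convergence.

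The Mabuchi term is the main obstacle, since \(\phi_s\) is only \(C^{1,1}\) and the entropy part of \(\mathcal M_\omega\) is not differentiable along general paths. Here I would invoke the known endpoint estimate of Berman--Berndtsson (as refined by Chen--Li--P\u{a}un): along a \(C^{1,1}\) weak geodesic emanating from a smooth potential \(\phi_0\),
\[
\liminf_{s\to0^+}\frac{\mathcal M_\omega(\phi_s)-\mathcal M_\omega(\phi_0)}{s}\ \geq\ -\int_X\dot\phi_0^+\bigl(\Scal(\omega_{\phi_0})-\overline S_\alpha\bigr)\omega_{\phi_0}^n.
\]
This inequality, rather than an equality, is all that is needed. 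If I prefer to avoid citing it, I would approximate instead: use Chen's \(\varepsilon\)-geodesics \(\phi^\varepsilon_s\), which are smooth. Along these, \(\frac{d^2}{ds^2}\mathcal M_\omega\geq -C\varepsilon\) by Chen's computation, and the second derivative of \(\mathcal J_{\omega,\Omega}\) is \(\geq -C\varepsilon\) as well. Integrating from \(0\) to \(1\) and using \(D\mathcal F_{\omega,\Omega}(\phi_0)=0\) gives \(f_\varepsilon(1)-f_\varepsilon(0)\geq -C\varepsilon\). Letting \(\varepsilon\to0\) and using the convergence of \(\mathcal F_{\omega,\Omega}\) at the fixed smooth endpoints then yields the result directly.

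Adding the pieces, the lower bound for the right derivative of \(f\) at \(0\) is
\[
-\int_X\dot\phi_0^+\bigl(\Scal(\omega_{\phi_0})\omega_{\phi_0}^n-\Omega\bigr)=0,
\]
because \(\phi_0\) solves \eqref{eq:scalar-measure} by Proposition~\ref{prop:EL}. By convexity, \(\mathcal F_{\omega,\Omega}(\psi)=f(1)\geq f(0)=\mathcal F_{\omega,\Omega}(\phi_0)\).
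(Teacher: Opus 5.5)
Your main argument is correct and is essentially the paper's proof. Both use convexity along the \(C^{1,1}\) geodesic from \(\phi_0\) to \(\psi\), the Berman--Berndtsson endpoint inequality for \(\mathcal M_\omega\), and a direct treatment of the \(E_\omega\) and \(L_{\omega,\Omega}\) slopes. The only difference is inessential: the paper bounds the \(L_{\omega,\Omega}\)-secant slope below for every \(s\) via the pointwise inequality \((\phi_s-\phi_0)/s\ge\dot\phi_0^+\), instead of passing to the limit.

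You should, however, drop the \(\varepsilon\)-geodesic fallback, because the claimed bound \(-C\varepsilon\) fails. Chen's computation only gives \(\frac{d^2}{ds^2}\mathcal M_\omega(\phi^\varepsilon_s)\ge-\varepsilon\int_X\bigl(\Scal(\omega_{\phi^\varepsilon_s})-\overline S_\alpha\bigr)\omega^n\). After discarding the favorable contribution of \(-\Delta_{\omega_{\phi^\varepsilon_s}}\log\bigl(\omega_{\phi^\varepsilon_s}^n/\omega^n\bigr)\), the remaining term \(-\varepsilon\int_X\operatorname{tr}_{\omega_{\phi^\varepsilon_s}}\Ric(\omega)\,\omega^n\) cannot in general be bounded below by \(-C\varepsilon\). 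This is because \(\varepsilon\)-geodesics carry no uniform lower bound on \(\omega_{\phi^\varepsilon_s}\). Moreover, the sign condition \(\Ric\le0\) that makes Chen's argument work would force \(S_\alpha\le0\) here. This is exactly the obstruction that the Berman--Berndtsson and Chen--Li--P\u{a}un results were needed to overcome.
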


\begin{proof}
Although the functional is convex along weak geodesics, the initial velocity of such a geodesic need not be smooth. The point is therefore to show that a smooth critical point prevents a downward jump of the right derivative at the initial endpoint. Fix \(\psi\in\Hpot_\omega\), and let
\([0,1]\ni s
        \mapsto
        \phi_s\)
be the weak geodesic segment joining \(\phi_0\) to \(\psi\). Write
\[
        \dot\phi_0^+
        :=
        \lim_{s\downarrow0}
        \frac{\phi_s-\phi_0}{s}.
\]
By the \(C^{1,1}\)-regularity of the weak geodesic, this limit exists
almost everywhere on \(X\) and defines a bounded function.

The endpoint slope inequality for the Mabuchi \(K\)-energy
\cite[Lemma~3.5]{BermanBerndtsson2017} gives
\begin{equation*}
        \lim_{s\downarrow0}
        \frac{
        \mathcal M_\omega(\phi_s)
        -
        \mathcal M_\omega(\phi_0)
        }{s}
        \geq
        -\int_X
        \dot\phi_0^+\,
        \bigl(
        \Scal(\omega_{\phi_0})
        -
        \overline S_\alpha
        \bigr)
        \,\omega_{\phi_0}^n.
\end{equation*}
Since
\( s\mapsto \mathcal M_\omega(\phi_s)\)
is convex, its secant slopes from the initial endpoint are
nondecreasing. Hence, for every \(s\in(0,1]\),
\begin{equation}
\begin{split}
        \frac{
        \mathcal M_\omega(\phi_s)
        -
        \mathcal M_\omega(\phi_0)
        }{s}
        \geq
        -\int_X
        \dot\phi_0^+\,
        \bigl(
        \Scal(\omega_{\phi_0})
        -
        \overline S_\alpha
        \bigr)
        \,\omega_{\phi_0}^n.
        \label{eq:Mabuchi-secant-slope}
\end{split}
\end{equation}
The Aubin--Yau energy is affine along the weak geodesic, and its
initial endpoint derivative is
\[
        \left.
        \frac{d}{ds}
        \right|_{s=0^+}
        E_\omega(\phi_s)
        =
        \int_X
        \dot\phi_0^+\,\omega_{\phi_0}^n.
\]
Consequently, for every \(s\in(0,1]\),
\begin{equation}
        \frac{
        E_\omega(\phi_s)
        -
        E_\omega(\phi_0)
        }{s}
        =
        \int_X
        \dot\phi_0^+\,\omega_{\phi_0}^n.
        \label{eq:E-endpoint-slope}
\end{equation}
Moreover, the pointwise convexity of \(s\mapsto\phi_s(x)\) gives
\[
        \frac{\phi_s-\phi_0}{s}
        \geq
        \dot\phi_0^+
\]
almost everywhere on \(X\). Since \(\Omega\) is positive, integration
yields
\begin{equation}
        \frac{
        L_{\omega,\Omega}(\phi_s)
        -
        L_{\omega,\Omega}(\phi_0)
        }{s}
        \geq
        \int_X
        \dot\phi_0^+\,\Omega.
        \label{eq:L-endpoint-slope}
\end{equation}
By Proposition~\ref{prop:EL}, \(\phi_0\) solves
\[
        \Omega
        =
        \Scal(\omega_{\phi_0})\,\omega_{\phi_0}^n.
\]
Combining
\eqref{eq:Mabuchi-secant-slope},
\eqref{eq:E-endpoint-slope}, and
\eqref{eq:L-endpoint-slope}, we obtain
\begin{align*}
        \frac{
        \mathcal F_{\omega,\Omega}(\phi_s)
        -
        \mathcal F_{\omega,\Omega}(\phi_0)
        }{s}
        \geq 0.
\end{align*}
Taking \(s=1\) gives
\(\mathcal F_{\omega,\Omega}(\psi)
        \geq
        \mathcal F_{\omega,\Omega}(\phi_0)\).
\end{proof}

\begin{corollary}
\label{cor:uniqueness}
For any \(\Omega \in \vap\), there exists at most one
K\"ahler metric in the class \(\alpha\) satisfying
\eqref{eq:scalar-measure}.
\end{corollary}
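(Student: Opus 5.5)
Suppose $\phi_0,\phi_1\in\Hpot_\omega$ both solve \eqref{eq:scalar-measure} for the same $\Omega\in\vap$. The plan is to show that $\phi_1-\phi_0$ is constant, so that $\omega_{\phi_0}=\omega_{\phi_1}$. The argument combines Lemma~\ref{lem:critical-point-minimizer} with the rigidity part of Proposition~\ref{prop:weak-convexity}.

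First, by Proposition~\ref{prop:EL}, both $\phi_0$ and $\phi_1$ are critical points of $\mathcal F_{\omega,\Omega}$. Applying Lemma~\ref{lem:critical-point-minimizer} once with $\phi_0$ as the critical point and once with $\phi_1$, we get $\mathcal F_{\omega,\Omega}(\phi_0)\le\mathcal F_{\omega,\Omega}(\phi_1)$ and the reverse inequality. Hence both potentials attain the common value $m:=\inf_{\Hpot_\omega}\mathcal F_{\omega,\Omega}$.

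Next, let $[0,1]\ni s\mapsto\phi_s$ be the weak geodesic segment joining $\phi_0$ to $\phi_1$. By Proposition~\ref{prop:weak-convexity}, the function $f(s):=\mathcal F_{\omega,\Omega}(\phi_s)$ is convex, so $f(s)\le m$ for every $s\in[0,1]$. We also need the reverse bound $f(s)\ge m$ at interior times. The intermediate points $\phi_s$ are only $C^{1,1}$, so Lemma~\ref{lem:critical-point-minimizer} does not literally apply to them; this is the one technical point of the proof.

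For the lower bound we avoid the intermediate points and use the argument of Lemma~\ref{lem:critical-point-minimizer} directly along this segment. That argument shows that every secant slope from the initial endpoint is nonnegative: $(f(s)-f(0))/s\ge0$ for all $s\in(0,1]$, so $f(s)\ge m$. Combined with the upper bound, $f\equiv m$ on $[0,1]$, and in particular $f$ is affine. The rigidity statement in Proposition~\ref{prop:weak-convexity} then gives that $\phi_1-\phi_0$ is constant, so $\omega_{\phi_1}=\omega_{\phi_0}$. Since two potentials define the same Kähler metric exactly when they differ by a constant, this proves uniqueness of the metric, and also uniqueness of the solution up to addition of constants.

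If one prefers to avoid the endpoint-slope argument, an alternative for the lower bound is to approximate $\phi_s$ by smooth potentials in $d_1$ and use lower semicontinuity of $\mathcal F_{\omega,\Omega}$. That route relies on the finite-energy theory developed later in the paper, so the direct argument above is the one to use here.
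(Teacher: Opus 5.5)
Your proof is correct and follows the paper's route: both solutions are global minimizers by Lemma~\ref{lem:critical-point-minimizer}, convexity along the weak geodesic forces \(s\mapsto\mathcal F_{\omega,\Omega}(\phi_s)\) to be constant, and the rigidity in Proposition~\ref{prop:weak-convexity} then gives that \(\phi_1-\phi_0\) is constant. Your use of the secant-slope bound \((f(s)-f(0))/s\ge0\) from the proof of that lemma is more careful than the paper, which simply writes \(\inf_{\Hpot_\omega}\mathcal F_{\omega,\Omega}\le\mathcal F_{\omega,\Omega}(\phi_s)\) even though \(\phi_s\) is only \(C^{1,1}\). One caution about your fallback remark: \(d_1\)-lower semicontinuity gives \(\mathcal F_{\omega,\Omega}(\phi_s)\le\liminf_j\mathcal F_{\omega,\Omega}(\phi^j)\), which is the wrong direction, so that route would instead need smooth approximants with \(\mathcal F_{\omega,\Omega}(\phi^j)\to\mathcal F_{\omega,\Omega}(\phi_s)\), e.g.\ the entropy approximation of \cite[Theorem~3.2]{BDL2017}.
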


\begin{proof}
Let \(\phi_0,\phi_1\in\Hpot_\omega\) be two solutions. By
Proposition~\ref{prop:EL}, both are critical points of
\(\mathcal F_{\omega,\Omega}\). Lemma
\ref{lem:critical-point-minimizer} therefore shows that both are global
minimizers. In particular,
\[
        \mathcal F_{\omega,\Omega}(\phi_0)
        =
        \mathcal F_{\omega,\Omega}(\phi_1)
        =
        \inf_{\Hpot_\omega}
        \mathcal F_{\omega,\Omega}.
\]
Let \([0,1]\ni s \mapsto \phi_s\)
be the weak geodesic segment joining \(\phi_0\) to \(\phi_1\).
By convexity,
\[
     \inf_{\Hpot_\omega}
        \mathcal F_{\omega,\Omega}\le   \mathcal F_{\omega,\Omega}(\phi_s)
        \leq
        (1-s)\,
        \mathcal F_{\omega,\Omega}(\phi_0)
        +
        s\,
        \mathcal F_{\omega,\Omega}(\phi_1)=\inf_{\Hpot_\omega}
        \mathcal F_{\omega,\Omega}.
\]
Hence \(s \mapsto \mathcal F_{\omega,\Omega}(\phi_s)\)
is constant. Proposition~\ref{prop:weak-convexity} then implies that \(\phi_1-\phi_0\) is constant.
\end{proof}


\subsection{Linearization operator and its kernel}
\label{subsec:linearization-nondegeneracy}

Define the scalar curvature measure map by
\[
        \mathscr S_\omega:
        \Hpot_\omega
        \longrightarrow
        C^\infty\bigl(X,\Lambda_{\mathbb R}^{n,n}\bigr),
        \qquad
        \mathscr S_\omega(\phi)
        :=
        \Scal(\omega_\phi)\,\omega_\phi^n .
\]
For \(\phi\in\Hpot_\omega\), denote the linearization of
\(\mathscr S_\omega\) at \(\phi\) by
\[
        \mathcal L_{\omega_\phi}u
        :=
        \left.
        \frac{d}{dt}
        \right|_{t=0}
        \mathscr S_\omega(\phi+t\,u).
\]
Since
\[
        \left.
        \frac{d}{dt}
        \right|_{t=0}
        \Ric(\omega_{\phi+t\,u})
        =
        -\ddbar\bigl(\Delta_{\omega_\phi}u\bigr),
\]
we obtain
\begin{equation*}
        \mathcal L_{\omega_\phi}u
        =
        -n\,
        \ddbar\bigl(\Delta_{\omega_\phi}u\bigr)
        \wedge\omega_\phi^{n-1}
        +
        n(n-1)\,
        \Ric(\omega_\phi)\wedge
        \ddbar u\wedge\omega_\phi^{n-2}.
\end{equation*}

We define the scalar operator \(P_{\omega_\phi}\) by
\[
        \mathcal L_{\omega_\phi}u
        =
        P_{\omega_\phi}u\,\omega_\phi^n .
\]
Then
\begin{equation}
        P_{\omega_\phi}u
        =
        -\Delta_{\omega_\phi}^2u
        -
        \left\langle
        \Ric(\omega_\phi),
        \ddbar u
        \right\rangle_{\omega_\phi}
        +
        \Scal(\omega_\phi)\,
        \Delta_{\omega_\phi}u.
        \label{eq:P-operator}
\end{equation}
In particular, \(P_{\omega_\phi}\) is a fourth-order elliptic operator with principal part
\(-\Delta_{\omega_\phi}^2\).

Since
\[
        \int_X
        \mathscr S_\omega(\phi)
        =
        S_\alpha
\]
is independent of \(\phi\), differentiation gives
\[
        \int_X
        P_{\omega_\phi}u\,\omega_\phi^n
        =
        0
\]
for every \(u\in C^\infty(X,\mathbb R)\). For \(k\ge 0\) and \(0<\gamma<1\), set
\[
        C^{k,\gamma}_0(X,\omega_\phi)
        :=
        \left\{
        u\in C^{k,\gamma}(X,\mathbb R):
        \int_X
        u\,\omega_\phi^n
        =
        0
        \right\}.
\]

\begin{proposition}
\label{prop:automatic-nondegeneracy}
Suppose that \(\phi\in\Hpot_\omega\) solves \eqref{eq:scalar-measure} for a fixed \(\Omega \in \vap\). Then
\[
        \ker P_{\omega_\phi}
        =
        \mathbb R.
\]
Moreover, the restriction
\[
        P_{\omega_\phi}:
        C^{4,\gamma}_0(X,\omega_\phi)
        \longrightarrow
        C^{0,\gamma}_0(X,\omega_\phi)
\]
is an isomorphism.
\end{proposition}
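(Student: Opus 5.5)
The plan is to read $P_{\omega_\phi}$ as the Hessian of $\mathcal F_{\omega,\Omega}$ at the critical point $\phi$, and to show that this Hessian is a sum of nonnegative terms, one of which controls $|\partial u|^2$ because $\Omega>0$.

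\emph{Step 1: $P_{\omega_\phi}$ as a Hessian.} Differentiating \eqref{eq:F-differential} in a second direction $v$ gives
\[
D^2\mathcal F_{\omega,\Omega}(\phi)[u,v]=-\int_X v\,P_{\omega_\phi}u\,\omega_\phi^n
+\int_X u\,\bigl(\mathscr S_\omega(\phi)-\Omega\bigr)\cdot(\text{terms linear in } v).
\]
Here $\Omega$ does not depend on $\phi$. At a solution of \eqref{eq:scalar-measure} the second term vanishes identically. So at our $\phi$,
\[
D^2\mathcal F_{\omega,\Omega}(\phi)[u,v]=-\int_X v\,P_{\omega_\phi}u\,\omega_\phi^n .
\]
Since mixed second derivatives of a smooth functional commute, $P_{\omega_\phi}$ is formally self-adjoint with respect to $\omega_\phi^n$.

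\emph{Step 2: a positivity formula for the Hessian.} To evaluate $-\int_X u\,P_{\omega_\phi}u\,\omega_\phi^n$, consider the path
\[
\phi_t=\phi+tu+\tfrac{t^2}{2}|\partial u|^2_{\omega_\phi}.
\]
It lies in $\Hpot_\omega$ for small $t$ and satisfies the geodesic equation at $t=0$. By the chain rule recalled at the start of the section,
\[
\frac{d^2}{dt^2}\Big|_{t=0}\mathcal F_{\omega,\Omega}(\phi_t)=D^2\mathcal F_{\omega,\Omega}(\phi)[u,u]+D\mathcal F_{\omega,\Omega}(\phi)[\ddot\phi_0],
\]
and the last term vanishes because $\phi$ is critical. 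We then compute the left-hand side term by term, using the standard second-variation formulas at $t=0$:
\begin{itemize}
\item For the Mabuchi $K$-energy, $\frac{d^2}{dt^2}\mathcal M_\omega=\int_X|\mathcal D u|^2\omega_\phi^n-\int_X(\ddot\phi-|\partial\dot\phi|^2)(\Scal-\overline S_\alpha)\,\omega_\phi^n$. Here $\mathcal D u=\bar\partial\nabla^{1,0}u$ is the Lichnerowicz operator. The second integral vanishes on our path.
\item For the Aubin--Yau energy, $\frac{d^2}{dt^2}E_\omega=\int_X(\ddot\phi-|\partial\dot\phi|^2)\omega_\phi^n=0$.
\item For the linear term, $\frac{d^2}{dt^2}L_{\omega,\Omega}=\int_X|\partial u|^2_{\omega_\phi}\,\Omega$.
\end{itemize}
Combining these yields
\[
-\int_X u\,P_{\omega_\phi}u\,\omega_\phi^n=\int_X|\mathcal D u|^2_{\omega_\phi}\,\omega_\phi^n+\int_X|\partial u|^2_{\omega_\phi}\,\Omega .
\]
The one step I expect to need care is this second-variation bookkeeping, in particular checking the Mabuchi formula for a non-geodesic path in flat coordinates. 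Choosing the geodesic-type path is meant to avoid having to handle the correction term. As a fallback, one can expand $-\int_X uP_{\omega_\phi}u\,\omega_\phi^n$ directly from \eqref{eq:P-operator} by integration by parts and substitute $\Scal(\omega_\phi)\,\omega_\phi^n=\Omega$.

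\emph{Step 3: the kernel.} If $P_{\omega_\phi}u=0$, the identity of Step 2 forces $\int_X|\partial u|^2\,\Omega=0$. Since $\Omega>0$, $u$ is constant. This is the same mechanism as in Proposition~\ref{prop:weak-convexity}. Conversely, constants lie in the kernel because $P_{\omega_\phi}$ has no zeroth-order term. Hence $\ker P_{\omega_\phi}=\mathbb R$.

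\emph{Step 4: the isomorphism.} The operator $P_{\omega_\phi}$ is elliptic of order four with principal part $-\Delta^2_{\omega_\phi}$ and smooth coefficients. Hence $P_{\omega_\phi}:C^{4,\gamma}\to C^{0,\gamma}$ is Fredholm, and by self-adjointness its range is the $L^2(\omega_\phi^n)$-orthogonal complement of $\ker P_{\omega_\phi}=\mathbb R$, namely the functions of $\omega_\phi^n$-mean zero. We already know that $P_{\omega_\phi}$ maps into $C^{0,\gamma}_0$ because $\int_X P_{\omega_\phi}u\,\omega_\phi^n=0$. Restricting to $C^{4,\gamma}_0$ removes the constant kernel, so the restriction is injective. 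Surjectivity onto $C^{0,\gamma}_0$ follows from the range description, after subtracting the mean of a preimage, together with Schauder regularity. Boundedness of the inverse then follows from the open mapping theorem.
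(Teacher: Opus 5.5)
Your proposal is correct and follows the paper's proof. Like the paper, you identify \(P_{\omega_\phi}\) with minus the Hessian of \(\mathcal F_{\omega,\Omega}\), establish \(-\int_X u\,P_{\omega_\phi}u\,\omega_\phi^n=\int_X|\bar\partial\nabla^{1,0}_{\omega_\phi}u|^2_{\omega_\phi}\,\omega_\phi^n+\int_X|\partial u|^2_{\omega_\phi}\,\Omega\), use \(\Omega>0\) to get \(\ker P_{\omega_\phi}=\mathbb R\), and finish with the Fredholm alternative. The correction term you allow for in Step~1 is in fact zero at every \(\phi\), since neither \(v\) nor \(\Omega\) depends on \(\phi\).

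The only difference is bookkeeping in Step~2. The paper differentiates along the affine path \(\phi+tu\). It then uses \(\Scal(\omega_\phi)\,\omega_\phi^n=\Omega\) to merge \((\Scal(\omega_\phi)-\overline S_\alpha)|\partial u|^2_{\omega_\phi}\omega_\phi^n\) with the \(\overline S_\alpha|\partial u|^2_{\omega_\phi}\omega_\phi^n\) coming from \(-\overline S_\alpha E_\omega\). Your path with geodesic acceleration at \(t=0\) makes those terms vanish and produces \(\int_X|\partial u|^2_{\omega_\phi}\Omega\) directly from \(L_{\omega,\Omega}\). You then spend the critical-point condition on killing \(D\mathcal F_{\omega,\Omega}(\phi)[\ddot\phi_0]\) instead. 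Both bookkeepings are valid.
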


\begin{proof}
Differentiating \eqref{eq:F-differential} at \(\phi\) in the direction \(u\),  while keeping \(v\) fixed, and using the definition of \(\mathcal L_{\omega_\phi}\), 
we obtain
\begin{equation}
        D^2\mathcal F_{\omega,\Omega}(\phi)[u,v]
        =
        -\int_X
        v\,P_{\omega_\phi}u\,\omega_\phi^n.
        \label{eq:second-differential-P}
\end{equation}
Since the second differential is symmetric, \eqref{eq:second-differential-P} implies
\[
        \int_X
        v\,P_{\omega_\phi}u\,\omega_\phi^n
        =
        \int_X
        u\,P_{\omega_\phi}v\,\omega_\phi^n.
\]
Thus \(P_{\omega_\phi}\) is formally self-adjoint with respect to \(\omega_\phi^n\).

We next compute the quadratic form at the solution \(\phi\). Along the affine path \(t\mapsto\phi+t\,u\), the second variational formulas give
\[
        D^2\mathcal M_\omega(\phi)[u,u]
        =
        \int_X
        \left|
        \bar\partial
        \nabla_{\omega_\phi}^{1,0}u
        \right|_{\omega_\phi}^2
        \,\omega_\phi^n
        +
        \int_X
        \bigl(
        \Scal(\omega_\phi)-\overline S_\alpha
        \bigr)
        \,
        \lvert\partial u\rvert_{\omega_\phi}^2
        \,\omega_\phi^n
\]
and
\[
        D^2E_\omega(\phi)[u,u]
        =
        -\int_X
        \lvert\partial u\rvert_{\omega_\phi}^2
        \,\omega_\phi^n.
\]
Since \(L_{\omega,\Omega}\) is linear,
\[
        D^2L_{\omega,\Omega}(\phi)[u,u]
        =
        0.
\]
Using \eqref{eq:scalar-measure}, we therefore obtain
\begin{equation}
        -\int_X
        u\,P_{\omega_\phi}u\,\omega_\phi^n
        =
        D^2\mathcal F_{\omega,\Omega}(\phi)[u,u] 
        =
        \int_X
        \left|
        \bar\partial
        \nabla_{\omega_\phi}^{1,0}u
        \right|_{\omega_\phi}^2
        \,\omega_\phi^n                                
        +
        \int_X
        \lvert\partial u\rvert_{\omega_\phi}^2
        \,\Omega.
        \label{eq:P-positive-quadratic-form}
\end{equation}
Let \(u\in C^{4,\gamma}(X,\mathbb R)\) satisfy
\(P_{\omega_\phi}u=0\). Equation \eqref{eq:P-positive-quadratic-form} then gives
\[
        \int_X
        \lvert\partial u\rvert_{\omega_\phi}^2
        \,\Omega
        =
        0.
\]
Since \(\Omega\) is a smooth positive volume form, we obtain \(\partial u=0\). Hence \(u\) is constant, which yields
\[
        \ker P_{\omega_\phi}
        =
        \mathbb R.
\]

Finally, \(P_{\omega_\phi}\) is a formally self-adjoint fourth-order elliptic operator, and therefore has Fredholm index zero. Its image is contained in
\(C^{0,\gamma}_0(X,\omega_\phi)\), since
\[
        \int_X
        P_{\omega_\phi}u\,\omega_\phi^n
        =
        0.
\]
By the Fredholm alternative,
\[
        \operatorname{Im}P_{\omega_\phi}
        =
        C^{0,\gamma}_0(X,\omega_\phi).
\]
Restricting the domain to \(C^{4,\gamma}_0(X,\omega_\phi)\) removes the constant kernel and gives the desired isomorphism.
\end{proof}


\subsection{Finite-energy space and \(d_1\)-coercivity}
\label{subsec:finite-energy-stability}

We briefly recall the finite-energy space used in the variational approach to canonical K\"ahler metrics, and extend \(\mathcal F_{\omega,\Omega}\) to this setting.

Let \(\PSH(X,\omega)\) denote the space of \(\omega\)-plurisubharmonic (psh) functions. The Aubin--Yau energy extends from smooth potentials to \(\PSH(X,\omega)\) by monotone approximation. The finite-energy space is
\[
        \mathcal E^1(X,\omega)
        =
        \left\{
        \phi\in\PSH(X,\omega):
        \int_X\omega_\phi^n=V_\alpha,\quad
        E_\omega(\phi)>-\infty
        \right\},
\]
where \(\omega_\phi^n\) is the non-pluripolar Monge--Amp\`ere measure of \(\phi\).

On \(\Hpot_\omega\), consider the \(L^1\)-type Finsler norm
\begin{equation*}
        \|\xi\|_{1,\phi}
        =
        \frac{1}{V_\alpha}
        \int_X|\xi|\,\omega_\phi^n,
        \qquad
        \xi\in T_\phi\Hpot_\omega.
\end{equation*}
We denote the associated path-length metric by \(d_1\), and we use the same notation for its extension to \(\mathcal E^1(X,\omega)\). In \cite{Darvas2015}, Darvas proved that the metric completion of \((\Hpot_\omega,d_1)\) is naturally identified with \((\mathcal E^1(X,\omega),d_1)\), and that the latter is a complete geodesic metric space.

The Aubin--Yau energy \(E_\omega\) is \(d_1\)-continuous on \(\mathcal E^1(X,\omega)\). The Mabuchi \(K\)-energy admits its canonical lower semicontinuous extension
\[
        \mathcal M_\omega:
        \mathcal E^1(X,\omega)
        \longrightarrow
        \mathbb R\cup\{+\infty\},
\]
which is convex along finite-energy geodesics \cite[Theorem~4.7]{BDL2017}. We also have:
\begin{lemma}
\label{lem:smooth-linear-term-d1-continuity}
Let \(\nu\) be a smooth real top-degree form.  Then the functional
\[
        \phi\longmapsto L_\nu(\phi):=\int_X\phi\,\nu
\]
is finite and \(d_1\)-continuous on \(\mathcal E^1(X,\omega)\). Moreover, if \(\nu>0\), then \(L_\nu\) is convex along finite-energy \(d_1\)-geodesic segments.
\end{lemma}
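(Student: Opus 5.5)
The plan is to split \(\nu=\nu^+-\nu^-\) with \(\nu^\pm\) smooth nonnegative, or more conveniently to write \(\nu=C\omega^n-(C\omega^n-\nu)\) with \(C\) large so that both \(C\omega^n\) and \(C\omega^n-\nu\) are smooth positive volume forms. By linearity this reduces finiteness and continuity to the case of a smooth positive volume form \(\mu\le C'\omega^n\), and then to \(\mu=\omega^n\) itself up to a bounded density.

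For finiteness, every \(\phi\in\mathcal E^1(X,\omega)\) lies in \(L^1(\omega^n)\), since \(\omega\)-psh functions are in \(L^1\), and \(\sup_X\phi<\infty\). Hence \(\int_X|\phi|\,\mu\le C'\int_X|\phi|\,\omega^n<\infty\).

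For continuity, I will invoke the standard estimate of Darvas (\cite{Darvas2015}): for \(\phi,\psi\in\mathcal E^1(X,\omega)\),
\[
        \int_X|\phi-\psi|\,\omega^n\le C\,d_1(\phi,\psi).
\]
This follows from Darvas' comparison \(d_1(\phi,\psi)\asymp\int|\phi-\psi|\,(\omega_\phi^n+\omega_\psi^n)\) together with the triangle inequality through the base point \(0\), i.e.\ \(d_1(\phi,\psi)\ge\) the \(L^1(\omega^n)\)-type term obtained from the Pythagorean formula \(d_1(\phi,\psi)=d_1(\phi,\max)+\ldots\). Concretely, I would use the known fact that \(d_1\)-convergence implies \(L^1(\omega^n)\) convergence with a Lipschitz bound, i.e.\ that the map \((\mathcal E^1,d_1)\to L^1(\omega^n)\) is Lipschitz. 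Then
\[
        |L_\nu(\phi)-L_\nu(\psi)|\le \sup|\nu/\omega^n|\int|\phi-\psi|\,\omega^n\le C\,d_1(\phi,\psi),
\]
which is the main (cited) ingredient. Justifying this Lipschitz inequality cleanly from the literature is the step I expect to need the most care.

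For convexity when \(\nu>0\), a finite-energy geodesic \(t\mapsto\phi_t\) between \(\phi_0,\phi_1\in\mathcal E^1\) is the decreasing limit of weak geodesics \(\phi^k_t\) joining smooth decreasing approximants \(\phi_0^k\downarrow\phi_0\), \(\phi_1^k\downarrow\phi_1\) (by the comparison principle for the envelope construction). Each \(\phi^k_t(x)\) is convex in \(t\) for every \(x\), since the geodesic is subharmonic in \(\zeta\) and \(\mathbb R\)-invariant. Pointwise convexity passes to the decreasing limit, so \(t\mapsto\phi_t(x)\) is convex for each \(x\). Integrating against \(\nu>0\), which is legitimate by finiteness, gives
\[
        L_\nu(\phi_t)\le(1-t)L_\nu(\phi_0)+tL_\nu(\phi_1).
\]
Alternatively, one can simply use \(d_1\)-continuity: \(\phi^k_t\to\phi_t\) in \(d_1\), so the convexity inequality for \(L_\nu(\phi^k_t)\) passes to the limit.
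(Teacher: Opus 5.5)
Your finiteness argument is fine, and so is the convexity argument. There you pass to decreasing limits of weak geodesics between smooth approximants, while the paper works directly with the \(\mathbb R\)-invariant \(\pi_X^*\omega\)-psh function \(\Phi\) representing the finite-energy geodesic and uses subharmonicity of \(\zeta\mapsto\Phi(x,\zeta)\) for \(\nu\)-a.e.\ \(x\). Both routes give pointwise convexity in \(t\), and integrating against \(\nu>0\) concludes. The splitting \(\nu=C\omega^n-(C\omega^n-\nu)\) is harmless but unnecessary, since \(|\nu|\le A\,\omega^n\) suffices.

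The continuity step, however, rests on a false inequality. There is no constant \(C\) with \(\int_X|\phi-\psi|\,\omega^n\le C\,d_1(\phi,\psi)\), not even for bounded potentials in a fixed \(d_1\)-ball. Your sketch does not produce such a bound anyway. Darvas' comparison involves \(\omega_\phi^n\) and \(\omega_\psi^n\), not \(\omega^n\). The Pythagorean formula uses the rooftop envelope \(P(\phi,\psi)\), not \(\max\). Routing through \(0\) only yields \(\int_X|\phi-\psi|\,\omega^n\le C\bigl(d_1(0,\phi)+d_1(0,\psi)\bigr)\), which does not become small as \(d_1(\phi,\psi)\to0\).

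For a counterexample, take \(X=\mathbb P^1\) with the Fubini--Study form in the toric picture. \(S^1\)-invariant \(\omega\)-psh functions correspond to convex functions \(\Phi\) of \(s=\log|z|^2\) with slopes in \([0,1]\). For \(w\le u\), up to a fixed normalizing constant, \(d_1(u,w)=E_\omega(u)-E_\omega(w)=\int_0^1(\Phi_w^*-\Phi_u^*)\,dx\), where \({}^*\) is the Legendre transform. Choose \(\Phi_u^*=M|x-\tfrac12|\) and \(\Phi_w^*=\max\{M|x-\tfrac12|,h\}\). Then \(d_1(u,w)=h^2/M\). A direct computation gives \(u-w=h\,(1-|s|/M)_+\), so \(\int_X(u-w)\,\omega\) is a fixed multiple of \(\int_{\mathbb R}h(1-|s|/M)_+\,\frac{e^s}{(1+e^s)^2}\,ds=c_Mh\) with \(c_M>0\). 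The ratio therefore blows up like \(M/h\) as \(h\to0\); correct quantitative bounds are only H\"older-type.

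What is true, and what the paper uses, is the qualitative statement that \(d_1\)-convergence implies \(L^1(\omega^n)\)-convergence (Theorem~5.8 of \cite{Darvas2015} with \(\chi(l)=|l|\), \(v=0\)). Combined with \(|\nu|\le A\,\omega^n\), this gives sequential, hence genuine, \(d_1\)-continuity of \(L_\nu\), though not a Lipschitz bound. Replace your Lipschitz step with this and the proof goes through. Your alternative convexity argument via \(d_1\)-continuity is then also valid.
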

\begin{proof}
Since \(\nu\) is smooth, there exists \(A>0\), depending only on \(\omega\) and \(\nu\), such that \(|\nu|\leq A\,\omega^n\). Let \(\phi_j\to\phi\) in \(d_1\).  By
\cite[Theorem~5.8]{Darvas2015}, applied with
\(\chi(l)=|l|\) and \(v=0\), we have
\[
        \int_X|\phi_j-\phi|\,\omega^n\longrightarrow0.
\]
Therefore
\[
        \left|
        \int_X(\phi_j-\phi)\,\nu
        \right|
        \leq
        A\int_X|\phi_j-\phi|\,\omega^n
        \longrightarrow0.
\]
This proves the \(d_1\)-continuity.  Finiteness follows from \(\mathcal E^1(X,\omega)\subset L^1(X,\omega^n)\).

Assume now that \(\nu>0\), and let \([0,1]\ni t\mapsto\phi_t\) be a finite-energy \(d_1\)-geodesic segment. As in the proof of Proposition~\ref{prop:weak-convexity}, this geodesic is represented by an \(\mathbb R\)-invariant \(\pi_X^*\omega\)-plurisubharmonic function
\(\Phi(x,\zeta)\) with \(\zeta=t+\sqrt{-1}\,r\in\Sigma\),
on \(X\times\Sigma\). For \(\nu\)-almost every \(x\in X\), the function \(\zeta\longmapsto\Phi(x,\zeta)\)
is subharmonic on \(\Sigma\).  Since \(\Phi\) is \(\mathbb R\)-invariant, this subharmonic function depends only on \(t=\operatorname{Re}\zeta\). Therefore
\(t\mapsto\phi_t(x)\) is convex for \(\nu\)-almost every \(x\in X\). It follows that, for every \(t\in[0,1]\),
\[
        \phi_t(x)
        \leq
        (1-t)\phi_0(x)+t\phi_1(x)
\]
for \(\nu\)-almost every \(x\).  Since
\(\phi_t\in\mathcal E^1(X,\omega)\subset L^1(X,\nu)\), all terms are \(\nu\)-integrable. Integrating the preceding inequality against the
positive measure \(\nu\), we obtain
\[
        L_\nu(\phi_t)
        \leq
        (1-t)L_\nu(\phi_0)+tL_\nu(\phi_1).
\]
Thus \(L_\nu\) is convex along finite-energy \(d_1\)-geodesic segments.
\end{proof}

By Lemma~\ref{lem:smooth-linear-term-d1-continuity},
\(L_{\omega,\Omega}\) extends uniquely and \(d_1\)-continuously to \(\mathcal E^1(X,\omega)\). Therefore, we can extend
\[
        \mathcal F_{\omega,\Omega}
        =
        \mathcal M_\omega
        -
        \overline S_\alpha E_\omega
        +
        L_{\omega,\Omega}
\]
to \(\mathcal E^1(X,\omega)\) by the same formula. This extension is \(d_1\)-lower semicontinuous.  It is convex along finite-energy geodesics, because \(\mathcal M_\omega\) is convex, \(E_\omega\) is affine, and \(L_{\omega,\Omega}\) is convex for \(\Omega>0\).

It follows from Lemma \ref{lem:constant-invariance} that \(\mathcal F_{\omega,\Omega}\) is invariant under addition of constants. We therefore regard \(\mathcal F_{\omega,\Omega}\) as living on \(\mathcal E^1(X,\omega)/\mathbb R\), and represent this quotient by the canonical slice
\begin{equation*}
        \mathcal E^1_0(X,\omega)
        =
        \left\{
        \phi\in\mathcal E^1(X,\omega):
        E_\omega(\phi)=0
        \right\}.
\end{equation*}
Indeed,
\[
        E_\omega(\phi+c)=E_\omega(\phi)+cV_\alpha,
\]
so every class modulo constants has a unique representative in \(\mathcal E^1_0(X,\omega)\). By our normalization \(\mathcal{M}_\omega(0)=0\), hence \(\mathcal F_{\omega,\Omega}(0)=0\) on \(\mathcal E^1_0(X,\omega)\). We can now give the formal definition of \(d_1\)-coercivity of the functional \(\mathcal{F}_{\omega,\Omega}\) for a fixed prescribed measure \(\Omega\in\vap\) and a fixed reference metric \(\omega\in\alpha\).

\begin{definition}[\(d_1\)-coercivity]
\label{def:d1-coercivity}
We say that \(\mathcal F_{\omega,\Omega}\) is \emph{\(d_1\)-coercive} if there exist constants \(\delta>0\) and \(C>0\) such that
\begin{equation}
        \mathcal F_{\omega,\Omega}(\phi)
        \ge
        \delta\,d_1(0,\phi)-C
        \label{eq:d1-coercivity}
\end{equation}
for every \(\phi\in\mathcal E^1_0(X,\omega)\).
\end{definition}

This is the analogue of the \(d_1\)-coercivity for the Mabuchi \(K\)-energy  in the variational theory of canonical K\"ahler metrics (see, for example, \cite{DR2017}).

We next introduce the radial slope from an arbitrary base point in the normalized finite-energy space. Let
\(\ell:[0,\infty)\rightarrow\mathcal E^1_0(X,\omega)\)
be a finite-energy \(d_1\)-geodesic ray parameterized with constant \(d_1\)-speed, and assume that
\(\mathcal F_{\omega,\Omega}(\ell_0)<+\infty\).
Set \(v_1(\ell):=d_1(\ell_0,\ell_1)\). Thus
\[
        d_1(\ell_r,\ell_s)
        =
        |r-s|\,v_1(\ell),
        \qquad
        r,s\geq0.
\]

Since \(\mathcal F_{\omega,\Omega}\) is convex along finite-energy geodesics, the quotient
\[
        \frac{
        \mathcal F_{\omega,\Omega}(\ell_t)
        -
        \mathcal F_{\omega,\Omega}(\ell_0)
        }{t}
\]
is nondecreasing for \(t>0\). We define the radial slope of \(\mathcal F_{\omega,\Omega}\) along \(\ell\) by
\[
        \mathcal F_{\omega,\Omega}\{\ell\}
        :=
        \lim_{t\to\infty}
        \frac{
        \mathcal F_{\omega,\Omega}(\ell_t)
        -
        \mathcal F_{\omega,\Omega}(\ell_0)
        }{t}
        \in(-\infty,+\infty].
\]
Because \(\mathcal F_{\omega,\Omega}(\ell_0)<+\infty\), equivalently,
\[
        \mathcal F_{\omega,\Omega}\{\ell\}
        =
        \lim_{t\to\infty}
        \frac{\mathcal F_{\omega,\Omega}(\ell_t)}{t}.
\]
In particular, when \(\ell_0=0\), this agrees with the convention used in Section~\ref{sec:introduction}.

\begin{proposition}
\label{prop:coercivity-implies-radial-bound}
Suppose that \(\mathcal F_{\omega,\Omega}\) is \(d_1\)-coercive with constants \(\delta>0\) and \(C>0\). Then, for every finite-energy \(d_1\)-geodesic ray
\(\ell:[0,\infty)\rightarrow\mathcal E^1_0(X,\omega)\)
parameterized with constant \(d_1\)-speed and satisfying
\(\mathcal F_{\omega,\Omega}(\ell_0)<+\infty\),
one has
\[
        \mathcal F_{\omega,\Omega}\{\ell\}
        \geq
        \delta\,v_1(\ell).
\]
\end{proposition}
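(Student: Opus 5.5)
The plan is to apply the coercivity inequality \eqref{eq:d1-coercivity} along the ray and compare the distance from the origin with the parameter \(t\) via the triangle inequality. Since \(\ell\) takes values in \(\mathcal E^1_0(X,\omega)\), the definition of \(d_1\)-coercivity applies to every \(\ell_t\). This gives
\[
        \mathcal F_{\omega,\Omega}(\ell_t)
        \geq
        \delta\,d_1(0,\ell_t)-C
        \qquad\text{for all }t\geq0.
\]

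Next we bound \(d_1(0,\ell_t)\) from below by the triangle inequality and the constant-speed parametrization:
\[
        d_1(0,\ell_t)
        \geq
        d_1(\ell_0,\ell_t)-d_1(0,\ell_0)
        =
        t\,v_1(\ell)-d_1(0,\ell_0).
\]
The quantity \(d_1(0,\ell_0)\) is finite, since \(\ell_0\in\mathcal E^1(X,\omega)\) and \(d_1\) is a genuine metric on the completion. Combining the two inequalities and dividing by \(t>0\), we get
\[
        \frac{\mathcal F_{\omega,\Omega}(\ell_t)}{t}
        \geq
        \delta\,v_1(\ell)
        -
        \frac{\delta\,d_1(0,\ell_0)+C}{t}.
\]

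Finally we let \(t\to\infty\). Because \(\mathcal F_{\omega,\Omega}(\ell_0)<+\infty\), the radial slope is \(\lim_{t\to\infty}\mathcal F_{\omega,\Omega}(\ell_t)/t\), as noted just before the statement. That limit exists in \((-\infty,+\infty]\), because the difference quotient \((\mathcal F_{\omega,\Omega}(\ell_t)-\mathcal F_{\omega,\Omega}(\ell_0))/t\) is monotone by convexity. Moreover \(\mathcal F_{\omega,\Omega}(\ell_0)/t\to0\), and here we use that \(\mathcal F_{\omega,\Omega}(\ell_0)>-\infty\), which holds by coercivity applied at \(\ell_0\). The error term on the right tends to zero, so \(\mathcal F_{\omega,\Omega}\{\ell\}\geq\delta\,v_1(\ell)\).

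There is no serious obstacle in this argument. The only point needing care is the identification of the two expressions for the radial slope, which requires \(\mathcal F_{\omega,\Omega}(\ell_0)\) to be finite. The upper bound is the stated hypothesis, and the lower bound comes from coercivity itself.
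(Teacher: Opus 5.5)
Your proposal is correct and follows the paper's proof: apply coercivity at \(\ell_t\), use the triangle inequality \(d_1(0,\ell_t)\geq t\,v_1(\ell)-d_1(0,\ell_0)\), divide by \(t\), and let \(t\to\infty\). The only cosmetic difference is that the paper keeps the difference quotient \((\mathcal F_{\omega,\Omega}(\ell_t)-\mathcal F_{\omega,\Omega}(\ell_0))/t\) throughout, while you work with \(\mathcal F_{\omega,\Omega}(\ell_t)/t\); this is equivalent because \(\mathcal F_{\omega,\Omega}(\ell_0)\) is finite.
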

\begin{proof}
It follows from the triangle inequality,
\[
\begin{aligned}
        d_1(0,\ell_t)
        &\geq
        d_1(\ell_0,\ell_t)-d_1(0,\ell_0)
        \\
        &=
        t\,v_1(\ell)-d_1(0,\ell_0).
\end{aligned}
\]
Then, by the \(d_1\)-coercivity of \(\mathcal F_{\omega,\Omega}\),
\[
\begin{aligned}
        \mathcal F_{\omega,\Omega}(\ell_t)
        -
        \mathcal F_{\omega,\Omega}(\ell_0)
        \geq
        \delta\,t\,v_1(\ell)
        -
        \delta\,d_1(0,\ell_0)
        -
        C
        -
        \mathcal F_{\omega,\Omega}(\ell_0).
\end{aligned}
\]
Dividing by \(t\) and letting \(t\to\infty\) gives
\[
        \mathcal F_{\omega,\Omega}\{\ell\}
        \geq
        \delta\,v_1(\ell),
\]
which completes the proof.
\end{proof}


\subsection{Independence of the reference metric and prescribed measure}
\label{subsec:target-independence}

We first record the transformation rule for
\(\mathcal F_{\omega,\Omega}\) under a change of reference metric.
\begin{proposition}
\label{prop:change-reference-metric}
Let \(\omega,\eta\in\alpha\) be two K\"ahler metrics, so that \(\eta=\omega+\ddbar\psi\) for \(\psi \in C^\infty(X,\mathbb R)\). Then for every \(u \in \mathcal E^1(X,\eta)\) and \(\Omega \in \vap\),
\begin{equation*}
        \mathcal F_{\eta,\Omega}(u)
        =
        \mathcal F_{\omega,\Omega}(u+\psi)
        -
        \mathcal F_{\omega,\Omega}(\psi).
\end{equation*}
\end{proposition}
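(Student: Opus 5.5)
The identity is a cocycle property, so the plan is to check it first on smooth potentials and then pass to \(\mathcal E^1\) by approximation. The map \(u\mapsto u+\psi\) identifies \(\Hpot_\eta\) with \(\Hpot_\omega\) and \(\mathcal E^1(X,\eta)\) with \(\mathcal E^1(X,\omega)\). It preserves Monge--Amp\`ere measures, since \(\eta_u=\omega_{u+\psi}\). It is also an isometry for \(d_1\), because the Finsler norms agree under this identification.

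\textbf{Smooth case.} For \(u\in\Hpot_\eta\), set
\[
G(u):=\mathcal F_{\omega,\Omega}(u+\psi)-\mathcal F_{\omega,\Omega}(\psi).
\]
By \eqref{eq:F-differential}, for every smooth \(v\),
\[
DG(u)[v]=-\int_X v\,\bigl(\Scal(\omega_{u+\psi})\,\omega_{u+\psi}^n-\Omega\bigr)=-\int_X v\,\bigl(\Scal(\eta_u)\,\eta_u^n-\Omega\bigr).
\]
The same computation for \(\mathcal F_{\eta,\Omega}\), using the defining variational formulas of \(\mathcal M_\eta\), \(E_\eta\) and \(L_{\eta,\Omega}\), gives exactly this expression. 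Hence \(D\mathcal F_{\eta,\Omega}=DG\) on the convex set \(\Hpot_\eta\).

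Both functionals vanish at \(u=0\). For \(\mathcal F_{\eta,\Omega}\) this is the normalization \(\mathcal M_\eta(0)=E_\eta(0)=L_{\eta,\Omega}(0)=0\); for \(G\) it is immediate from the definition. Integrating along the segment \(t\mapsto tu\) therefore gives \(\mathcal F_{\eta,\Omega}=G\) on \(\Hpot_\eta\).

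One point needs care: \(\mathcal F_{\eta,\Omega}\) must be defined through \(\mathcal M_\eta\), \(E_\eta\) and \(L_{\eta,\Omega}\), all normalized at \(\eta\). Here \(L_{\eta,\Omega}(u)=\int_X u\,\Omega\) with the same formula. The explicit cocycle formulas are not needed, since only the differentials and the value at the base point enter.

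\textbf{Extension to \(\mathcal E^1(X,\eta)\).} On \(\mathcal E^1\), each functional is defined term by term:
\begin{itemize}
\item \(E\) by its \(d_1\)-continuous extension;
\item \(L\) via Lemma~\ref{lem:smooth-linear-term-d1-continuity};
\item \(\mathcal M\) by the canonical lower semicontinuous extension of \cite{BDL2017}.
\end{itemize}
The \(E\)- and \(L\)-parts are \(d_1\)-continuous, and the shift by \(\psi\) is a \(d_1\)-isometry. So the identity for these parts passes to \(\mathcal E^1\) by density of smooth potentials (Darvas).

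The \(K\)-energy part, \(\mathcal M_\eta(u)=\mathcal M_\omega(u+\psi)-\mathcal M_\omega(\psi)\), is the main obstacle, because \(\mathcal M\) is only lower semicontinuous. I would use the Chen--Tian formula, which expresses \(\mathcal M\) as an entropy term plus pluripotential energy terms, and is exactly how the extension is defined in \cite{BDL2017}. Write \(\mathcal M_\eta(u)=\Ent(\eta^n,\eta_u^n)+(\text{energy terms})\). Changing the reference from \(\eta^n\) to \(\omega^n\) alters the entropy by \(\int_X\log(\omega^n/\eta^n)\,\eta_u^n\). That integrand is a smooth function integrated against \(\eta_u^n=\omega_{u+\psi}^n\), so it is \(d_1\)-continuous.

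Consequently, the difference \(\mathcal M_\eta(u)-\mathcal M_\omega(u+\psi)\) is a \(d_1\)-continuous expression that is constant on the dense set \(\Hpot_\eta\). It is therefore constant on all of \(\mathcal E^1(X,\eta)\), with the value \(-\mathcal M_\omega(\psi)\). This also shows that the extensions are compatible, including the value \(+\infty\).

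Summing the three parts yields the stated formula for every \(u\in\mathcal E^1(X,\eta)\).
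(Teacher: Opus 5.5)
Your proposal is correct and follows the same route as the paper. You first verify the identity on \(\Hpot_\eta\); your differential-matching argument is the standard proof of the cocycle identities the paper quotes. You then pass to \(\mathcal E^1(X,\eta)\) using density of smooth potentials and \(d_1\)-continuity of the \(E\)- and \(L\)-terms.

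The one difference is at the \(K\)-energy step. The paper simply invokes the cocycle property of the extended \(K\)-energy. You justify it explicitly by changing the reference measure in the Chen--Tian entropy term, which makes your extension step more self-contained.
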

\begin{proof}
For \(u\in\Hpot_\eta\), we have \(\eta+\ddbar u = \omega+\ddbar(\psi+u)\). Thus \(\eta_u=\omega_{\psi+u}\).
By the cocycle identities, we have
\[
        E_\eta(u)
        =
        E_\omega(\psi+u)-E_\omega(\psi).
\]
Similarly, 
\[
        \mathcal M_\eta(u)
        =
        \mathcal M_\omega(\psi+u)-\mathcal M_\omega(\psi).
\]
Finally,
\[
        L_{\eta,\Omega}(u)
        =
        \int_Xu\,\Omega
        =
        \int_X(\psi+u)\Omega-\int_X\psi\,\Omega
        =
        L_{\omega,\Omega}(\psi+u)-L_{\omega,\Omega}(\psi).
\]
Combining the three identities gives the desired equality on smooth potentials. In particular, the equality extends to \(\mathcal E^1\) by the cocycle property of the extended Mabuchi \(K\)-energy and the \(d_1\)-continuity of \(E_\omega\) and \(L_{\omega,\Omega}\).
\end{proof}

\begin{corollary}\label{cor:reference-independent}
Let \(\omega,\eta\in\alpha\) be two K\"ahler metrics, and let \(\Omega\in\mathcal V_\alpha^+\). Then \(\mathcal F_{\eta,\Omega}\) is \(d_1\)-coercive on
\(\mathcal E^1_0(X,\eta)\) if and only if \(\mathcal F_{\omega,\Omega}\) is \(d_1\)-coercive on \(\mathcal E^1_0(X,\omega)\).
\end{corollary}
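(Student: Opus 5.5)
We will derive the equivalence directly from Proposition~\ref{prop:change-reference-metric}, using the translation map
\[
        T_\psi:\mathcal E^1(X,\eta)\longrightarrow\mathcal E^1(X,\omega),\qquad u\longmapsto u+\psi.
\]
This map is a bijection, since \(\eta_u=\omega_{u+\psi}\) and \(\psi\) is smooth. It is also a \(d_1\)-isometry. Indeed, on smooth potentials the Finsler norms agree, because \(\|\xi\|_{1,u}\) with respect to \(\eta\) is computed with \(\eta_u^n=\omega_{u+\psi}^n\). Hence path lengths agree, and the isometry extends to the completions. By symmetry (\(\omega=\eta+\ddbar(-\psi)\)), it suffices to show that coercivity of \(\mathcal F_{\omega,\Omega}\) implies coercivity of \(\mathcal F_{\eta,\Omega}\).

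The one point that needs care is the normalization slice, since \(T_\psi\) does not map \(\mathcal E^1_0(X,\eta)\) onto \(\mathcal E^1_0(X,\omega)\). Take \(u\in\mathcal E^1_0(X,\eta)\) and set \(\phi:=u+\psi-c\), where
\[
        c:=\frac{E_\omega(u+\psi)}{V_\alpha}.
\]
Then \(\phi\in\mathcal E^1_0(X,\omega)\). The cocycle identity \(E_\eta(u)=E_\omega(u+\psi)-E_\omega(\psi)\) together with \(E_\eta(u)=0\) gives the constant value \(c=E_\omega(\psi)/V_\alpha\), independent of \(u\).

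Now combine Proposition~\ref{prop:change-reference-metric} with Lemma~\ref{lem:constant-invariance}:
\[
        \mathcal F_{\eta,\Omega}(u)=\mathcal F_{\omega,\Omega}(\phi)-\mathcal F_{\omega,\Omega}(\psi)\ge \delta\,d_1^{\omega}(0,\phi)-C-\mathcal F_{\omega,\Omega}(\psi).
\]
To compare distances, use the isometry and the triangle inequality:
\[
        d_1^{\eta}(0,u)=d_1^{\omega}(\psi,u+\psi)\le d_1^{\omega}(\psi,0)+d_1^{\omega}(0,\phi)+d_1^{\omega}(\phi,\phi+c).
\]
Here \(d_1(\phi,\phi+c)=|c|\), as one sees from the straight segment \(t\mapsto\phi+tc\), whose velocity \(c\) has norm \(|c|\), together with the reverse bound coming from \(E\) being \(d_1\)-Lipschitz with constant \(V_\alpha\). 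In any case it is enough that this term is bounded by a constant independent of \(u\). Therefore \(d_1^{\omega}(0,\phi)\ge d_1^{\eta}(0,u)-C'\), and so
\[
        \mathcal F_{\eta,\Omega}(u)\ge \delta\,d_1^{\eta}(0,u)-C'',
\]
where \(\mathcal F_{\omega,\Omega}(\psi)\) is finite because \(\psi\) is smooth.

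The main obstacle is the justification that \(T_\psi\) is a \(d_1\)-isometry on the completion and that the distance between \(\phi\) and \(\phi+c\) is controlled. Both follow from standard facts in \cite{Darvas2015}: \(d_1\) is the completion of the Finsler length metric, and \(d_1(\phi,\phi+c)=|c|\), which can be read off from the Pythagorean-type formula for \(d_1\) in terms of \(E\).
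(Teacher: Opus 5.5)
Your proposal is correct and follows essentially the same route as the paper. Your \(\phi=u+\psi-c\) is exactly the paper's \(\widehat T_\psi(u)=u+\widehat\psi\), and you combine Proposition~\ref{prop:change-reference-metric}, constant invariance, and the \(d_1\)-isometry \(u\mapsto u+\psi\) in the same way. The only cosmetic difference is in the distance comparison. The paper uses invariance of \(d_1\) under adding a common constant to write \(d_{1,\eta}(0,u)=d_{1,\omega}(\widehat\psi,\widehat T_\psi(u))\) and then applies a two-term triangle inequality, whereas you use a three-term inequality with the extra bounded term \(d_1(\phi,\phi+c)=|c|\).
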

\begin{proof}
Write \(\eta=\omega+\ddbar\psi\) for some \(\psi\in C^\infty(X,\mathbb R)\), and set
\[
        c_\psi
        :=
        \frac{E_\omega(\psi)}{V_\alpha},
        \qquad
        \widehat\psi
        :=
        \psi-c_\psi.
\]
By the cocycle identity for the Aubin--Yau energy, the map
\[
        \widehat T_\psi:
        \mathcal E^1_0(X,\eta)
        \longrightarrow
        \mathcal E^1_0(X,\omega),
        \qquad
        \widehat T_\psi(u):=u+\widehat\psi,
\]
is a bijection. The change of reference metric \(u \mapsto u+\psi\) is a \(d_1\)-isometry between the corresponding finite-energy spaces, and hence
\[
        d_{1,\eta}(0,u)
        =
        d_{1,\omega}
        \bigl(
        \widehat\psi,\widehat T_\psi(u)
        \bigr).
\]
Consequently, by the triangle inequality,
\begin{equation}
\left|
        d_{1,\omega}
        \bigl(
        0,\widehat T_\psi(u)
        \bigr)
        -
        d_{1,\eta}(0,u)
\right|
\leq
        d_{1,\omega}(0,\widehat\psi).
\label{eq:reference-change-d1-comparison}
\end{equation}
By Proposition~\ref{prop:change-reference-metric},
\begin{equation}
        \mathcal F_{\eta,\Omega}(u)
        =
        \mathcal F_{\omega,\Omega}
        \bigl(
        \widehat T_\psi(u)
        \bigr)
        -
        \mathcal F_{\omega,\Omega}(\widehat\psi).
\label{eq:reference-change-normalized-F}
\end{equation}

Suppose that \(\mathcal F_{\omega,\Omega}\) is \(d_1\)-coercive. Then, for some \(\delta>0\) and \(C>0\),
\[
        \mathcal F_{\omega,\Omega}(v)
        \geq
        \delta\,d_{1,\omega}(0,v)-C
\]
for every \(v\in\mathcal E^1_0(X,\omega)\). Applying this to \(v=\widehat T_\psi(u)\) and using
\eqref{eq:reference-change-d1-comparison} and
\eqref{eq:reference-change-normalized-F}, we obtain
\[
        \mathcal F_{\eta,\Omega}(u)
        \geq
        \delta\,d_{1,\eta}(0,u)-C',
\]
where \(C'>0\) is independent of \(u\). Thus \(\mathcal F_{\eta,\Omega}\) is \(d_1\)-coercive. The converse follows by interchanging \(\omega\) and \(\eta\).
\end{proof}

The preceding corollary shows that \(d_1\)-coercivity is independent of the reference metric when the prescribed measure is fixed. We now fix a reference metric \(\omega\in\alpha\) and compare the functionals associated with different admissible measures. We show that changing the admissible measure changes \(\mathcal F_{\omega,\Omega}\) only by a uniformly bounded term on \(\mathcal E^1(X,\omega)\).

Let \(\Omega_1,\Omega_2\in \mathcal V_\alpha^+\). By Yau's theorem \cite{Yau1978}, there are unique K\"ahler forms \(\eta_1,\eta_2\in\alpha\) such that
\begin{equation}
        \eta_i^n
        =
        \frac{\Omega_i}{\overline S_\alpha},
        \qquad
        i=1,2.
        \label{eq:eta-target}
\end{equation}

\begin{proposition}
\label{prop:bounded-target-difference}
There exists a constant \(C=C(\alpha,\Omega_1,\Omega_2)>0\) such that
\begin{equation}
        \left|
        \int_X
        \phi\,
        (\Omega_1-\Omega_2)
        \right|
        \leq
        C
        \label{eq:bounded-target-term}
\end{equation}
for every \(\phi\in\mathcal E^1(X,\omega)\).
\end{proposition}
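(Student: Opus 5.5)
Write \(\Omega_i=\overline S_\alpha\,\eta_i^n\) by \eqref{eq:eta-target}, with \(\eta_i=\omega+\ddbar\psi_i\) for smooth potentials \(\psi_i\in\Hpot_\omega\). The plan is to use the standard principle that the difference of two Monge--Amp\`ere-type measures is \(\ddbar\) of something smooth. This lets us integrate by parts and move all derivatives off the rough function \(\phi\).

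\emph{Algebraic step.} By the telescoping identity
\[
\eta_1^n-\eta_2^n=\ddbar(\psi_1-\psi_2)\wedge\sum_{j=0}^{n-1}\eta_1^{j}\wedge\eta_2^{n-1-j}=:\ddbar h\wedge\Theta,
\]
where \(h=\psi_1-\psi_2\) is smooth and \(\Theta\) is a smooth, closed, strongly positive \((n-1,n-1)\)-form. Hence \(\Omega_1-\Omega_2=\overline S_\alpha\,\ddbar h\wedge\Theta\).

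\emph{Smooth potentials.} First let \(\phi\in\Hpot_\omega\). Integration by parts gives
\[
\int_X\phi\,(\Omega_1-\Omega_2)=\overline S_\alpha\int_X h\,\ddbar\phi\wedge\Theta=\overline S_\alpha\int_X h\,(\omega_\phi-\omega)\wedge\Theta .
\]
Both \(\omega_\phi\wedge\Theta\) and \(\omega\wedge\Theta\) are positive measures. Their total masses are cohomological: each equals \(\sum_j\alpha^n=nV_\alpha\). Therefore
\[
\Bigl|\int_X\phi\,(\Omega_1-\Omega_2)\Bigr|\le 2n\,\overline S_\alpha V_\alpha\,\sup_X|h|,
\]
a bound independent of \(\phi\). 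Alternatively, one can split each \(\eta_1^j\wedge\eta_2^{n-1-j}\) and use that \(\omega_\phi\wedge\eta_1^j\wedge\eta_2^{n-1-j}\) has mass \(V_\alpha\).

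\emph{Extension to \(\mathcal E^1(X,\omega)\).} The main point requiring care is passing from smooth potentials to all of \(\mathcal E^1(X,\omega)\). By Lemma~\ref{lem:smooth-linear-term-d1-continuity}, applied with \(\nu=\Omega_1-\Omega_2\) (a smooth real top form), the map \(\phi\mapsto\int_X\phi\,(\Omega_1-\Omega_2)\) is \(d_1\)-continuous on \(\mathcal E^1(X,\omega)\). Since \(\Hpot_\omega\) is \(d_1\)-dense in \(\mathcal E^1(X,\omega)\) by Darvas' theorem, the uniform bound passes to the limit. This gives \eqref{eq:bounded-target-term} with \(C=2n\,S_\alpha\sup_X|\psi_1-\psi_2|\), which depends only on \(\alpha,\Omega_1,\Omega_2\).

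As an alternative for the extension, one could use Bedford--Taylor integration by parts directly for \(\phi\in\mathcal E^1\), which holds for bounded potentials and extends by monotone approximation. The density argument is cleaner given the earlier lemma, so I would use that.
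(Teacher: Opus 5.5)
Your proposal is correct and matches the paper's proof essentially step for step. Both arguments use the same ingredients: the telescoping factorization \(\Omega_1-\Omega_2=\overline S_\alpha\,\ddbar h\wedge\Theta\) with \(\Theta=\sum_j\eta_1^j\wedge\eta_2^{n-1-j}\); integration by parts onto \((\omega_\phi-\omega)\wedge\Theta\), whose two pieces each have mass \(nV_\alpha\), giving the bound \(2n\,\overline S_\alpha V_\alpha\sup_X|h|\); and the extension to \(\mathcal E^1(X,\omega)\) by \(d_1\)-density of \(\Hpot_\omega\) and \(d_1\)-continuity of the smooth linear term.
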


\begin{proof}
Since \(\eta_1,\eta_2\in\alpha\), the \(\partial \bar\partial\)-lemma gives a smooth real-valued function \(u\), unique up to an additive constant, such that \(\eta_1-\eta_2=\ddbar u\). Fix any normalization of \(u\), and set
\[
        T
        :=
        \sum_{j=0}^{n-1}
        \eta_1^j\wedge\eta_2^{n-1-j}.
\]
Then \(T\) is a smooth closed positive
\((n-1,n-1)\)-form and
\[
        \eta_1^n-\eta_2^n
        =
        \ddbar u\wedge T.
\]
We first prove the estimate for \(\phi\in\Hpot_\omega\). By \eqref{eq:eta-target} and integration by parts,
\begin{align*}
        \int_X
        \phi\,
        (\Omega_1-\Omega_2)
        &=
        \overline S_\alpha
        \int_X
        \phi\,
        \ddbar u\wedge T                                  \\
        &=
        \overline S_\alpha
        \int_X
        u\,
        \ddbar\phi\wedge T                                 \\
        &=
        \overline S_\alpha
        \int_X
        u\,
        (\omega_\phi-\omega)\wedge T.
\end{align*}
Since \([T] = n\,\alpha^{n-1}\), we have
\[
        \int_X
        \omega_\phi\wedge T
        =
        \int_X
        \omega\wedge T
        =
        n\,V_\alpha.
\]
The positivity of \(\omega_\phi\), \(\omega\), and \(T\) therefore gives
\begin{align*}
        \left|
        \int_X
        \phi\,
        (\Omega_1-\Omega_2)
        \right|
        &\leq
        \overline S_\alpha
        \lVert u\rVert_{C^0}
        \left(
        \int_X
        \omega_\phi\wedge T
        +
        \int_X
        \omega\wedge T
        \right)                                           \\
        &=
        2n\,\overline S_\alpha V_\alpha
        \lVert u\rVert_{C^0}.
\end{align*}
This proves \eqref{eq:bounded-target-term} on \(\Hpot_\omega\).

The functional
\[
        \phi
        \longmapsto
        \int_X
        \phi\,
        (\Omega_1-\Omega_2)
\]
is \(d_1\)-continuous on \(\mathcal E^1(X,\omega)\), while \(\Hpot_\omega\) is \(d_1\)-dense in \(\mathcal E^1(X,\omega)\). Hence the same estimate extends to \(\mathcal E^1(X,\omega)\).
\end{proof}

\begin{corollary}
\label{cor:target-independent-coercivity}
Let \(\Omega_1,\Omega_2\in\mathcal V_\alpha^+\). Then:

\begin{enumerate}
\item
\(\mathcal F_{\omega,\Omega_1}\) is \(d_1\)-coercive on
\(\mathcal E^1_0(X,\omega)\) if and only if
\(\mathcal F_{\omega,\Omega_2}\) is \(d_1\)-coercive on
\(\mathcal E^1_0(X,\omega)\).

\item
For every finite-energy \(d_1\)-geodesic ray \(\ell:[0,\infty)\rightarrow\mathcal E^1_0(X,\omega)\), provided that \(\mathcal F_{\omega,\Omega_i}(\ell_0)<+\infty\) for \(i\in\{1,2\}\), one has \( \mathcal F_{\omega,\Omega_1}\{\ell\}=\mathcal F_{\omega,\Omega_2}\{\ell\}\).
\end{enumerate}
\end{corollary}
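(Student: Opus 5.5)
The key observation is that the two functionals differ only by the target linear term. For every \(\phi\in\mathcal E^1(X,\omega)\), the definition \eqref{eq:prescribed-functional} together with the extension to \(\mathcal E^1(X,\omega)\) gives
\[
\mathcal F_{\omega,\Omega_1}(\phi)-\mathcal F_{\omega,\Omega_2}(\phi)=\int_X\phi\,(\Omega_1-\Omega_2).
\]
Here the terms \(\mathcal M_\omega\) and \(-\overline S_\alpha E_\omega\) cancel identically. In particular \(\mathcal F_{\omega,\Omega_1}(\phi)=+\infty\) if and only if \(\mathcal F_{\omega,\Omega_2}(\phi)=+\infty\), and the difference is a finite real number. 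By Proposition~\ref{prop:bounded-target-difference}, its absolute value is bounded by the constant \(C=C(\alpha,\Omega_1,\Omega_2)\), uniformly in \(\phi\). Both parts of the corollary will follow from this single uniform bound, so no further analysis is needed.

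For part (1), suppose \(\mathcal F_{\omega,\Omega_1}(\phi)\ge\delta\,d_1(0,\phi)-C_1\) for all \(\phi\in\mathcal E^1_0(X,\omega)\). Then
\[
\mathcal F_{\omega,\Omega_2}(\phi)\ge\mathcal F_{\omega,\Omega_1}(\phi)-C\ge\delta\,d_1(0,\phi)-(C_1+C).
\]
This holds trivially when the values are \(+\infty\). Hence \(\mathcal F_{\omega,\Omega_2}\) is \(d_1\)-coercive with the same \(\delta\), and the converse follows by exchanging the roles of \(\Omega_1\) and \(\Omega_2\).

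For part (2), fix a ray \(\ell\) with \(\mathcal F_{\omega,\Omega_i}(\ell_0)<+\infty\). The radial slopes exist in \((-\infty,+\infty]\) by the convexity discussion preceding Proposition~\ref{prop:coercivity-implies-radial-bound}, and each equals \(\lim_{t\to\infty}\mathcal F_{\omega,\Omega_i}(\ell_t)/t\). For every \(t>0\),
\[
\left|\frac{\mathcal F_{\omega,\Omega_1}(\ell_t)}{t}-\frac{\mathcal F_{\omega,\Omega_2}(\ell_t)}{t}\right|\le\frac{C}{t}
\]
whenever both values are finite, and both are simultaneously \(+\infty\) otherwise. Letting \(t\to\infty\) shows the two limits coincide in \((-\infty,+\infty]\). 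If one slope is \(+\infty\), the bound by \(C/t\) forces the other to be \(+\infty\) as well.

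The only delicate point is handling the value \(+\infty\) correctly: the Mabuchi term can be infinite on \(\mathcal E^1\), so one cannot subtract the functionals naively. I will deal with this by writing the comparison as an inequality between extended reals. This is legitimate because \(L_{\omega,\Omega_1-\Omega_2}\) is finite on all of \(\mathcal E^1(X,\omega)\) by Lemma~\ref{lem:smooth-linear-term-d1-continuity}.
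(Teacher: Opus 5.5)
Your proposal is correct and follows the paper's proof: both rest on the identity \(\mathcal F_{\omega,\Omega_1}-\mathcal F_{\omega,\Omega_2}=\int_X\phi\,(\Omega_1-\Omega_2)\) and the uniform bound of Proposition~\ref{prop:bounded-target-difference}, which gives part (1) at once and part (2) because the two slopes differ by \(O(1/t)\). The only cosmetic difference is that the paper compares the quotients \(\bigl(\mathcal F(\ell_t)-\mathcal F(\ell_0)\bigr)/t\) and gets \(2C/t\), while you compare \(\mathcal F(\ell_t)/t\) directly and get \(C/t\); these agree because \(\mathcal F_{\omega,\Omega_i}(\ell_0)\) is finite.
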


\begin{proof}
For every \(\phi\in\mathcal E^1(X,\omega)\),
\[
        \mathcal F_{\omega,\Omega_1}(\phi)
        -
        \mathcal F_{\omega,\Omega_2}(\phi)
        =
        \int_X\phi\,(\Omega_1-\Omega_2).
\]
By Proposition~\ref{prop:bounded-target-difference}, the difference between the two functionals is uniformly bounded on \(\mathcal E^1(X,\omega)\). The first assertion follows immediately by restricting to \(\mathcal E^1_0(X,\omega)\).

The same boundedness also shows that
\[
        \mathcal F_{\omega,\Omega_1}(\ell_0)<+\infty
        \quad\Longleftrightarrow\quad
        \mathcal F_{\omega,\Omega_2}(\ell_0)<+\infty.
\]
Moreover, for some \(C>0\),
\[
\begin{aligned}
\left|
        \frac{
        \mathcal F_{\omega,\Omega_1}(\ell_t)
        -
        \mathcal F_{\omega,\Omega_1}(\ell_0)
        }{t}
        -
        \frac{
        \mathcal F_{\omega,\Omega_2}(\ell_t)
        -
        \mathcal F_{\omega,\Omega_2}(\ell_0)
        }{t}
\right|
        \leq
        \frac{2C}{t}.
\end{aligned}
\]
Letting \(t\to\infty\) proves the second assertion.
\end{proof}

Combining Corollaries~\ref{cor:reference-independent}
and~\ref{cor:target-independent-coercivity}, we conclude that the \(d_1\)-coercivity of \(\mathcal F_{\omega,\Omega}\) is independent of the choice of reference metric and admissible measure. We therefore say that the \(\mathcal F\)-functional associated with \(\alpha\) is \(d_1\)-coercive if \(\mathcal F_{\omega,\Omega}\) is \(d_1\)-coercive for one,
equivalently every, choice of \(\omega\in\alpha\) and 
\(\Omega\in\mathcal V_\alpha^+\).

We now turn to radial slopes. Two finite-energy \(d_1\)-geodesic rays \(\ell\) and \(\widetilde\ell\) are called \emph{parallel} if
\[
        \sup_{t\geq0}
        d_1(\ell_t,\widetilde\ell_t)
        <
        +\infty.
\]
By \cite[Proposition~4.1]{DL2020}, for every
\(u,v\in\mathcal E^1(X,\omega)\) and every finite-energy
\(d_1\)-geodesic ray starting from \(u\), there exists a unique parallel ray starting from \(v\). Moreover,
\[
        d_1(\ell_t,\widetilde\ell_t)
        \leq
        d_1(u,v),
        \qquad
        t\geq0.
\]
\begin{lemma}
\label{lem:base-point-invariance-radial-slopes}
Let \(\ell,\widetilde\ell:[0,\infty)\longrightarrow\mathcal E^1(X,\omega)\) be parallel finite-energy \(d_1\)-geodesic rays. Assume that
\(\mathcal F_{\omega,\Omega}(\ell_0)<+\infty\) and
\( \mathcal F_{\omega,\Omega}(\widetilde\ell_0)<+\infty\).
Then the two rays have the same constant \(d_1\)-speed and
\[
        \mathcal F_{\omega,\Omega}\{\ell\}
        =
        \mathcal F_{\omega,\Omega}\{\widetilde\ell\}.
\]
\end{lemma}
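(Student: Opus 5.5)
The plan has two parts: first show that the two rays have equal speed, then show that their radial slopes coincide. Both rest on the distance bound for parallel rays, \(d_1(\ell_t,\widetilde\ell_t)\leq D\) for all \(t\geq0\), where \(D:=\sup_{t\ge0}d_1(\ell_t,\widetilde\ell_t)<\infty\). By \cite[Proposition~4.1]{DL2020} we may in fact take \(D=d_1(\ell_0,\widetilde\ell_0)\).

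\emph{Equal speeds.} By the triangle inequality,
\[
t\,v_1(\ell)=d_1(\ell_0,\ell_t)\leq d_1(\ell_0,\widetilde\ell_0)+d_1(\widetilde\ell_0,\widetilde\ell_t)+d_1(\widetilde\ell_t,\ell_t)\leq t\,v_1(\widetilde\ell)+2D.
\]
Dividing by \(t\) and letting \(t\to\infty\) gives \(v_1(\ell)\leq v_1(\widetilde\ell)\). Exchanging the roles of the two rays gives the reverse inequality.

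\emph{Equal slopes.} By symmetry it suffices to show \(\mathcal F_{\omega,\Omega}\{\widetilde\ell\}\leq\mathcal F_{\omega,\Omega}\{\ell\}\). If the right side is \(+\infty\) there is nothing to prove, so assume \(\mathcal F_{\omega,\Omega}\{\ell\}<\infty\). Then \(\mathcal F_{\omega,\Omega}(\ell_t)\le \mathcal F_{\omega,\Omega}(\ell_0)+t\,\mathcal F_{\omega,\Omega}\{\ell\}\) is finite for every \(t\).

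Fix \(T>0\). For \(t>T\), let \(\gamma^t:[0,1]\to\mathcal E^1(X,\omega)\) be the finite-energy geodesic segment from \(\widetilde\ell_0\) to \(\ell_t\). Convexity of \(\mathcal F_{\omega,\Omega}\) along finite-energy geodesics (Section~\ref{subsec:finite-energy-stability}) gives
\[
\mathcal F_{\omega,\Omega}(\gamma^t_{T/t})\leq (1-T/t)\,\mathcal F_{\omega,\Omega}(\widetilde\ell_0)+(T/t)\,\mathcal F_{\omega,\Omega}(\ell_t).
\]
Now compare \(\gamma^t\) with the segment \(s\mapsto \ell_{st}\) from \(\ell_0\) to \(\ell_t\). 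The \(d_1\)-geodesics in \(\mathcal E^1\) constructed as envelopes satisfy the Busemann-type convexity estimate of Berman--Darvas--Lu. Since the two segments share the endpoint \(\ell_t\), this estimate yields
\[
d_1(\gamma^t_s,\ell_{st})\leq (1-s)\,d_1(\widetilde\ell_0,\ell_0).
\]
Hence \(\gamma^t_{T/t}\to\ell_T\) in \(d_1\) as \(t\to\infty\). In fact, \(\gamma^t_{T/t}\) converges to the parallel-ray construction at time \(T\), which is precisely how \cite{DL2020} builds \(\widetilde\ell\). So I would rather use the segment from \(\ell_0\) to \(\widetilde\ell_t\), which gives \(\gamma^t_{T/t}\to\widetilde\ell_T\) by the same comparison.

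\emph{Passing to the limit.} Applying lower semicontinuity of \(\mathcal F_{\omega,\Omega}\) to this swapped construction and letting \(t\to\infty\) gives
\[
\mathcal F_{\omega,\Omega}(\widetilde\ell_T)\leq \mathcal F_{\omega,\Omega}(\ell_0)+T\,\liminf_{t\to\infty}\frac{\mathcal F_{\omega,\Omega}(\widetilde\ell_t)}{t},
\]
which is an inequality in the wrong direction. The correct choice is therefore the segment from \(\widetilde\ell_0\) to \(\ell_t\), together with \(d_1\)-convergence of \(\gamma^t_{T/t}\) to \(\widetilde\ell_T\). That convergence follows from the uniqueness of parallel rays in \cite[Proposition~4.1]{DL2020}: the limit of these segments is a ray from \(\widetilde\ell_0\) parallel to \(\ell\). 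With this, lower semicontinuity gives
\[
\mathcal F_{\omega,\Omega}(\widetilde\ell_T)\leq\liminf_{t\to\infty}\mathcal F_{\omega,\Omega}(\gamma^t_{T/t})\leq \mathcal F_{\omega,\Omega}(\widetilde\ell_0)+T\,\mathcal F_{\omega,\Omega}\{\ell\},
\]
using \(\mathcal F_{\omega,\Omega}(\ell_t)/t\to\mathcal F_{\omega,\Omega}\{\ell\}\). Dividing by \(T\) and letting \(T\to\infty\) yields \(\mathcal F_{\omega,\Omega}\{\widetilde\ell\}\le\mathcal F_{\omega,\Omega}\{\ell\}\), and symmetry gives equality.

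\emph{Main obstacle.} The hard step is justifying \(\gamma^t_{T/t}\to\widetilde\ell_T\) in \(d_1\). I would first try to cite the construction in \cite{DL2020}, where the parallel ray is obtained as exactly this limit of chords. Failing that, I would derive it from the Busemann convexity inequality for finite-energy geodesics of Berman--Darvas--Lu together with the uniqueness statement of \cite[Proposition~4.1]{DL2020}.
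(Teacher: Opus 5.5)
Your overall route is the paper's: equal speeds from the triangle inequality, and for the slopes the chord $\gamma^t$ from $\widetilde\ell_0$ to $\ell_t$, convexity at $\gamma^t_{T/t}$, lower semicontinuity as $t\to\infty$, then division by $T$. The step you flag, however, is not established by what you wrote.

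\emph{The false intermediate claim.} The Busemann estimate $d_1(\gamma^t_s,\ell_{st})\le(1-s)\,d_1(\widetilde\ell_0,\ell_0)$ does not give $\gamma^t_{T/t}\to\ell_T$. At $s=T/t$ it only bounds $d_1(\gamma^t_{T/t},\ell_T)$ by $d_1(\widetilde\ell_0,\ell_0)$. By the same token, the swapped chords from $\ell_0$ to $\widetilde\ell_t$ would approach $\ell_T$, not $\widetilde\ell_T$.

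\emph{The missing existence step.} More importantly, uniqueness of parallel rays can only \emph{identify} a limit of the chords. It does not \emph{produce} one: bounded $d_1$-distance to $\ell$ gives neither a Cauchy property nor compactness.

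The paper supplies existence by reducing to ordered initial points. Suppose, say, $\widetilde\ell_0\le\ell_0$. The comparison principle first gives $\gamma^{t'}_{\tau/t'}\le\ell_\tau$, and then $\gamma^{t'}_{\tau/t'}\le\gamma^t_{\tau/t}$ for $t'\ge t\ge\tau$. So the chords are monotone in $t$ and, being $d_1$-bounded, converge in $d_1$. This is the construction of the parallel ray in \cite[Proposition~4.1]{DL2020}. The unordered case is handled by inserting the parallel ray based at a common smooth upper bound of $\ell_0$ and $\widetilde\ell_0$. Smoothness ensures $\mathcal F_{\omega,\Omega}$ is finite at the new base point, so the ordered case applies twice.

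Alternatively, you can keep your unordered chords and argue by compactness. For $\tau\le T$, convexity together with $\mathcal F_{\omega,\Omega}\{\ell\}<\infty$ bounds $\mathcal F_{\omega,\Omega}(\gamma^t_{\tau/t})$ from above uniformly in $t$. The $d_1$-distances are also bounded, so $\mathcal M_\omega=\mathcal F_{\omega,\Omega}+\overline S_\alpha E_\omega-L_{\omega,\Omega}$ is bounded above on these points. Then \cite[Corollary~4.8]{BDL2017}, Arzel\`a--Ascoli and \cite[Proposition~4.3]{BDL2017} give subsequential limits that are finite-energy rays from $\widetilde\ell_0$. By your Busemann estimate these stay within $d_1(\widetilde\ell_0,\ell_0)$ of $\ell$. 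Only at this point does uniqueness identify every such limit with $\widetilde\ell$, forcing convergence of the whole family. With either repair your argument is complete.
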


\begin{proof}
The triangle inequality gives
\[
\left|
        d_1(\ell_0,\ell_t)
        -
        d_1(\widetilde\ell_0,\widetilde\ell_t)
\right|
\leq
        d_1(\ell_0,\widetilde\ell_0)+ \sup_{t\geq0}
        d_1(\ell_t,\widetilde\ell_t).
\]
Since both rays have constant speed, dividing by \(t\) and letting \(t\to\infty\) shows that \( v_1(\ell)=v_1(\widetilde\ell)\).

For the radial slopes, we apply the argument of
\cite[Lemma~4.10]{DL2020}. That argument uses only geodesic convexity and \(d_1\)-lower semicontinuity of the functional, both of which hold for \(\mathcal F_{\omega,\Omega}\).
More precisely, by the reduction used in the proof of
\cite[Proposition~4.1]{DL2020}, it is enough first to consider the case in which the two initial points are ordered. For \(T>s>0\), let \([0,T]\ni r\mapsto\gamma_r^T\) be the finite-energy geodesic segment joining \(\widetilde\ell_0\) to \(\ell_T\). Then
\(\gamma_s^T \rightarrow \widetilde\ell_s\) in \(d_1\) as \(T\to\infty\). Geodesic convexity then gives
\[
        \mathcal F_{\omega,\Omega}(\gamma_s^T)
        \leq
        \left(
        1-\frac{s}{T}
        \right)
        \mathcal F_{\omega,\Omega}(\widetilde\ell_0)
        +
        \frac{s}{T}
        \mathcal F_{\omega,\Omega}(\ell_T).
\]
Using \(d_1\)-lower semicontinuity and letting \(T\to\infty\), we obtain
\[
        \mathcal F_{\omega,\Omega}(\widetilde\ell_s)
        -
        \mathcal F_{\omega,\Omega}(\widetilde\ell_0)
        \leq
        s\,\mathcal F_{\omega,\Omega}\{\ell\}.
\]
Dividing by \(s\) and letting \(s\to\infty\) gives
\[
        \mathcal F_{\omega,\Omega}\{\widetilde\ell\}
        \leq
        \mathcal F_{\omega,\Omega}\{\ell\}.
\]
The general case follows by inserting a parallel ray based at a common smooth upper bound of \(\ell_0\) and \(\widetilde\ell_0\), exactly as in the proof of \cite[Proposition~4.1 and Lemma~4.10]{DL2020}. Interchanging the two rays gives the reverse inequality.
\end{proof}

We now verify that the radial slope is independent of the reference metric and admissible measure. Let
\[\eta=\omega+\ddbar\psi,
        \qquad
        \widehat\psi
        =
        \psi-\frac{E_\omega(\psi)}{V_\alpha},
\]
and let \(\ell^\eta:[0,\infty)\rightarrow \mathcal E^1_0(X,\eta)\)
be a finite-energy \(d_1\)-geodesic ray starting from \(0\). Under the normalized change of reference coordinates, set
\[
        \ell'_t
        :=
        \ell^\eta_t+\widehat\psi.
\]
Then \(\ell'\) is a finite-energy \(d_1\)-geodesic ray in
\(\mathcal E^1_0(X,\omega)\), starting from \(\widehat\psi\), and it has the same constant \(d_1\)-speed as \(\ell^\eta\). By Proposition~\ref{prop:change-reference-metric},
\[
        \mathcal F_{\eta,\Omega}\{\ell^\eta\}
        =
        \mathcal F_{\omega,\Omega}\{\ell'\}.
\]

Let \(\widetilde\ell\) be the unique ray starting from
\(0\in\mathcal E^1(X,\omega)\) and parallel to \(\ell'\). We claim that \(\widetilde\ell\) remains in \(\mathcal E^1_0(X,\omega)\). Indeed, \(E_\omega\) is affine along finite-energy geodesics and satisfies
\[
        |E_\omega(u)-E_\omega(v)|
        \leq
        V_\alpha d_1(u,v).
\]
Since \(E_\omega(\ell'_t)=0\) and the two rays remain at uniformly bounded \(d_1\)-distance, the affine function \(t\mapsto E_\omega(\widetilde\ell_t)\) is bounded. Since it vanishes at \(t=0\), it must vanish identically. Thus
\[
        \widetilde\ell_t\in\mathcal E^1_0(X,\omega),
        \qquad
        t\geq0.
\]

Lemma~\ref{lem:base-point-invariance-radial-slopes} now gives
\[
        \mathcal F_{\omega,\Omega}\{\ell'\}
        =
        \mathcal F_{\omega,\Omega}\{\widetilde\ell\},
        \qquad
        v_1(\ell^\eta)
        =
        v_1(\widetilde\ell).
\]
Together with Corollary~\ref{cor:target-independent-coercivity}, this shows that, after replacing a ray by the corresponding parallel ray based at the origin, its radial slope and constant \(d_1\)-speed are independent of the reference metric, the admissible measure, and the base point. We therefore write
\begin{equation}
        \mathcal F_\alpha\{\ell\}
        :=
        \mathcal F_{\omega,\Omega}\{\ell\}.
        \label{eq:target-independent-radial-functional}
\end{equation}
Here rays based at the origins of different normalized potential spaces are identified by the normalized change of reference coordinates followed by parallelism.

\begin{definition}[Uniform geodesic stability]
\label{def:uniform-geodesic-stability}
We say that the \(\mathcal F\)-functional associated with
\(\alpha\) is \emph{uniformly geodesically stable} if there exists a constant \(\delta>0\) such that for any reference metric \(\omega\in\alpha\),
\[
        \mathcal F_\alpha\{\ell\}
        \geq
        \delta\,v_1(\ell)
\]
for every nonconstant finite-energy \(d_1\)-geodesic ray
\(\ell:[0,\infty)\rightarrow\mathcal E^1_0(X,\omega)\)
starting from \(0\) and parameterized with constant \(d_1\)-speed.
\end{definition}

\begin{corollary}
\label{cor:analytic-PSC-stability-implies-radial-stability}
If the \(\mathcal F\)-functional associated with \(\alpha\) is \(d_1\)-coercive, then it is uniformly geodesically stable.
\end{corollary}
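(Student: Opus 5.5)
The plan is to reduce the statement to Proposition~\ref{prop:coercivity-implies-radial-bound}, and then to use the independence results of Section~\ref{subsec:target-independence} so that a single constant \(\delta\) works for every reference metric, as Definition~\ref{def:uniform-geodesic-stability} requires.

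\emph{Step 1: fix the data.} Assume the \(\mathcal F\)-functional associated with \(\alpha\) is \(d_1\)-coercive. Fix one reference metric \(\omega\in\alpha\) and one admissible measure \(\Omega\in\mathcal V_\alpha^+\). Then there are constants \(\delta>0\) and \(C>0\) with \(\mathcal F_{\omega,\Omega}\ge\delta\,d_1(0,\cdot)-C\) on \(\mathcal E^1_0(X,\omega)\).

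\emph{Step 2: the case of the fixed reference metric.} Let \(\ell\) be a nonconstant ray in \(\mathcal E^1_0(X,\omega)\) starting from \(0\) with constant \(d_1\)-speed. Since \(\mathcal F_{\omega,\Omega}(0)=0<+\infty\), Proposition~\ref{prop:coercivity-implies-radial-bound} applies directly and gives
\[
\mathcal F_{\omega,\Omega}\{\ell\}\ge\delta\,v_1(\ell).
\]
By \eqref{eq:target-independent-radial-functional}, the left-hand side equals \(\mathcal F_\alpha\{\ell\}\).

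\emph{Step 3: an arbitrary reference metric.} Let \(\eta=\omega+\ddbar\psi\) be another reference metric and let \(\ell^\eta\) be a ray in \(\mathcal E^1_0(X,\eta)\) from \(0\). I would not re-establish coercivity of \(\mathcal F_{\eta,\Omega}\) through Corollary~\ref{cor:reference-independent}, because that route yields a possibly different constant \(C'\). The point is that \(\delta\) itself is unchanged, so the clean route is instead to transport the ray:
\begin{enumerate}
\item Form \(\ell'_t=\ell^\eta_t+\widehat\psi\). This is a ray in \(\mathcal E^1_0(X,\omega)\) with the same speed, and by Proposition~\ref{prop:change-reference-metric} it satisfies \(\mathcal F_{\eta,\Omega}\{\ell^\eta\}=\mathcal F_{\omega,\Omega}\{\ell'\}\).
\item Note that \(\mathcal F_{\omega,\Omega}(\widehat\psi)<\infty\) because \(\widehat\psi\) is smooth. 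Proposition~\ref{prop:coercivity-implies-radial-bound} allows an arbitrary base point \(\ell_0\), so it applies directly to \(\ell'\) and gives
\[
\mathcal F_{\omega,\Omega}\{\ell'\}\ge\delta\,v_1(\ell')=\delta\,v_1(\ell^\eta).
\]
\end{enumerate}
Alternatively, one can pass to the parallel ray \(\widetilde\ell\) from \(0\) and use Lemma~\ref{lem:base-point-invariance-radial-slopes}. Either way we obtain \(\mathcal F_\alpha\{\ell^\eta\}\ge\delta\,v_1(\ell^\eta)\) with the same \(\delta\).

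\emph{Main obstacle.} The only delicate point is the uniformity of \(\delta\) across reference metrics. It is resolved by the observation in Step 3 that the \(\delta\) in Proposition~\ref{prop:coercivity-implies-radial-bound} does not depend on the base point, combined with the isometric change of reference coordinates. Independence of the admissible measure is already covered by Corollary~\ref{cor:target-independent-coercivity}(2).
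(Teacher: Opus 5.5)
Your proposal is correct and follows the paper's proof. The paper likewise fixes \(\omega,\Omega\), applies Proposition~\ref{prop:coercivity-implies-radial-bound}, and uses the identification \eqref{eq:target-independent-radial-functional}; your Step~3 spells out the transfer to other reference metrics that the paper leaves to that identification, and applying the proposition at the base point \(\widehat\psi\) neatly avoids the parallel-ray lemma. Your stated reason for avoiding Corollary~\ref{cor:reference-independent} is unfounded, though: that corollary keeps the same \(\delta\), and the constant \(C'\) drops out of radial slopes, so that route would work equally well.
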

\begin{proof}
Fix any \(\omega\in\alpha\) and \(\Omega\in\vap\).
Since \(\mathcal F_{\omega,\Omega}\) is \(d_1\)-coercive on \(\mathcal E^1_0(X,\omega)\), Proposition~\ref{prop:coercivity-implies-radial-bound} gives a constant \(\delta>0\) such that
\[
        \mathcal F_{\omega,\Omega}\{\ell\}
        \geq
        \delta\,v_1(\ell)
\]
for every nonconstant finite-energy \(d_1\)-geodesic ray in \(\mathcal E^1_0(X,\omega)\) starting from \(0\).
By \eqref{eq:target-independent-radial-functional}, the left-hand side equals \(\mathcal F_\alpha\{\ell\}\), which proves the claim.
\end{proof}


\subsection{The continuity path}
\label{subsec:continuity-path}

In this subsection, we introduce the modified path. Let \(\phi_0\in\Hpot_\omega\) be the unique normalized solution of the Monge--Amp\`ere equation
\begin{equation*}
        \eta^n
        =
        \frac{\Omega}{\overline S_\alpha},
        \qquad
        \eta
        :=
        \omega_{\phi_0},
        \qquad
        E_\omega(\phi_0)
        =
        0.
        \label{eq:Yau-initial-metric}
\end{equation*}
Choose \(0<\varepsilon_0<1\) sufficiently small so that
\begin{equation}
        \varepsilon_0
        \left\lVert
        \Scal(\eta)-\overline S_\alpha
        \right\rVert_{C^0(X)}
        <
        \frac{\overline S_\alpha}{2}.
        \label{eq:epsilon-zero-choice}
\end{equation}
Such a choice is possible because \(\overline S_\alpha>0\). Define 
\[
        q_0
        :=
        \varepsilon_0\Scal(\eta)
        +
        (1-\varepsilon_0)\overline S_\alpha
        =
        \overline S_\alpha
        +
        \varepsilon_0
        \bigl(
        \Scal(\eta)-\overline S_\alpha
        \bigr),
\]
and set
\begin{equation}
        \Omega_0:=q_0\,\eta^n .
        \label{eq:Omega-zero-definition}
\end{equation}
By \eqref{eq:epsilon-zero-choice},
\(q_0 \geq
        \frac12 \overline S_\alpha
        >
        0\),
so \(\Omega_0\) is a smooth positive volume form. Moreover,
\begin{align*}
        \int_X
        \Omega_0
        &=
        \varepsilon_0
        \int_X
        \Scal(\eta)\,\eta^n
        +
        (1-\varepsilon_0)\overline S_\alpha
        \int_X
        \eta^n                                                \\
        &=
        \varepsilon_0S_\alpha
        +
        (1-\varepsilon_0)\overline S_\alpha V_\alpha            \\
        &=
        S_\alpha.
\end{align*}
Hence \(\Omega_0\in\mathcal V_\alpha^+\). For \(t\in[0,1]\), define
\[
        a_t
        :=
        \varepsilon_0+(1-\varepsilon_0)t,
        \qquad
        \Omega_t
        :=
        (1-t)\Omega_0+t\,\Omega .
\]
Then \(\varepsilon_0 \leq a_t\leq1\) and \(\Omega_t \in \mathcal V_\alpha^+\) for every \(t\in[0,1]\).

Define the modified path functional
\begin{equation}
        \mathcal G_t
        :=
        a_t\mathcal M_\omega
        -
        \overline S_\alpha E_\omega
        +
        L_{\omega,\Omega_t}
        \label{eq:modified-path-functional}
\end{equation}
with the first variational formula
\begin{equation}
        D\mathcal G_t(\phi)[u]
        =
        -\int_X
        u\,
        \left[
        \left(
        a_t\Scal(\omega_\phi)
        +
        (1-a_t)\overline S_\alpha
        \right)
        \omega_\phi^n
        -
        \Omega_t
        \right].
        \label{eq:modified-path-differential}
\end{equation}
Consequently, the Euler--Lagrange equation of
\(\mathcal G_t\) is
\begin{equation}
        \left(
        a_t\Scal(\omega_\phi)
        +
        (1-a_t)\overline S_\alpha
        \right)
        \omega_\phi^n
        =
        \Omega_t.
        \label{eq:modified-scalar-measure-path}
\end{equation}
At \(t=0\), one has \(a_0=\varepsilon_0\), and by
\eqref{eq:Omega-zero-definition} the previously chosen potential \(\phi_0\) solves
\[
        \left(
        \varepsilon_0\Scal(\eta)
        +
        (1-\varepsilon_0)\overline S_\alpha
        \right)
        \eta^n
        =
        \Omega_0.
\]
At \(t=1\), one has \(a_1 =1\),m\(\Omega_1=\Omega\), and \eqref{eq:modified-scalar-measure-path} reduces to the
equation
\[
        \Scal(\omega_\phi)\,\omega_\phi^n
        =
        \Omega.
\]

Define \(\mathcal R:[0,1]\times\Hpot_\omega\rightarrow C^\infty\bigl(X,\Lambda_{\mathbb R}^{n,n}\bigr)\)
by
\[
        \mathcal R(t,\phi)
        :=
        \left(
        a_t\Scal(\omega_\phi)
        +
        (1-a_t)\overline S_\alpha
        \right)
        \omega_\phi^n
        -
        \Omega_t.
\]
For fixed \(t\), define the linearized operator
\(P_{t,\phi}\) by
\[
        D_\phi\mathcal R(t,\phi)[u]
        =
        P_{t,\phi}u\,\omega_\phi^n.
\]
Using \eqref{eq:P-operator}, we obtain
\begin{equation*}
        P_{t,\phi}u
        =
        a_tP_{\omega_\phi}u
        +
        (1-a_t)\overline S_\alpha
        \Delta_{\omega_\phi}u.
\end{equation*}
Since \(a_t\geq\varepsilon_0>0\), the operator
\(P_{t,\phi}\) is fourth-order elliptic for every  \(t\in[0,1]\), with principal part \( -a_t\Delta_{\omega_\phi}^2\).

Differentiating \eqref{eq:modified-path-differential} in the potential variable gives
\begin{equation}
        D^2\mathcal G_t(\phi)[u,v]
        =
        -\int_X
        v\,P_{t,\phi}u\,\omega_\phi^n.
        \label{eq:modified-path-second-differential}
\end{equation}
The symmetry of the second differential implies that
\(P_{t,\phi}\) is formally self-adjoint with respect to
\(\omega_\phi^n\). If \(\phi\) solves \eqref{eq:modified-scalar-measure-path}, the same computation as in Proposition~\ref{prop:automatic-nondegeneracy} gives
\begin{equation*}
        -\int_X
        u\,P_{t,\phi}u\,\omega_\phi^n
        =
        a_t
        \int_X
        \left|
        \bar\partial\nabla_{\omega_\phi}^{1,0}u
        \right|_{\omega_\phi}^2
        \,\omega_\phi^n                                      
        +
        \int_X
        \lvert\partial u\rvert_{\omega_\phi}^2
        \,\Omega_t.
        \label{eq:modified-path-quadratic-form}
\end{equation*}
In particular, since \(\Omega_t>0\), \(\ker P_{t,\phi}= \mathbb R\) at every smooth solution.

\begin{proposition}
\label{prop:modified-path-openness}
Let \(t_*\in[0,1]\), and suppose that \(\phi_*\in\Hpot_\omega\) solves \eqref{eq:modified-scalar-measure-path} at \(t=t_*\). Then
\[
        P_{t_*,\phi_*}:
        C^{4,\gamma}_0(X,\omega_{\phi_*})
        \longrightarrow
        C^{0,\gamma}_0(X,\omega_{\phi_*})
\]
is an isomorphism. Consequently, the set
\begin{equation*}
        \mathcal T
        :=
        \left\{
        t\in[0,1]:
        \eqref{eq:modified-scalar-measure-path}
        \text{ admits a smooth solution}
        \right\}
\end{equation*}
is relatively open in \([0,1]\).
\end{proposition}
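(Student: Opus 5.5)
The plan has two parts: first show that the linearization at a solution is an isomorphism, then apply the implicit function theorem to the map \(\mathcal R\) in Hölder spaces.

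\textbf{Step 1: the isomorphism.} I follow the proof of Proposition~\ref{prop:automatic-nondegeneracy}. By \eqref{eq:modified-path-second-differential}, \(P_{t_*,\phi_*}\) is formally self-adjoint with respect to \(\omega_{\phi_*}^n\). It is fourth-order elliptic with principal part \(-a_{t_*}\Delta^2_{\omega_{\phi_*}}\), and \(a_{t_*}\geq\varepsilon_0>0\). So, as a map \(C^{4,\gamma}\to C^{0,\gamma}\), it is Fredholm of index zero.

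Its image lies in \(C^{0,\gamma}_0(X,\omega_{\phi_*})\). Indeed, \(\int_X P_{\omega_{\phi_*}}u\,\omega_{\phi_*}^n=0\) was shown earlier, and \(\int_X\Delta_{\omega_{\phi_*}}u\,\omega_{\phi_*}^n=0\).

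For the kernel I use the quadratic form identity stated just before the proposition. At a solution of \eqref{eq:modified-scalar-measure-path}, the term \((a_t\Scal+(1-a_t)\overline S_\alpha)\omega^n_{\phi}\) becomes \(\Omega_t\). This turns the \(|\partial u|^2\)-terms coming from \(a_tD^2\mathcal M_\omega-\overline S_\alpha D^2E_\omega\) into \(\int_X|\partial u|^2\,\Omega_t\). Since \(\Omega_t>0\), this identity gives \(\ker P_{t_*,\phi_*}=\mathbb R\).

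By self-adjointness, the cokernel is annihilated only by the constants. Hence the image is exactly \(C^{0,\gamma}_0\), and restricting the domain to mean-zero functions gives the isomorphism.

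\textbf{Step 2: setting up the implicit function theorem.} I consider \(\mathcal R\) as a \(C^1\) map
\[
[0,1]\times U\to\Bigl\{\,\beta\in C^{0,\gamma}(X,\Lambda^{n,n}_{\mathbb R}) : \int_X\beta=0\,\Bigr\},
\]
where \(U\) is a neighborhood of \(\phi_*\) in \(C^{4,\gamma}\). The target is correct because \(\int_X\Scal(\omega_\phi)\omega_\phi^n=S_\alpha\), \(\int_X\omega_\phi^n=V_\alpha\), and \(\int_X\Omega_t=S_\alpha\), so every value of \(\mathcal R\) has total integral zero.

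To fix the constants, I parametrize \(\phi=\phi_*+u\) with \(u\in C^{4,\gamma}_0(X,\omega_{\phi_*})\). Dividing by the fixed volume form \(\omega_{\phi_*}^n\) identifies the target with a space of functions. The partial derivative in \(u\) at \((t_*,0)\) is then \(u\mapsto P_{t_*,\phi_*}u\,\omega_{\phi_*}^n\). Under the identification this is the isomorphism from Step 1, because at \(u=0\) the form \(\omega_\phi^n\) coincides with \(\omega_{\phi_*}^n\).

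\textbf{Step 3: conclusion and regularity.} The map is \(C^1\) (indeed smooth) in \(t\), since \(a_t\) and \(\Omega_t\) are affine in \(t\). The implicit function theorem therefore gives, for every \(t\) near \(t_*\) in \([0,1]\), a solution \(\phi_t\in C^{4,\gamma}\) with \(\omega_{\phi_t}>0\).

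The remaining task, which I expect to be the only technical point, is to upgrade \(\phi_t\) to a smooth solution. I would rewrite \eqref{eq:modified-scalar-measure-path} as a coupled second-order system, in the spirit of Chen--Cheng:
\[
F=\log\frac{\omega_\phi^n}{\omega^n},\qquad
a_t\Delta_{\omega_\phi}F=a_t\operatorname{tr}_{\omega_\phi}\Ric(\omega)+(1-a_t)\overline S_\alpha-\frac{\Omega_t}{\omega_\phi^n}.
\]
The right-hand side of the second equation is \(C^{2,\gamma}\), so Schauder theory gives \(F\in C^{4,\gamma}\). The Monge--Amp\`ere equation then gives \(\phi\in C^{6,\gamma}\). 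Iterating this bootstrap yields smoothness.
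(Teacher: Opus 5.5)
Your proposal is correct and follows essentially the same route as the paper. You use the Fredholm alternative for the formally self-adjoint fourth-order operator \(P_{t_*,\phi_*}\), whose kernel is the constants because of the quadratic form with \(\Omega_{t_*}>0\); you then apply the implicit function theorem to \(\mathcal R(t,\phi_*+\psi)/\omega_{\phi_*}^n\) between mean-zero H\"older spaces, and finish with elliptic bootstrapping, where your explicit Schauder iteration through the coupled system for \(F=\log(\omega_\phi^n/\omega^n)\) simply spells out the step the paper states in one line.
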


\begin{proof}
Since \(P_{t_*,\phi_*}\) is formally self-adjoint and elliptic, the Fredholm alternative gives solvability of
\[
        P_{t_*,\phi_*}u=f
\]
for every \(f\in C^{0,\gamma}_0(X,\omega_{\phi_*})\), with \(u\) unique up to addition of a constant. Elliptic regularity then gives \(u\in C^{4,\gamma}(X,\mathbb R)\), and subtracting its \(\omega_{\phi_*}^n\)-average places it in \(C^{4,\gamma}_0(X,\omega_{\phi_*})\). Thus the restriction
\[
        P_{t_*,\phi_*}:
        C^{4,\gamma}_0(X,\omega_{\phi_*})
        \longrightarrow
        C^{0,\gamma}_0(X,\omega_{\phi_*})
\]
is an isomorphism.

To prove openness, set
\[
        \mathcal X_*
        :=
        C^{4,\gamma}_0(X,\omega_{\phi_*}),
        \qquad
        \mathcal Y_*
        :=
        C^{0,\gamma}_0(X,\omega_{\phi_*}).
\]
For \(\psi\in\mathcal X_*\) sufficiently small, define
\[
        \Psi(t,\psi)
        :=
        \frac{
        \mathcal R(t,\phi_*+\psi)
        }{
        \omega_{\phi_*}^n
        }.
\]
Indeed,
\[
\int_X\mathcal R(t,\phi)
=
a_tS_\alpha
+
(1-a_t)\overline S_\alpha V_\alpha
-
S_\alpha
=
0.
\]
Hence \(\Psi(t,\psi)\in Y_*\). Moreover, \(\Psi(t_*,0)
        =
        0\)
and
\[
        D_\psi\Psi(t_*,0)
        =
        P_{t_*,\phi_*}:
        \mathcal X_*
        \longrightarrow
        \mathcal Y_*.
\]
The Banach-space implicit function theorem therefore produces a unique local branch of \(C^{4,\gamma}\)-solutions through
\((t_*,\phi_*)\). Elliptic bootstrapping shows that these solutions are smooth. Hence \(\mathcal T\) is relatively open in \([0,1]\).
\end{proof}


\subsection{Entropy decomposition and uniform entropy bounds}
\label{subsec:entropy-decomposition}

In this subsection we give the uniform entropy bound needed for the a priori estimates in Section~\ref{sec:closedness}. We first recall the entropy term \(\Ent_\omega(\phi)\), which appears in the entropy decomposition of the Mabuchi \(K\)-energy \cite{Chen2000Mabuchi} (see also \cite{Tian1994}).

Let \(\phi\in\mathcal E^1(X,\omega)\). If \(\omega_\phi^n=f\,\omega^n\) for some nonnegative measurable function \(f\), then the entropy term is defined by
\begin{equation*}
        \Ent_\omega(\phi)
        =
        \int_X
        f\log f\,\omega^n
        =
        \int_X
        \log
        \left(
        \frac{\omega_\phi^n}{\omega^n}
        \right)
        \omega_\phi^n.
\end{equation*}
If \(\omega_\phi^n\) is not absolutely continuous with respect to \(\omega^n\), we set
\[
        \Ent_\omega(\phi)
        =
        +\infty.
\]

For a smooth closed real \((1,1)\)-form \(\theta\), the contracted Aubin--Yau energy is defined on \(\Hpot_\omega\) by
\begin{equation}
        E_\omega^\theta(\phi)
        :=
        \frac{1}{n}
        \sum_{j=0}^{n-1}
        \int_X
        \phi\,
        \theta\wedge
        \omega_\phi^j\wedge\omega^{n-1-j},
        \label{eq:contracted-AM-energy}
\end{equation}
which extends \(d_1\)-continuously to \(\mathcal E^1(X,\omega)\) (see \cite[Proposition 4.4]{BDL2017}). 

With all functionals normalized to vanish at the origin, the Mabuchi \(K\)-energy admits the entropy decomposition
\begin{equation}
        \mathcal M_\omega(\phi)
        =
        \Ent_\omega(\phi)
        +
        \overline S_\alpha E_\omega(\phi)
        -
        nE_\omega^{\Ric(\omega)}(\phi).
        \label{eq:Mabuchi-entropy-decomposition}
\end{equation}
For smooth potentials, this identity follows by differentiating both sides and comparing their values at the origin. The right-hand side defines the greatest \(d_1\) lower semicontinuous extension of the Mabuchi \(K\)-energy to \(\mathcal E^1(X,\omega)\) (see \cite[Theorem~4.7]{BDL2017}).

Substituting
\eqref{eq:Mabuchi-entropy-decomposition} into
\eqref{eq:prescribed-functional}, we obtain
\begin{equation}
        \mathcal F_{\omega,\Omega}(\phi)
        =
        \Ent_\omega(\phi)
        -
        nE_\omega^{\Ric(\omega)}(\phi)
        +
        L_{\omega,\Omega}(\phi).
        \label{eq:F-entropy-decomposition}
\end{equation}

Recall the modified path functional \eqref{eq:modified-path-functional}
\[
        \mathcal G_t
        =
        a_t\mathcal M_\omega
        -
        \overline S_\alpha E_\omega
        +
        L_{\omega,\Omega_t}.
\]
Using \eqref{eq:Mabuchi-entropy-decomposition}, we get
\begin{equation*}
        \mathcal G_t(\phi)
        =
        a_t\Ent_\omega(\phi)
        +
        (a_t-1)\overline S_\alpha E_\omega(\phi)        
        -
        na_tE_\omega^{\Ric(\omega)}(\phi)
        +
        L_{\omega,\Omega_t}(\phi).
\end{equation*}
With the normalization
\(E_\omega(\phi)=0\), we then have
\begin{equation}
        \mathcal G_t(\phi)
        =
        a_t\Ent_\omega(\phi)
        -
        na_tE_\omega^{\Ric(\omega)}(\phi)
        +
        L_{\omega,\Omega_t}(\phi).
        \label{eq:path-entropy-normalized}
\end{equation}
The essential point is that \(a_t\geq\varepsilon_0 >0\)
uniformly for \(t\in[0,1]\).

We next establish two auxiliary estimates.
\begin{lemma}
\label{lem:top-J-coercive}
There exist constants \(c>0\) and \(C>0\) such that
\[
        \mathcal J_{\omega,\Omega}(\phi)
        \geq
        c\,d_1(0,\phi)-C
\]
for every \(\phi\in\mathcal E^1_0(X,\omega)\).
\end{lemma}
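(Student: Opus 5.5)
On the slice \(\mathcal E^1_0(X,\omega)\) the energy term in \eqref{eq:top-degree-J} vanishes, so \(\mathcal J_{\omega,\Omega}(\phi)=L_{\omega,\Omega}(\phi)=\int_X\phi\,\Omega\). The plan is therefore to bound \(\int_X\phi\,\Omega\) from below by a multiple of \(d_1(0,\phi)\), up to an additive constant. I will do this in three steps: replace \(\Omega\) by a multiple of \(\omega^n\), bound \(\int_X\phi\,\omega^n\) below by \(\sup_X\phi\), and bound \(d_1(0,\phi)\) above by \(\sup_X\phi\).

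\textbf{Step 1: replace \(\Omega\) by \(\overline S_\alpha\omega^n\).} The form \(\overline S_\alpha\,\omega^n\) is smooth and positive with total mass \(\overline S_\alpha V_\alpha=S_\alpha\), so it lies in \(\mathcal V_\alpha^+\). Applying Proposition~\ref{prop:bounded-target-difference} with \(\Omega_1=\Omega\) and \(\Omega_2=\overline S_\alpha\omega^n\) gives
\[
\int_X\phi\,\Omega\ \ge\ \overline S_\alpha\int_X\phi\,\omega^n-C_1
\qquad\text{for all }\phi\in\mathcal E^1(X,\omega).
\]
It therefore suffices to prove \(\int_X\phi\,\omega^n\ge c'\,d_1(0,\phi)-C'\) on \(\mathcal E^1_0(X,\omega)\). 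The constants \(c=\overline S_\alpha c'>0\) and \(C=\overline S_\alpha C'+C_1\) then follow, and \(c>0\) uses \(S_\alpha>0\).

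\textbf{Step 2: lower bound for \(\int_X\phi\,\omega^n\).} By the standard compactness of \(\{\psi\in\PSH(X,\omega):\sup_X\psi=0\}\) in \(L^1(\omega^n)\), there is a uniform constant \(A\) with \(\int_X(\phi-\sup_X\phi)\,\omega^n\ge -A\). Hence
\[
\int_X\phi\,\omega^n\ \ge\ V_\alpha\sup_X\phi-A.
\]
Also, \(E_\omega(\phi)=0\) together with \(E_\omega(\phi)\le V_\alpha\sup_X\phi\) (monotonicity of \(E_\omega\) plus \(E_\omega(c)=cV_\alpha\)) gives \(\sup_X\phi\ge0\).

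\textbf{Step 3: upper bound for \(d_1(0,\phi)\).} With the normalization \(\|\xi\|_{1,\phi}=V_\alpha^{-1}\int_X|\xi|\,\omega_\phi^n\), Darvas's formula gives \(d_1(u,v)=(E_\omega(u)-E_\omega(v))/V_\alpha\) whenever \(u\ge v\) \cite{Darvas2015}. Set \(s:=\sup_X\phi\ge0\). Then \(0\le s\) and \(\phi\le s\), so the triangle inequality yields
\[
d_1(0,\phi)\ \le\ d_1(0,s)+d_1(s,\phi)\ =\ s+\frac{sV_\alpha-E_\omega(\phi)}{V_\alpha}\ =\ 2s.
\]
Combining this with Step 2 gives \(\int_X\phi\,\omega^n\ge \tfrac{V_\alpha}{2}\,d_1(0,\phi)-A\), which is the required bound with \(c'=V_\alpha/2\) and \(C'=A\).

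The only delicate point is in Step 3: checking that the ordered-pair identity for \(d_1\) holds on all of \(\mathcal E^1\) with the paper's \(1/V_\alpha\) normalization, and that the extension of Proposition~\ref{prop:bounded-target-difference} to \(\mathcal E^1\) is available. Both are part of \cite{Darvas2015} and of the density argument already given in that proposition. Alternatively, one can invoke the equivalence of \(d_1(0,\cdot)\) with the Aubin functional on normalized potentials from \cite{DR2017}.
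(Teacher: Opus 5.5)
Your argument is correct, and it takes a different route from the paper.

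\textbf{The paper's route.} It changes the reference metric to \(\eta\), where \(\overline S_\alpha\eta^n=\Omega\) (Yau's theorem). This identifies \(\mathcal J_{\eta,\Omega}\) with \(\overline S_\alpha\) times Chen's top-degree \(J\)-functional. It then chains three external results: Chen's Proposition~6.1, the \(I\)--\(J\) comparison inequalities, and Darvas's estimate \(J_\eta\ge c_0\,d_{1,\eta}(0,\cdot)-C_0\). This is done on smooth potentials; the bound is transferred back to \(\omega\) by the triangle inequality and extended to \(\mathcal E^1_0(X,\omega)\) by density.

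\textbf{Your route.} You keep \(\omega\) fixed and swap the measure to \(\overline S_\alpha\omega^n\) at bounded cost via Proposition~\ref{prop:bounded-target-difference}. You then prove the needed comparison by hand. The \(L^1\)-compactness of sup-normalized \(\omega\)-psh functions gives \(\int_X\phi\,\omega^n\ge V_\alpha\sup_X\phi-A\). The ordered-pair case of Darvas's Pythagorean formula, applied through the constant \(\sup_X\phi\), gives \(d_1(0,\phi)\le 2\sup_X\phi\) on the slice.

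\textbf{Comparison.} Your argument is essentially a self-contained proof of the special case of Darvas's \(d_1\)--\(J\) comparison that the paper quotes. It works directly on \(\mathcal E^1_0(X,\omega)\) with no approximation step, and it gives the explicit constant \(c=S_\alpha/2\). The paper's route instead places the estimate inside Chen's twisted-functional framework, which it reuses in Theorem~\ref{thm:existence-implies-coercivity}, at the cost of relying on more outside results.

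\textbf{A further simplification.} You can drop Proposition~\ref{prop:bounded-target-difference}, and with it Yau's theorem. Since \(\phi-\sup_X\phi\le 0\) and \(\Omega\le C_\Omega\,\omega^n\), you get directly
\(\int_X\phi\,\Omega=S_\alpha\sup_X\phi+\int_X(\phi-\sup_X\phi)\,\Omega\ge S_\alpha\sup_X\phi-C_\Omega A\).
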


\begin{proof}
By Yau's theorem \cite{Yau1978}, choose \(\eta\in\alpha\) such that \(\Omega=\overline S_\alpha\eta^n\). Write \(\eta=\omega+\ddbar\psi\) and
\(\widetilde\phi=\phi-\psi\), where \(\psi\) is normalized by \(E_\omega(\psi)=0\). The cocycle identity gives
\[
E_\eta(\widetilde\phi)=E_\omega(\phi)-E_\omega(\psi)=0.
\]

We first relate the \(\mathcal J\)-functional to Chen's top-degree \(J\)-functional in \cite{Chen2018NewPerspective}. With reference metric \(\eta\), one has
\[
        \mathcal J_{\eta,\Omega}(v)
        =
        -\overline S_\alpha E_\eta(v)
        +
        \int_Xv\,\Omega.
\]
Since \(\Omega=\overline S_\alpha\eta^n\), and both
\(\mathcal J_{\eta,\Omega}\) and \(J_{\eta^n}\) are normalized to vanish at the origin, it follows that
\begin{equation*}
        \mathcal J_{\eta,\Omega}(v)
        =
        \overline S_\alpha J_{\eta^n}(v).
        \label{eq:top-J-identification}
\end{equation*}
The cocycle identity gives
\[
        \mathcal J_{\omega,\Omega}(\phi)
        -
        \mathcal J_{\omega,\Omega}(\psi)
        =
        \mathcal J_{\eta,\Omega}(\widetilde\phi).
\]
Consequently,
\begin{equation*}
        \mathcal J_{\omega,\Omega}(\phi)
        =
        \overline S_\alpha
        J_{\eta^n}(\widetilde\phi)
        +
        \mathcal J_{\omega,\Omega}(\psi),
        \label{eq:top-J-reference-change}
\end{equation*}
where the last term is independent of \(\phi\).

We now assume that \(\widetilde\phi\) is smooth. In the notation of Chen, the functional \(J(\phi)\) appearing at the right-hand end of \cite[Proposition~6.1]{Chen2018NewPerspective} is the Aubin \(I\)-functional
\[
        I_\eta(\phi)
        :=
        \int_X
        \phi\,
        \bigl(
        \eta^n-\eta_{\phi}^n
        \bigr).
\]
Hence \cite[Proposition~6.1]{Chen2018NewPerspective} gives
\[
        J_{\eta^n}(\widetilde\phi)
        \geq
        \frac{1}{n+1}
        I_\eta(\widetilde\phi).
\]
Recall that the Aubin--Yau \(J\)-functional is
\[
        J_\eta(\phi)
        :=
        \int_X
        \phi\,\eta^n
        -
        E_\eta(\phi),
\]
and that the standard comparison inequalities give
\[
        \frac{1}{n+1}
        I_\eta(\phi)
        \leq
        J_\eta(\phi)
        \leq
        \frac{n}{n+1}
        I_\eta(\phi).
\]
It follows that
\[
        J_{\eta^n}(\widetilde\phi)
        \geq
        \frac{1}{n}
        J_\eta(\widetilde\phi).
\]
By \cite[equation~(62)]{Darvas2015}, after a harmless volume normalization, there are constants \(c_0>0\) and \(C_0>0\) such that
\[
        J_\eta(\widetilde\phi)
        \geq
        c_0\,d_{1,\eta}(0,\widetilde\phi)-C_0.
\]
Consequently,
\[
        J_{\eta^n}(\widetilde\phi)
        \geq
        c_1\,d_{1,\eta}(0,\widetilde\phi)-C_1.
\]
Since
\[
        d_{1,\eta}(0,\widetilde\phi)
        =
        d_{1,\omega}(\psi,\phi)
        \geq
        d_{1,\omega}(0,\phi)-d_{1,\omega}(0,\psi),
\]
the desired inequality for smooth potentials follows. 

For a general \(\phi\in\mathcal E^1_0(X,\omega)\), take a sequence of smooth normalized potentials converging to \(\phi\) in \(d_1\). The inequality passes to the limit by the \(d_1\)-continuity of \(\mathcal J_{\omega,\Omega}\).
\end{proof}

\begin{lemma}
\label{lem:lower-order-energy-boundedness}
Let \(\theta\) be a smooth closed real \((1,1)\)-form, and let \(\nu\) be a smooth real top-degree form.  For every \(R>0\), there exists a constant \(C_R=C_R(X,\omega,\theta,\nu)>0\)
such that for every \(\phi\in\mathcal E^1_0(X,\omega)\) satisfying \(d_1(0,\phi)\leq R\), we have
\[
        \left|
        E_\omega^\theta(\phi)
        \right|
        +
        \left|
        \int_X\phi\,\nu
        \right|
        \leq
        C_R.
\]

\end{lemma}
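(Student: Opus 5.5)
The plan is to reduce both terms to a bound on \(\int_X|\phi|\,\omega^n\) and on \(\sup_X\phi\), which are controlled by \(d_1(0,\phi)\) on the normalized slice. I will first treat the linear term. Since \(\nu\) is smooth, \(|\nu|\le A\,\omega^n\), so \(|\int_X\phi\,\nu|\le A\int_X|\phi|\,\omega^n\). I will use Darvas's comparison \cite[Theorem~5.8]{Darvas2015} (as in Lemma~\ref{lem:smooth-linear-term-d1-continuity}, with \(\chi(l)=|l|\), \(v=0\)), which gives \(\int_X|\phi|\,\omega^n\le C\,d_1(0,\phi)\). This handles the \(\nu\)-term with \(C_R=AC R\).

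For the contracted energy, I will write \(\theta\le B\,\omega\) and \(\theta\ge -B\,\omega\) for some \(B>0\), and set \(\theta=(\theta+B\omega)-B\omega\). Here \(\theta+B\omega\ge0\) is closed, and \(E^{\omega}_\omega\) is expressible through \(E_\omega\) and \(\int\phi\,\omega^n\). It therefore suffices to bound \(\frac1n\sum_j\int_X\phi\,\tau\wedge\omega_\phi^j\wedge\omega^{n-1-j}\) for a smooth closed positive form \(\tau\le 2B\omega\).

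\emph{Upper bound.} \(\sup_X\phi\) is bounded on the slice. Indeed, \(\sup_X\phi\le \frac1{V_\alpha}\int_X\phi\,\omega^n+C_\omega\) by compactness of normalized \(\omega\)-psh functions, and the average is bounded by the \(L^1\) estimate. So \(\phi-\sup\phi\le0\), and each mixed integral of \(\phi\) is at most \((\sup\phi)\) times a cohomological constant, hence \(\le C_R\).

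\emph{Lower bound.} Write \(\psi=\phi-\sup_X\phi\le0\). Then \(\int_X\psi\,\tau\wedge\omega_\phi^j\wedge\omega^{n-1-j}\ge 2B\int_X\psi\,\omega\wedge\omega_\phi^j\wedge\omega^{n-1-j}\). The latter integrals are exactly the terms appearing in \(E_\omega\) via the standard monotone comparison \(\int\psi\,\omega_\psi^{j}\wedge\omega^{n-j}\) decreasing in \(j\). Each is bounded below by \((n+1)E_\omega(\psi)\), and \(E_\omega(\psi)=E_\omega(\phi)-V_\alpha\sup\phi=-V_\alpha\sup\phi\ge -C_R\). Constants pass through shifts since the mixed masses are cohomological.

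These estimates are first proved for smooth \(\phi\) and then extended to \(\mathcal E^1_0\) by the \(d_1\)-continuity of \(E^\theta_\omega\) \cite[Proposition~4.4]{BDL2017} and by density. The density step also needs approximants whose energy is nearly zero; I will renormalize them by constants, which changes \(d_1\) only slightly.

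The main obstacle is the lower bound: justifying the monotonicity \(\int\psi\,\omega_\psi^{j}\wedge\omega^{n-j}\) in \(j\) (integration by parts, \(\psi\le0\)) and controlling the mixed \(\theta\)-terms by positivity. An alternative route is to cite \cite[Proposition~4.4]{BDL2017}, which gives a Lipschitz-type estimate \(|E^\theta_\omega(u)-E^\theta_\omega(v)|\le C\,d_1(u,v)\) on \(d_1\)-bounded sets. Applied with \(v=0\), this would give the result directly.
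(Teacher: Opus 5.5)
Your proposal is correct. For the linear term it coincides with the paper's argument. For the record, the paper cites \cite[Theorem~5.5]{Darvas2015} for the quantitative bound \(\int_X|\phi|\,\omega^n\le C\,d_1(0,\phi)\); Theorem~5.8 is the convergence statement used in Lemma~\ref{lem:smooth-linear-term-d1-continuity}.

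For the contracted energy the routes differ. The paper simply invokes \cite[Proposition~4.4]{BDL2017} for boundedness of \(E^\theta_\omega\) on \(d_1\)-balls. You instead give a self-contained argument: you write \(\theta\) as a difference of closed semipositive forms dominated by \(2B\omega\), bound \(\sup_X\phi\) through the \(L^1\) estimate, and use \(E_\omega(\phi)=0\) to control the part below the supremum. You then need \cite{BDL2017} only for the \(d_1\)-continuous extension in the density step. Your route makes explicit that the bound reduces to Darvas's \(L^1\) comparison plus the normalization. The paper's route is a one-line citation and does not need the normalization.

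Three small corrections:
\begin{enumerate}
\item For \(n\ge2\), \(E^\omega_\omega\) is not a combination of \(E_\omega\) and \(\int_X\phi\,\omega^n\); in fact \(nE^\omega_\omega=(n+1)E_\omega-\int_X\phi\,\omega_\phi^n\). You do not need this claim: \(B\omega\) is itself closed, semipositive and \(\le 2B\omega\), so your positive-form argument already covers it.
\item The step you call the main obstacle needs no monotonicity in \(j\). Since \(\psi\le0\), each \(a_k:=\int_X\psi\,\omega_\psi^k\wedge\omega^{n-k}\) is nonpositive, so \(a_j\ge\sum_{k=0}^n a_k=(n+1)E_\omega(\psi)\) directly.
\item When undoing the shift you also use \(\sup_X\phi\ge -C\). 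This holds because \(V_\alpha\sup_X\phi\ge E_\omega(\phi)=0\).
\end{enumerate}
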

\begin{proof}
The boundedness of \(E_\omega^\theta\) on \(d_1\)-bounded subsets follows from Berman--Darvas--Lu \cite[Proposition~4.4]{BDL2017}. 

It remains to control the smooth linear term. Since \(X\) is compact and \(\nu\) is smooth, we may write
\(\nu=f\,\omega^n\) with \(f\in C^\infty(X,\mathbb R)\). Then
\[
        |\nu|\leq \|f\|_{C^0(X)}\,\omega^n .
\]
By \cite[Theorem~5.5]{Darvas2015}, applied with \(\chi(l)=|l|\), for every \(R>0\), there exists
\(C_{\omega,R}>0\) such that
\[
        \int_X|\phi|\,\omega^n
        \leq
        C_{\omega,R}
\]
for every \(\phi\in\mathcal E^1_0(X,\omega)\) with
\(d_1(0,\phi)\leq R\). Therefore, 
\[
        \left|
        \int_X\phi\,\nu
        \right|
        \leq
        \|f\|_{C^0(X)}\int_X|\phi|\,\omega^n
        \leq
       C.
\]
Combining the two estimates gives the desired result.
\end{proof}
\begin{proposition}
\label{prop:path-variational-bounds}
Assume that
\(\mathcal F_{\omega,\Omega}\) is \(d_1\)-coercive on
\(\mathcal E^1_0(X,\omega)\). Let \(I\subset[0,1]\), and suppose that, for every \(t\in I\), there exists a smooth solution \(\phi_t\in\Hpot_\omega\) of
\eqref{eq:modified-scalar-measure-path}, normalized by
\(E_\omega(\phi_t)
        =
        0\).
Then there exists a constant \(C>0\), independent of \(t\in I\), such that
\begin{equation*}
        d_1(0,\phi_t)\leq C, \qquad \text{and} \qquad
         \Ent_\omega(\phi_t)\leq C.
\end{equation*}
\end{proposition}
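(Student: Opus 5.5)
The approach is to show that each \(\phi_t\) minimizes \(\mathcal G_t\), compare \(\mathcal G_t\) with a convex combination of two coercive functionals to get a uniform \(d_1\)-bound, and then read off the entropy bound from \eqref{eq:path-entropy-normalized}.

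\emph{Step 1: each \(\phi_t\) minimizes \(\mathcal G_t\).} The argument of Lemma~\ref{lem:critical-point-minimizer} applies verbatim to \(\mathcal G_t\). The ingredients are:
\begin{itemize}
\item the endpoint slope inequality for \(a_t\mathcal M_\omega\), using \(a_t>0\);
\item the affinity of \(E_\omega\) along geodesics;
\item the secant inequality for \(L_{\omega,\Omega_t}\), using \(\Omega_t>0\).
\end{itemize}
By \eqref{eq:modified-path-differential} this gives \(\mathcal G_t(\phi_t)\le\mathcal G_t(\psi)\) for every \(\psi\in\Hpot_\omega\). By \(d_1\)-density of \(\Hpot_\omega\) and the normalization \(\mathcal G_t(0)=0\), we get \(\mathcal G_t(\phi_t)\le \mathcal G_t(0)=0\).

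\emph{Step 2: coercivity of \(\mathcal G_t\), uniform in \(t\).} On \(\mathcal E^1_0(X,\omega)\), write
\[
\mathcal G_t=a_t\bigl(\mathcal M_\omega-\overline S_\alpha E_\omega+L_{\omega,\Omega}\bigr)+(1-a_t)\bigl(-\overline S_\alpha E_\omega+L_{\omega,\Omega}\bigr)+L_{\omega,\Omega_t-\Omega}.
\]
Equivalently,
\[
\mathcal G_t=a_t\mathcal F_{\omega,\Omega}+(1-a_t)\mathcal J_{\omega,\Omega}+L_{\omega,\Omega_t-\Omega}.
\]
The first term is coercive by hypothesis, and the second by Lemma~\ref{lem:top-J-coercive}. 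For the last term, \(\Omega_t-\Omega=(1-t)(\Omega_0-\Omega)\), and Proposition~\ref{prop:bounded-target-difference} bounds \(\bigl|\int_X\phi\,(\Omega_0-\Omega)\bigr|\) uniformly on \(\mathcal E^1(X,\omega)\). Since \(a_t\in[\varepsilon_0,1]\), taking the minimum of the two coercivity constants gives
\[
\mathcal G_t(\phi)\ge \delta' d_1(0,\phi)-C'
\]
with \(\delta',C'\) independent of \(t\). Combined with Step 1, this yields \(d_1(0,\phi_t)\le C'/\delta'\).

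\emph{Step 3: the entropy bound.} By \eqref{eq:path-entropy-normalized},
\[
a_t\Ent_\omega(\phi_t)=\mathcal G_t(\phi_t)+na_tE_\omega^{\Ric(\omega)}(\phi_t)-L_{\omega,\Omega_t}(\phi_t).
\]
The first term on the right is at most \(0\). The other two are bounded by Lemma~\ref{lem:lower-order-energy-boundedness} with \(R=C'/\delta'\). For the linear term, we write \(L_{\omega,\Omega_t}=(1-t)L_{\omega,\Omega_0}+tL_{\omega,\Omega}\), so the constant is uniform in \(t\). Dividing by \(a_t\ge\varepsilon_0\) gives \(\Ent_\omega(\phi_t)\le C\).

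The main obstacle is Step 1. The minimization must hold for the path functional with coefficient \(a_t\neq1\), and the Berman--Berndtsson slope inequality must hold for \(a_t\mathcal M_\omega\) even though \(a_t\mathcal M_\omega-\overline S_\alpha E_\omega\) is no longer a multiple of a single twisted energy. This is fine because each piece is separately convex, or affine, with the right endpoint inequality. The rest is bookkeeping of uniform constants.
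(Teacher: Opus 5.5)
Your proposal is correct and follows the paper's proof essentially step for step. It uses the same decomposition \(\mathcal G_t=a_t\mathcal F_{\omega,\Omega}+(1-a_t)\mathcal J_{\omega,\Omega}+L_{\omega,\Omega_t-\Omega}\) with Lemma~\ref{lem:top-J-coercive} and Proposition~\ref{prop:bounded-target-difference}, gets \(\mathcal G_t(\phi_t)\le 0\) from the minimizing property via the argument of Lemma~\ref{lem:critical-point-minimizer}, and reads off the entropy bound from \eqref{eq:path-entropy-normalized} using \(a_t\ge\varepsilon_0\). (The appeal to \(d_1\)-density in Step 1 is unnecessary, since \(0\in\Hpot_\omega\) can be used directly as the competitor.)
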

\begin{proof}
Recall that, by the definition of \(\mathcal J_{\omega,\Omega}\), the modified path functional satisfies
\begin{equation}
        \mathcal G_t(\phi)
        =
        a_t\mathcal F_{\omega,\Omega}(\phi)
        +
        (1-a_t)\mathcal J_{\omega,\Omega}(\phi)
        +
        \int_X\phi\,(\Omega_t-\Omega).
        \label{eq:path-convex-combination}
\end{equation}

By assumption, there exist constants \(\delta_F>0\) and \(C_F>0\) such that
\[
        \mathcal F_{\omega,\Omega}(\phi)
        \geq
        \delta_Fd_1(0,\phi)-C_F
\]
for all \(\phi\in\mathcal E^1_0(X,\omega)\). By Lemma~\ref{lem:top-J-coercive}, there exist
\(\delta_J>0\) and \(C_J>0\) such that
\[
        \mathcal J_{\omega,\Omega}(\phi)
        \geq
        \delta_Jd_1(0,\phi)-C_J.
\]

Since \(\Omega_t-\Omega = (1-t)(\Omega_0-\Omega)\), Proposition~\ref{prop:bounded-target-difference} gives a constant \(B>0\), independent of \(t\in[0,1]\), such that
\[
        \left|
        \int_X\phi\,(\Omega_t-\Omega)
        \right|
        \leq
        B
\]
for every \(\phi\in\mathcal E^1_0(X,\omega)\).

Because \(0<a_t\leq1\), equation \eqref{eq:path-convex-combination} gives
\begin{equation}
        \mathcal G_t(\phi)
        \geq
        \delta\,d_1(0,\phi)-C_0
        \label{eq:uniform-path-coercivity}
\end{equation}
for all \(t\in[0,1]\) and \(\phi\in\mathcal E^1_0(X,\omega)\), where one may take \(\delta =\min \left\{\delta_F,\delta_J \right\}>0\) and \(C_0=\max\left\{C_F,C_J \right\}+B\). For each \(t\in I\), the functional \(\mathcal G_t\) is convex along weak geodesics. Since \(\phi_t\) is a smooth critical point, the same argument as in Lemma~\ref{lem:critical-point-minimizer} shows that \(\phi_t\) is a global minimizer of \(\mathcal G_t\) among smooth potentials. Consequently,
\[
        \mathcal G_t(\phi_t)
        \leq
        \mathcal G_t(0)
        =
        0.
\]
Combining this with \eqref{eq:uniform-path-coercivity} gives
\[
        d_1(0,\phi_t)
        \leq
        \frac{C_0}{\delta},
\]
which gives the \(d_1\)-bound.

It remains to control the entropy. Since \(E_\omega(\phi_t)= 0\), equation \eqref{eq:path-entropy-normalized} gives
\[
        a_t\Ent_\omega(\phi_t)
        =
        \mathcal G_t(\phi_t)
        +
        na_tE_\omega^{\Ric(\omega)}(\phi_t)
        -
         L_{\omega,\Omega_t}(\phi_t).
\]
Using \(\mathcal G_t(\phi_t)\leq 0\), we obtain
\[
        a_t\Ent_\omega(\phi_t)
        \leq
        na_t
        \left|
        E_\omega^{\Ric(\omega)}(\phi_t)
        \right|
        +
        \left|
         L_{\omega,\Omega_t}(\phi_t)
        \right|.
\]

The \(d_1\)-bound and Lemma~\ref{lem:lower-order-energy-boundedness} give a uniform bound for \(E_\omega^{\Ric(\omega)}(\phi_t)\). Since
\[
        L_{\omega,\Omega_t}(\phi_t)
        =
        (1-t)L_{\omega,\Omega_0}(\phi_t)
        +
        tL_{\omega,\Omega}(\phi_t),
\]
Lemma \ref{lem:lower-order-energy-boundedness} again gives a uniform bound for \(L_{\omega,\Omega_t}(\phi_t)\). It follows from \(a_t\geq\varepsilon_0>0\) that
\[
        \Ent_\omega(\phi_t)
        \leq
        C
\]
uniformly for \(t\in I\), completing the proof.
\end{proof}


\subsection{\(d_1\)-coercivity from smooth solvability}
\label{subsec:coercivity-from-solvability}
We now prove that the smooth solvability of the prescribed scalar curvature measure equation \eqref{eq:scalar-measure} forces \(d_1\)-coercivity of \(\mathcal F_{\omega,\Omega}\). The coercivity on \(\Hpot_\omega\) is an application of Chen's result for the higher-degree twisted \(K\)-energy \cite[Section~6]{Chen2018NewPerspective}. The extension from \(\Hpot_\omega\) to \(\mathcal E^1_0(X,\omega)\) uses some approximation results of Berman--Darvas--Lu together with the entropy decomposition recalled in the preceding subsection.

\begin{theorem}\label{thm:existence-implies-coercivity}
Let \(\Omega\in\mathcal V_\alpha^+\). Suppose that \eqref{eq:scalar-measure} admits a smooth solution in
\(\Hpot_\omega\). Then the functional \(\mathcal F_{\omega,\Omega}\) is \(d_1\)-coercive on \(\mathcal E^1_0(X,\omega)\).
\end{theorem}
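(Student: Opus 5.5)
The plan is to establish coercivity first on smooth potentials, using Chen's theory for the higher-degree twisted \(K\)-energy, and then extend it to all of \(\mathcal E^1_0(X,\omega)\) by entropy-preserving approximation.

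\emph{Step 1: coercivity on \(\Hpot_\omega\).} By Section~\ref{subsec:higher-degree-twisted-K-energy}, \(\mathcal F_{\omega,\Omega}=2\bigl(\tfrac12\mathcal M_\omega+\tfrac12 J_{\mu_n}\bigr)\) with \(\mu_n=\Omega=\overline S_\alpha\eta^n>0\), where \(\eta\in\alpha\) is the Yau metric. Let \(\phi_*\) be the given smooth solution. By Proposition~\ref{prop:EL}, \(\phi_*\) is a critical point of this twisted \(K\)-energy, and by Lemma~\ref{lem:critical-point-minimizer} it is a global minimizer. Chen's result in \cite[Section~6]{Chen2018NewPerspective} states that the existence of a smooth critical point of the twisted \(K\)-energy with a strictly positive twisting form implies properness. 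Concretely, it gives constants \(\delta,C>0\) with \(\mathcal F_{\omega,\Omega}(\phi)\ge \delta J_\omega(\phi)-C\) on normalized smooth potentials.

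We then convert the \(J\)-properness into \(d_1\)-growth. By Darvas' comparison \cite{Darvas2015}, \(J_\omega\) and \(d_1(0,\cdot)\) are equivalent up to affine constants on \(\{E_\omega=0\}\); this is the same estimate used in Lemma~\ref{lem:top-J-coercive}. This yields \eqref{eq:d1-coercivity} for \(\phi\in\Hpot_\omega\cap\mathcal E^1_0(X,\omega)\).

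If one prefers to avoid quoting properness as a black box, there is a self-contained route using strict convexity at the minimizer. By Proposition~\ref{prop:automatic-nondegeneracy}, the Hessian \eqref{eq:P-positive-quadratic-form} dominates \(\int|\partial u|^2\Omega\). Then one argues by contradiction: suppose coercivity fails. Geodesic convexity and the Darvas--Rubinstein properness principle \cite{DR2017} would produce a finite-energy geodesic ray from \(\phi_*\) along which \(\mathcal F_{\omega,\Omega}\) is nonincreasing. However, \(L_{\omega,\Omega}-\overline S_\alpha E_\omega=\mathcal J_{\omega,\Omega}\) has slope at least \(c\,v_1(\ell)>0\) by Lemma~\ref{lem:top-J-coercive}, while \(\mathcal M_\omega\) has slope bounded below by \(\mathcal M_\omega\)'s endpoint derivative at \(\phi_*\). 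Balancing these with the \(t=\tfrac12\) weights, as in Lemma~\ref{lem:critical-point-minimizer}, gives a contradiction. I would present the Chen citation as the primary argument and this as a backup.

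\emph{Step 2: extension to \(\mathcal E^1_0\).} Let \(\phi\in\mathcal E^1_0(X,\omega)\). If \(\Ent_\omega(\phi)=+\infty\), then \(\mathcal F_{\omega,\Omega}(\phi)=+\infty\) by \eqref{eq:F-entropy-decomposition}, and there is nothing to prove. Otherwise, I apply the approximation lemma of Berman--Darvas--Lu \cite[Lemma~3.1, Theorem~4.7]{BDL2017}. It produces smooth \(\phi_j\in\Hpot_\omega\) with \(d_1(\phi_j,\phi)\to0\) and \(\Ent_\omega(\phi_j)\to\Ent_\omega(\phi)\); after subtracting \(E_\omega(\phi_j)/V_\alpha\), we may take \(\phi_j\in\mathcal E^1_0\).

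Since \(E^{\Ric(\omega)}_\omega\) and \(L_{\omega,\Omega}\) are \(d_1\)-continuous (\cite[Proposition~4.4]{BDL2017} and Lemma~\ref{lem:smooth-linear-term-d1-continuity}), formula \eqref{eq:F-entropy-decomposition} gives \(\mathcal F_{\omega,\Omega}(\phi_j)\to\mathcal F_{\omega,\Omega}(\phi)\). Moreover, \(d_1(0,\phi_j)\to d_1(0,\phi)\). Passing to the limit in the smooth inequality therefore finishes the proof.

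The main obstacle is Step 1: making sure Chen's properness theorem applies verbatim to the top-degree case \(k=n\) with coefficient \(\tfrac12\), and identifying his notion of properness with \(d_1\)-growth. The normalization bookkeeping (cocycle shifts between \(\omega\) and \(\eta\)) is handled exactly as in Lemma~\ref{lem:top-J-coercive}.
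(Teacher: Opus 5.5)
Your primary argument is the paper's proof: coercivity on smooth normalized potentials from Chen's result for the top-degree twisted \(K\)-energy at the midpoint (your \(J_\omega\to d_1\) conversion via Darvas is the standard bridge that the paper leaves implicit), followed by extension to \(\mathcal E^1_0(X,\omega)\) via the Berman--Darvas--Lu entropy-convergent smooth approximation (the paper cites \cite[Theorem~3.2]{BDL2017}) and the entropy decomposition of \(\mathcal F_{\omega,\Omega}\). Your backup route does not close as written: an asymptotic slope of \(\mathcal J_{\omega,\Omega}\) of at least \(c\,v_1(\ell)\), together with the initial-derivative bound for \(\mathcal M_\omega\), only gives \(\mathcal F_{\omega,\Omega}\{\ell\}\geq c\,v_1(\ell)-j_0\), where \(j_0\) is the initial right derivative of \(\mathcal J_{\omega,\Omega}\) along \(\ell\) and may exceed \(c\,v_1(\ell)\); the contradiction must instead come from strict convexity, namely that \(\mathcal F_{\omega,\Omega}\) being constant along a ray from the minimizer forces \(L_{\omega,\Omega}\) to be affine and hence, since \(\Omega>0\), the ray to be constant (uniqueness of minimizers in \(\mathcal E^1\)), which is what the Darvas--Rubinstein principle actually uses.
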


\begin{proof}
The coercivity of \(\mathcal{F}_{\omega,\Omega}\) on the smooth potential space \(\Hpot_\omega\) follows from \cite[Section~6, proof of the necessary part of Conjecture~6.5]{Chen2018NewPerspective}. In Chen's notation, \(\mathcal F_{\omega,\Omega}\) is the top-degree case \(k=n\) at the midpoint of the twisted path:
\[
        \mathcal F_{\omega,\Omega}
        =
        2\left(
        \frac12\,\mathcal M_\omega
        +
        \frac12\,\mathcal J_{\omega,\Omega}
        \right).
\]
Thus the existence of a smooth solution of \eqref{eq:scalar-measure} gives constants \(\delta>0\) and \(C>0\)
such that for every smooth potential \(\phi\in\Hpot_\omega\) with \(E_\omega(\phi)=0\),
\[
        \mathcal F_{\omega,\Omega}(\phi)
        \geq
        \delta\,d_1(0,\phi)-C.
\]

It remains to extend the coercivity to \(\mathcal E^1_0(X,\omega)\). Let \(\phi\in\mathcal E^1_0(X,\omega)\). If \(\Ent_\omega(\phi)=+\infty\), then the decomposition \eqref{eq:F-entropy-decomposition} together with Lemma \ref{lem:lower-order-energy-boundedness} gives \(\mathcal F_{\omega,\Omega}(\phi)=+\infty\), and the desired result is immediate.

Assume therefore that \(\Ent_\omega(\phi)<+\infty\). Applying \cite[Theorem~3.2]{BDL2017} with \(p=1\) and \(f=0\), there exist smooth potentials \(\phi_j\in\Hpot_\omega\) such that \(\phi_j\rightarrow\phi\) in \(d_1\), and
\[
        \Ent_\omega(\phi_j)
        \longrightarrow
        \Ent_\omega(\phi).
\]
Moreover, by \cite[Proposition~4.4]{BDL2017} and Lemma \ref{lem:smooth-linear-term-d1-continuity}, we have
\[
        E_\omega^{\Ric(\omega)}(\phi_j)
        \longrightarrow
        E_\omega^{\Ric(\omega)}(\phi),
        \qquad
        L_{\omega,\Omega}(\phi_j)
        \longrightarrow
        L_{\omega,\Omega}(\phi).
\]
Hence
\[
        \mathcal F_{\omega,\Omega}(\phi_j)
        \longrightarrow
        \mathcal F_{\omega,\Omega}(\phi).
\]

Since \(E_\omega(\phi_j)\to E_\omega(\phi)=0\), set
\[
        \widehat\phi_j
        :=
        \phi_j-\frac{E_\omega(\phi_j)}{V_\alpha}.
\]
Then
\(E_\omega(\widehat\phi_j)=0\) and \(\widehat\phi_j\to\phi\) in \(d_1\). By Lemma~\ref{lem:constant-invariance},
\[
        \mathcal F_{\omega,\Omega}(\widehat\phi_j)
        =
        \mathcal F_{\omega,\Omega}(\phi_j).
\]
Passing to the limit gives
\[
        \mathcal F_{\omega,\Omega}(\phi)
        \geq
        \delta\,d_1(0,\phi)-C
\]
for every \(\phi\in\mathcal E^1_0(X,\omega)\). Thus \(\mathcal F_{\omega,\Omega}\) is \(d_1\)-coercive on
\(\mathcal E^1_0(X,\omega)\).
\end{proof}

As an immediate consequence, the existence of one positive scalar curvature metric in the class implies \(d_1\)-coercivity for every admissible target, as well as uniform geodesic stability.

\begin{corollary}
\label{cor:psc-implies-stability}
If the K\"ahler class \(\alpha\) contains a PSC K\"ahler metric, then, for every \(\Omega\in\mathcal V_\alpha^+\), the functional \(\mathcal F_{\omega,\Omega}\) is \(d_1\)-coercive on \(\mathcal E^1_0(X,\omega)\). Consequently, the \(\mathcal F\)-functional is uniformly geodesically stable.
\end{corollary}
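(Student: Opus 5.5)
The plan is to reduce the corollary to Theorem~\ref{thm:existence-implies-coercivity} by producing, from the given PSC metric, one admissible measure for which \eqref{eq:scalar-measure} is solvable. Then coercivity transfers to every admissible measure by Corollary~\ref{cor:target-independent-coercivity}, and stability follows from Corollary~\ref{cor:analytic-PSC-stability-implies-radial-stability}.

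Let \(\omega'\in\alpha\) be a K\"ahler metric with \(\Scal(\omega')>0\). By the \(\partial\bar\partial\)-lemma, write \(\omega'=\omega+\ddbar\phi\) with \(\phi\in\Hpot_\omega\). Set \(\Omega':=\Scal(\omega_\phi)\,\omega_\phi^n\). This is a smooth, strictly positive top-degree form, and its total mass is \(\int_X\Scal(\omega_\phi)\omega_\phi^n=S_\alpha\), so \(\Omega'\in\vap\). By construction \(\phi\) is a smooth solution of \eqref{eq:scalar-measure} with measure \(\Omega'\). Theorem~\ref{thm:existence-implies-coercivity} therefore shows that \(\mathcal F_{\omega,\Omega'}\) is \(d_1\)-coercive on \(\mathcal E^1_0(X,\omega)\).

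Now let \(\Omega\in\vap\) be arbitrary. By part (1) of Corollary~\ref{cor:target-independent-coercivity}, whose input is the uniform bound of Proposition~\ref{prop:bounded-target-difference} on \(\int_X\phi\,(\Omega-\Omega')\), the functional \(\mathcal F_{\omega,\Omega}\) is also \(d_1\)-coercive, possibly with a larger constant \(C\). The reference metric \(\omega\) was arbitrary from the start. In any case, Corollary~\ref{cor:reference-independent} makes the conclusion independent of \(\omega\), so the \(\mathcal F\)-functional associated with \(\alpha\) is \(d_1\)-coercive in the sense fixed after Corollary~\ref{cor:target-independent-coercivity}.

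Finally, Corollary~\ref{cor:analytic-PSC-stability-implies-radial-stability} upgrades \(d_1\)-coercivity to uniform geodesic stability. One point in Definition~\ref{def:uniform-geodesic-stability} needs checking: a single \(\delta\) must serve for every reference metric. The radial slope \(\mathcal F_\alpha\{\ell\}\) and the speed \(v_1\) are invariant under change of reference metric and parallel transport, by the discussion leading to \eqref{eq:target-independent-radial-functional}. Hence the \(\delta\) obtained for one fixed \(\omega\) works for all of them.

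There is no real obstacle here. All the analytic content, namely Chen's coercivity on \(\Hpot_\omega\) and its extension to \(\mathcal E^1_0\) through the entropy approximation of Berman--Darvas--Lu, is already contained in Theorem~\ref{thm:existence-implies-coercivity}. The only care needed is to check that \(\Omega'\) is admissible, that is, positive and of the correct mass, and that the uniformity of \(\delta\) across reference metrics is justified as above.
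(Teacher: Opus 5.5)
Your proposal is correct and is essentially the paper's proof. You take $\Omega'=\Scal(\omega_\phi)\,\omega_\phi^n\in\vap$ for the PSC potential $\phi$, apply Theorem~\ref{thm:existence-implies-coercivity}, transfer coercivity to every $\Omega$ via Corollary~\ref{cor:target-independent-coercivity}, and conclude with Corollary~\ref{cor:analytic-PSC-stability-implies-radial-stability}; your extra remark that one $\delta$ serves every reference metric, by invariance of radial slopes and speeds, is a correct point the paper leaves implicit.
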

\begin{proof}
Let \(\omega_0\in\alpha\) with positive scalar curvature. Write \(\omega_0=\omega+\ddbar\phi_0\) for some \(\phi_0\in\Hpot_\omega\), and define \(\Omega_0:=\Scal(\omega_0)\,\omega_0^n\). Then \(\Omega_0\in\vap\), and \(\phi_0\) solves \eqref{eq:scalar-measure} with prescribed measure \(\Omega_0\). Theorem~\ref{thm:existence-implies-coercivity} shows that \(\mathcal F_{\omega,\Omega_0}\) is \(d_1\)-coercive. Corollary~\ref{cor:target-independent-coercivity} gives the same conclusion for every admissible measure, and Corollary~\ref{cor:analytic-PSC-stability-implies-radial-stability} gives uniform geodesic stability.
\end{proof}


\section{A priori estimates and closedness}
\label{sec:closedness}

In this section, we establish uniform estimates for smooth solutions of the modified continuity path \eqref{eq:modified-scalar-measure-path}. The main analytic step is a uniform \(C^0\)-estimate for both the K\"ahler potential and the logarithmic volume ratio. We adapt the method from \cite{DeruelleDiNezza2022} to the specific exponential structure of the coupled system arising from our path. Once this zeroth-order control is established, Chen--Cheng's \cite{ChenCheng2021I} estimates apply to give uniform metric equivalence and higher-order estimates. Together with the uniform entropy bound obtained in Proposition~\ref{prop:path-variational-bounds}, these estimates imply the closedness of the parameter set \(\mathcal T\).


\subsection{The coupled system}
\label{subsec:coupled-system}

Let \(t\in[0,1]\), and suppose that
\(\phi_t\in\Hpot_\omega\) is a smooth solution of
\eqref{eq:modified-scalar-measure-path}. Write
\(\omega_t
        :=
        \omega_{\phi_t}\)
and define the logarithmic volume ratio
\[
        F_t
        :=
        \log
        \frac{\omega_t^n}{\omega^n}.
\]
Since
\(\Omega_t
        =
        (1-t)\Omega_0+t\Omega\)
is a smooth positive volume form satisfying
\[
        \int_X\Omega_t
        =
        S_\alpha
        =
        \overline S_\alpha V_\alpha,
\]
there is a smooth function \(h_t\) such that
\[
        \Omega_t
        =
        \overline S_\alpha e^{h_t}\omega^n.
\]
Because \(\Omega_0\) and \(\Omega\) are fixed smooth positive volume forms, for every integer \(k\geq0\) there exists
\(C_k=C_k(X,\omega,\Omega_0,\Omega,k)>0\) such that
\[
        \sup_{t\in[0,1]}
        \lVert h_t\rVert_{C^k(X,\omega)}
        \leq
        C_k.
\]
Define
\[
        A_t
        :=
        \frac{\overline S_\alpha}{a_t},
        \qquad
        B_t
        :=
        \frac{1-a_t}{a_t}\,
        \overline S_\alpha.
\]
Since \(\varepsilon_0\leq a_t\leq1\), we have the uniform bounds
\[
        \overline S_\alpha
        \leq
        A_t
        \leq
        \frac{\overline S_\alpha}{\varepsilon_0},
        \qquad
        0
        \leq
        B_t
        \leq
        \frac{1-\varepsilon_0}{\varepsilon_0}
        \overline S_\alpha.
\]
Dividing \eqref{eq:modified-scalar-measure-path} by \(\omega_t^n\), we obtain
\[
        \Scal(\omega_t)
        =
        A_t e^{h_t-F_t}
        -
        B_t.
\]
On the other hand,
\[
        \Ric(\omega_t)
        =
        \Ric(\omega)
        -
        \ddbar F_t,
\]
and hence
\[
        \Scal(\omega_t)
        =
        \operatorname{tr}_{\omega_t}\Ric(\omega)
        -
        \Delta_{\omega_t}F_t.
\]
Consequently,
\eqref{eq:modified-scalar-measure-path}
is equivalent to the coupled system
\begin{equation}
\begin{cases} 
\displaystyle \omega_{\phi_t}^n = e^{F_t}\omega^n, \\[6pt] \displaystyle \Delta_{\omega_{\phi_t}}F_t = \operatorname{tr}_{\omega_{\phi_t}}\Ric(\omega) + B_t - A_t e^{h_t-F_t}. 
\end{cases}
\label{eq:modified-coupled-system}
\end{equation}
In particular, the coefficient \(A_t\) of the exponential term has a uniform positive lower bound, and \(A_t\), \(B_t\), and \(h_t\) are uniformly controlled in \(t\).


\subsection{The uniform \(C^0\)-estimate}
\label{subsec:uniform-C0}
In this subsection, we prove the uniform \(C^0\)-estimate for smooth solutions of the system \eqref{eq:modified-coupled-system}.
\begin{proposition}
\label{prop:uniform-C0-estimate}
Let \(t\in[0,1]\), and let \((\phi_t,F_t)\) be a smooth solution of \eqref{eq:modified-coupled-system} normalized by \(\sup_X\phi_t=0\). Assume that
\[
        \Ent_\omega(\phi_t)
        =
        \int_X
        F_t e^{F_t}\omega^n
        \leq
        C_E.
\]
Then there exists a constant \(C=C(X,\omega,\varepsilon_0,\Omega_0,\Omega,C_E)>0\), such that
\[
        \lVert\phi_t\rVert_{C^0(X)}
        +
        \lVert F_t\rVert_{C^0(X)}
        \leq
        C.
\]
\end{proposition}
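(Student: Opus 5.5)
Since $\Ent_\omega(\phi_t)\le C_E$ is uniform, I would follow the Chen--Cheng $C^0$ scheme \cite{ChenCheng2021I}, in the form used by Deruelle--Di Nezza \cite{DeruelleDiNezza2022}. The point is to use the sign of the exponential term $-A_t e^{h_t-F_t}$ in the second equation of \eqref{eq:modified-coupled-system}. That term is $\le 0$, so it only helps when bounding $F_t$ from above. When bounding $F_t$ from below it blows up, and this is where the positivity $A_t\ge\overline S_\alpha>0$ enters.

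\emph{Upper bound for $F_t$ and lower bound for $\phi_t$.} From the second equation,
\[
\Delta_{\omega_t}F_t\le \operatorname{tr}_{\omega_t}\Ric(\omega)+B_t\le C_1\operatorname{tr}_{\omega_t}\omega+C_2 .
\]
This is exactly the structure handled by Chen--Cheng with bounded right-hand side replaced by $B_t\le C(\varepsilon_0)$. So I would run their argument. Let $\psi_t$ solve the auxiliary Monge--Amp\`ere equation
\[
\omega_{\psi_t}^n=\frac{e^{F_t}\sqrt{F_t^2+1}}{\int_X e^{F_t}\sqrt{F_t^2+1}\,\omega^n}\,V_\alpha\,\omega^n .
\]
Its right-hand side lies in $L\log L$-type Orlicz classes controlled by $C_E$ (Ko\l odziej / Guo--Phong--Tong), which gives a uniform bound on $\psi_t$. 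Then apply the maximum principle to
\[
F_t+\epsilon\psi_t-\lambda\phi_t
\]
to obtain $F_t\le C-\lambda\phi_t$, with $e^{F_t}$ in a high $L^p$. Kołodziej's estimate then bounds $\|\phi_t\|_{C^0}$ using $\sup\phi_t=0$, and hence $F_t\le C$. The constants depend only on $\sup_t\|h_t\|_{C^0}$, $\varepsilon_0$, $\Omega_0$, $\Omega$ and $C_E$.

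\emph{Lower bound for $F_t$.} This is the step I expect to be the main obstacle. The term $-A_te^{h_t-F_t}$ is large and negative exactly where $F_t$ is very negative, so $\Delta_{\omega_t}F_t$ has the wrong sign there. I would consider $G=F_t+\lambda\phi_t$ at a minimum point $p$. There
\[
0\le\Delta_{\omega_t}G=\operatorname{tr}_{\omega_t}\Ric(\omega)+B_t-A_te^{h_t-F_t}+\lambda n-\lambda\operatorname{tr}_{\omega_t}\omega .
\]
Choose $\lambda\ge C_1+1$ so that $\operatorname{tr}_{\omega_t}\Ric(\omega)-\lambda\operatorname{tr}_{\omega_t}\omega\le 0$. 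Then at $p$,
\[
A_te^{h_t-F_t}\le B_t+\lambda n ,
\]
and since $A_t\ge\overline S_\alpha$ and $|h_t|\le C_0$, this gives $F_t(p)\ge -C$. Using the $C^0$ bound on $\phi_t$ already obtained, $\inf_X G\ge -C$ gives $F_t\ge -C$ everywhere. So this apparent obstacle is in fact resolved by the exponential structure together with the uniform positivity of $A_t$.

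Finally, I will check that the Deruelle--Di Nezza adaptation of the first step needs no derivative information on $h_t$ beyond $C^0$. If it did, the uniform $C^k$ bounds on $h_t$ recorded above would supply it.
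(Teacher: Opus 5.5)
Your lower bound for $F_t$ (minimum principle for $F_t+\lambda\phi_t$ with $\lambda\ge C_1+1$, then $A_t\ge\overline S_\alpha$) is correct and is exactly the paper's argument. The gap is in the first step, which carries the real content of the proposition.

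\textbf{The a priori bound on $\psi_t$ is not available.} $C_E$ controls only $b=\frac{1}{V_\alpha}\int_X e^{F_t}\sqrt{1+F_t^2}\,\omega^n$, i.e.\ an $L^1$ bound on the auxiliary density. Ko{\l}odziej or Guo--Phong--Tong would need a higher moment, roughly $\int_X e^{F_t}|F_t|^{q}\,\omega^n$ with $q>n+1$, and the entropy bound does not give this. In the paper, $\lVert\psi\rVert_{C^0}$ is obtained only at the very end.

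\textbf{The differential inequality has the wrong direction.} $\Delta_{\omega_t}F_t\le C_1\operatorname{tr}_{\omega_t}\omega+C_2$ cannot drive a maximum principle at a maximum of $H=F_t+\epsilon\psi_t-\lambda\phi_t$. There you need the lower bound $\Delta_{\omega_t}F_t\ge-\rho_0\operatorname{tr}_{\omega_t}\omega-A_te^{h_t-F_t}$, where $\Ric(\omega)\ge-\rho_0\omega$. So the exponential term works \emph{against} you in this step, and your heuristic that it ``only helps'' from above is reversed. The paper handles it pointwise: at the maximum point $x_0$, either $F_t(x_0)\le0$, or $A_te^{h_t-F_t(x_0)}\le C$. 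In the second case, AM--GM gives $\epsilon\operatorname{tr}_{\omega_t}\omega_{\psi_t}\ge n\epsilon\, b^{-1/n}(1+F_t^2)^{1/(2n)}$, which yields $F_t(x_0)\le C$.

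\textbf{Most importantly, bounding $F_t(x_0)$ does not bound $H(x_0)$.} We have $H(x_0)=F_t(x_0)+\epsilon\psi_t(x_0)-\lambda\phi_t(x_0)$, and the term $-\lambda\phi_t(x_0)\ge0$ is uncontrolled, since a bound on $\phi_t$ is what you are trying to prove. Hence $F_t\le C-\lambda\phi_t$ does not follow.

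The missing idea is a comparison $\phi_t\ge\delta\psi_t-C$ for a fixed small $\delta$. With $\lambda\delta<\epsilon$ and $\psi_t\le0$, this gives $\epsilon\psi_t-\lambda\phi_t\le C$. The paper, following Deruelle--Di Nezza, obtains it in two steps. First, the pointwise splitting according to the size of $\sqrt{1+F^2}$, together with $b\le C(C_E)$, gives the domination $\omega_{\phi_t}^n\le\frac12\,\omega_{\delta\psi_t}^n+e^{2b/\delta^n}\omega^n$. Second, this is fed into the relative Ko{\l}odziej estimate of Darvas--Di Nezza--Lu. Chen--Cheng close the same step differently, via an Alexandrov-type argument.

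With the comparison in hand, the chain closes:
\begin{enumerate}[label=\textup{(\arabic*)}]
\item the maximum principle gives $F_t\le C-\epsilon\psi_t$;
\item Skoda gives $e^{F_t}\in L^2$;
\item Ko{\l}odziej bounds $\phi_t$;
\item the auxiliary density is then bounded in $L^2$, so $\psi_t$ is bounded;
\item finally $\sup_XF_t\le C$.
\end{enumerate}
Deferring this to ``run their argument'' does not suffice, because the step you would be importing is exactly the one your sketch misstates.
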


For the remainder of this subsection, fix \(t\in[0,1]\), suppress the subscript \(t\), and write \(\phi=\phi_t\), \(F=F_t\), \(h=h_t\). We keep the notation \(A_t\) and \(B_t\) for the coefficients in \eqref{eq:modified-coupled-system}. All bounds below depend only on \( X,\omega,\varepsilon_0,\Omega_0,\Omega,C_E\) and are independent of \(t\). Define
\[
        b
        :=
        \frac{1}{V_\alpha}
        \int_X
        e^F\sqrt{1+F^2}\,\omega^n.
\]
Since \(\int_X e^F\omega^n=V_\alpha\), we have \(b\geq1\). We next derive a uniform upper bound for \(b\). Note that \(x\longmapsto e^x\sqrt{1+x^2}\) is increasing and bounded on \((-\infty,1]\), hence
\[
\int_{\{F\le 1 \}} e^F\sqrt{1+F^2}\,\omega^n \le \sqrt{2}e V_\alpha.
\]
Since \((-x)e^x\leq e^{-1}\) for \(x<0\), the entropy bound gives
\begin{align*}
\int_{\{F\geq1\}}Fe^F\omega^n \le &\int_{\{F\geq0\}}Fe^F\omega^n\\
=&  \int_XFe^F\omega^n
        +
        \int_{\{F<0\}}
        (-F)e^F\omega^n  \\
        \le& C_E + \frac{V_\alpha}{e}.
\end{align*}
Consequently,
\begin{align*}
        b
        =
        \frac{1}{V_\alpha}
        \int_X
        e^F\sqrt{1+F^2}\,\omega^n &\le \frac{1}{V_\alpha} \left(\int_{\{F\le 1 \}} e^F\sqrt{1+F^2}\,\omega^n +\int_{\{F\ge 1 \}} e^F\sqrt{1+F^2}\,\omega^n\right) \\
        &\le \sqrt{2}e+\frac{1}{V_\alpha}\int_{\{F\geq1\}}2Fe^F\omega^n\\
        &\le C.
\end{align*}
By Yau's theorem \cite{Yau1978}, there exists a unique smooth potential \(\psi\in\Hpot_\omega\), normalized by \(\sup_X\psi=0\), such that
\begin{equation}
        \omega_\psi^n
        =
        b^{-1}
        e^F\sqrt{1+F^2}\,\omega^n
        \label{eq:auxiliary-MA-equation}
\end{equation}
Indeed, the right-hand side is a smooth positive volume form with total
mass
\[
        b^{-1}
        \int_X
        e^F\sqrt{1+F^2}\,\omega^n
        =
        V_\alpha.
\]

\begin{lemma}
\label{lem:phi-psi-comparison}
Let \(\Lambda>0\) and \(\lambda>0\) be fixed. Then there exists
\( C=C(X,\omega,\varepsilon_0,\Omega_0,\Omega,C_E,\Lambda,\lambda)>0\), such that
\[
        \lambda\psi-\Lambda\phi
        \leq
        C.
\]
\end{lemma}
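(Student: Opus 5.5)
The plan is to follow the Chen--Cheng strategy: apply the maximum principle together with an Alexandrov--Bakelman--Pucci (ABP) estimate to an auxiliary function built from \(\psi\) and \(\phi\). The only inputs are the auxiliary equation \eqref{eq:auxiliary-MA-equation}, the first line of the coupled system \eqref{eq:modified-coupled-system}, and the uniform entropy bound, which gives \(b\le C\). I will not use the fourth-order equation (the second line of \eqref{eq:modified-coupled-system}) here: it carries the exponential term \(-A_te^{h-F}\), which has the wrong sign for an upper bound when \(F\ll0\). The argument will be set up so that no Laplacian of \(F\) is needed.

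\textbf{Step 1: choice of function and pointwise inequality.} Take \(Q:=\lambda\psi-\Lambda\phi\), with \(\Lambda\) at least as large as \(\lambda\) plus a fixed constant; this is harmless, since enlarging \(\Lambda\) only strengthens the conclusion because \(\phi\le0\). By the arithmetic--geometric mean inequality applied to \(\omega_\psi\) relative to \(\omega_\phi\),
\[
\Delta_{\omega_\phi}\psi\ \ge\ n\Bigl(\frac{\omega_\psi^n}{\omega_\phi^n}\Bigr)^{1/n}-\operatorname{tr}_{\omega_\phi}\omega
\ =\ n\,b^{-1/n}(1+F^2)^{1/(2n)}-\operatorname{tr}_{\omega_\phi}\omega ,
\]
and \(\Delta_{\omega_\phi}\phi=n-\operatorname{tr}_{\omega_\phi}\omega\). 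Hence
\[
\Delta_{\omega_\phi}Q\ \ge\ \lambda n\,b^{-1/n}(1+F^2)^{1/(2n)}+(\Lambda-\lambda)\operatorname{tr}_{\omega_\phi}\omega-\Lambda n\ \ge\ -\Lambda n .
\]
So \(Q\) is \(\omega_\phi\)-subharmonic up to a bounded error. The point is that \(\Lambda\) dominates \(\lambda\), which absorbs the trace terms.

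\textbf{Step 2: localized ABP.} Let \(x_0\) be a maximum point of \(Q\). Work in a coordinate ball \(B_{d}(x_0)\) of fixed radius and apply ABP to \(\tilde Q:=Q+\theta|z|^2\) with a small \(\theta\). ABP bounds \(\sup\tilde Q-\sup_{\partial B}\tilde Q\) by
\[
C\Bigl(\int_{\Gamma}\frac{\bigl((\Delta_{\omega_\phi}\tilde Q)^-\bigr)^{2n}}{\det(g_\phi)^{-1}\cdot\ldots}\Bigr)^{1/2n},
\]
that is, by an integral over the upper contact set \(\Gamma\) of \((\Delta_{\omega_\phi}\tilde Q)^-\) to the power \(2n\), weighted by \(\det g_\phi=e^F\) in the Euclidean formulation. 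On \(\Gamma\) one has \(\ddbar\tilde Q\le0\). Combined with Step 1, this forces \(\operatorname{tr}_{\omega_\phi}\omega\) and \((1+F^2)^{1/(2n)}\) to be bounded on \(\Gamma\). In particular \(F\) is bounded on \(\Gamma\), so \(e^F\) and the integrand are uniformly controlled. This yields
\[
\sup_X Q\le \sup_{\partial B}Q+C\,d\ \lesssim\ \frac{1}{\mathrm{vol}(B)}\int_X Q^+\,\omega^n+C .
\]

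\textbf{Step 3: integral bound and conclusion.} It then suffices to bound \(\int_X Q^+\omega^n\le\Lambda\int_X(-\phi)\omega^n\), using \(\psi\le0\). Since \(\sup\phi=0\) and \(\phi\) is \(\omega\)-psh, compactness of normalized \(\omega\)-psh functions in \(L^1\) gives a uniform bound on \(\int_X(-\phi)\omega^n\). Combined with the mean-value-type control from Step 2, this gives \(\sup_X Q\le C\).

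The main obstacle is making the ABP step rigorous on the contact set. The integrand must be controlled purely by the lower bound on \(\Delta_{\omega_\phi}Q\), without any a priori bound on \(F\) away from \(\Gamma\). If the contact-set argument does not close directly, the fallback is to replace \(Q\) by \(Q-\delta F\) for a small \(\delta>0\). For this function the exponential term enters with the favourable sign \(+\delta A_te^{h-F}\), so \(F\) is bounded from both sides wherever \(Q-\delta F\) attains its maximum. The price is the extra term \(\delta F\), which then has to be removed using the entropy bound, through \(\int_X e^{F}F^+\omega^n\le C\).
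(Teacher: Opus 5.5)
\textbf{Gap.} The problem is at the end of Step 2 and in Step 3. The inequality \(\sup_{\partial B}Q+Cd\lesssim \mathrm{vol}(B)^{-1}\int_X Q^+\omega^n+C\) is a mean-value inequality for \(\Delta_{\omega_\phi}\), and nothing of that kind is available. Off the contact set you have no control of \(\operatorname{tr}_\omega\omega_\phi\) or \(\operatorname{tr}_{\omega_\phi}\omega\), so \(\Delta_{\omega_\phi}\) is not uniformly elliptic and no Harnack or mean-value estimate can be invoked. ABP in the form you quote only compares \(\sup_B\) with \(\sup_{\partial B}\) plus a contact-set integral, which carries no information at a global maximum \(x_0\). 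Moreover, with \(\tilde Q=Q+\theta|z|^2\) the interior maximum need not exceed the boundary values, so there is no gap for ABP to act on. Your contact-set analysis itself is correct: on the upper contact set \(\Delta_{\omega_\phi}\tilde Q\le0\), and Step 1 then forces \(|F|\le C\) and \(\operatorname{tr}_{\omega_\phi}\omega\le C\) there. So the obstacle you flag is not the real one. The fallback with \(Q-\delta F\) is also unnecessary; removing \(\delta F\) would in any case require the bound on \(\sup_X F\), which the paper derives from this very lemma.

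\textbf{How to close it.} Use the quantitative form of ABP. Assume \(\Lambda>\lambda\); your reduction is fine, since \(\phi\le0\). Fix a coordinate ball \(B=B_d(x_0)\) on which \(\ddbar|z|^2\le C_0\omega\), choose \(\theta\) with \(C_0\theta<\Lambda-\lambda\), and set \(v:=Q-\theta|z-x_0|^2\). Then \(v(x_0)=\sup_XQ\) and \(v\le\sup_XQ-\theta d^2\) on \(\partial B\). For every \(|p|<\theta d/2\), the maximum of \(v-\langle p,z-x_0\rangle\) over \(\overline B\) is attained at an interior point \(x\) with \(\nabla v(x)=p\), \(D^2v(x)\le0\) and \(Q(x)\ge\sup_XQ-\theta d^2/2\). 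At such points Step 1 gives \(|F|\le C\) and \(0\le-\Delta_{\omega_\phi}v\le n\Lambda\). Hence \(\det(-D^2v)\le 2^{2n}\det(-v_{i\bar j})^2\le Ce^{2F}\Lambda^{2n}\le C\). The area formula then shows that \(\{Q\ge\sup_XQ-\theta d^2/2\}\) has measure at least \(c(\theta d)^{2n}\). Since \(Q^+\le\Lambda(-\phi)\) and \(\int_X(-\phi)\,\omega^n\le C\), this yields \(\sup_XQ\le C\). Alternatively, apply ABP to \(e^{\delta Q}\) times a cutoff, as Chen--Cheng do, and bound \(\int_Xe^{2n\delta Q}\omega^n\le\int_Xe^{-2n\delta\Lambda\phi}\omega^n\) by Skoda.

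\textbf{Comparison with the paper.} Once repaired, your route is a genuinely different and more elementary proof. The paper instead splits according to whether \(\sqrt{1+F^2}\gtrless 2b/\delta^n\) to get \(\omega_\phi^n\le\frac12\omega_{\delta\psi}^n+e^{2b/\delta^n}\omega^n\). It then applies the relative Ko{\l}odziej estimate of Darvas--Di Nezza--Lu, with its capacity comparison, to obtain \(\phi\ge\delta\psi-C\). Both arguments use only the two Monge--Amp\`ere equations and the bound \(b\le C\).
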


\begin{proof}
Choose \(0<\delta <\min\left\{1,\frac{\lambda}{2\Lambda} \right\}\). At every point of \(X\), either
\[
        \sqrt{1+F^2}
        \geq
        \frac{2b}{\delta^n}, \qquad\text{or} \qquad         \sqrt{1+F^2}
        <
        \frac{2b}{\delta^n}.
\]
In the first case,
\[
        e^F
        \leq
        \frac12\delta^n
        b^{-1}e^F\sqrt{1+F^2},
\]
while in the second case,
\[
        e^F
        \leq
        e^{\sqrt{1+F^2}}\le 
            \exp
        \left(
        \frac{2b}{\delta^n}
        \right).
\]
Consequently,
\begin{align*}
        \omega_\phi^n
        &=
        e^F\omega^n                                           \\
        &\leq
        \frac12\delta^n
        b^{-1}e^F\sqrt{1+F^2}\,\omega^n
        +
        \exp
        \left(
        \frac{2b}{\delta^n}
        \right)
        \omega^n                                               \\
        &=
        \frac12\delta^n\omega_\psi^n
        +
        \exp
        \left(
        \frac{2b}{\delta^n}
        \right)
        \omega^n.
\end{align*}
Since
\[
        \omega_{\delta\psi}
        =
        (1-\delta)\omega+\delta\omega_\psi
        \geq
        \delta\omega_\psi,
\]
we have
\[
        \omega_{\delta\psi}^n
        \geq
        \delta^n\omega_\psi^n.
\]
Thus
\[
        \omega_\phi^n
        \leq
        \frac12\omega_{\delta\psi}^n
        +
        \exp
        \left(
        \frac{2b}{\delta^n}
        \right)
        \omega^n.
\]
Set \(f_\delta:=\exp\left(\frac{2b}{\delta^n}\right)\). We apply the relative Ko{\l}odziej estimate \cite[Theorem~3.3]{DarvasDiNezzaLu2021} with
\[
        u=\phi,
        \qquad
        \chi=\delta\psi,
        \qquad
        a=\frac12,
        \qquad
        f=f_\delta.
\]
We verify its hypothesis. Here \(\operatorname{Cap}_{v}\) denotes the relative Monge--Amp\`ere capacity. Applying \cite[Proposition~3.10]{DarvasDiNezzaLu2021} to the model potential \(0\), with \(p=2\), gives
\[
        \int_E f_\delta\,\omega^n
        \leq
        C_0V_\alpha^{-2}
        \bigl[
        \operatorname{Cap}_{0}(E)
        \bigr]^2
\]
for every Borel set \(E\subset X\). Moreover, \cite[Lemma~4.2]{DarvasDiNezzaLu2021}, applied with \(\varepsilon=1-\delta\) and \(w=\psi\), gives
\[
        \operatorname{Cap}_{0}(E)
        \leq
        (1-\delta)^{-n}
        \operatorname{Cap}_{\delta\psi}(E).
\]
Consequently,
\[
        \int_E f_\delta\,\omega^n
        \leq
        C_0V_\alpha^{-2}(1-\delta)^{-2n}
        \bigl[
        \operatorname{Cap}_{\delta\psi}(E)
        \bigr]^2.
\]
Since \(\phi\) is smooth, \(P[\phi]=0\), while \(\delta\psi\leq0=P[\phi]\). Thus \(P[\phi]\) is less singular than \(\delta\psi\), as required in \cite[Theorem~3.3]{DarvasDiNezzaLu2021}. Since \(\delta\) is fixed and \(b\leq C\), the \(L^2\)-norm of \(f_\delta\) is uniformly bounded. The relative Ko{\l}odziej estimate therefore yields
\[
        \phi
        \geq
        \delta\psi-C.
\]
Because \(\psi\leq0\) and \(\lambda-\Lambda\delta>0\), we obtain
\begin{align*}
        \lambda\psi-\Lambda\phi
        &\leq
        (\lambda-\Lambda\delta)\psi
        +
        \Lambda C                                          
        \leq
        \Lambda C,
\end{align*}
which completes the proof.
\end{proof}

\begin{lemma}
\label{lem:upper-C0-bounds}
There exists \(C=C(X,\omega,\varepsilon_0,\Omega_0,\Omega,C_E)>0\) such that
\[
        \lVert\phi\rVert_{C^0(X)}
        +
        \lVert\psi\rVert_{C^0(X)}
        +
        \sup_XF
        \leq
        C.
\]
\end{lemma}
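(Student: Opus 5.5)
The plan is the maximum-principle argument of Chen--Cheng, in the form adapted by Deruelle--Di Nezza: combine $F$, $\psi$ and $\phi$ into one test function, use the sign of the exponential term in \eqref{eq:modified-coupled-system}, and close with Lemma~\ref{lem:phi-psi-comparison}. First I bound $\sup_X F$ in terms of $-\psi$. Then I bound $\psi$ using Skoda's exponential integrability and Ko{\l}odziej's estimate. Finally $\sup_X F$ and $\phi$ follow.

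\emph{Step 1.} Fix a small $\varepsilon\in(0,1)$ and a large $\Lambda>0$, and set
\[
Q:=F+\varepsilon\psi-\Lambda\phi .
\]
Compute $\Delta_{\omega_\phi}Q$ term by term, writing $\operatorname{tr}$ for $\operatorname{tr}_{\omega_\phi}$.
\begin{itemize}
\item For $F$, use the second equation of \eqref{eq:modified-coupled-system}. Since $A_t e^{h-F}\ge 0$ and $B_t\ge 0$, drop the exponential term and use $\operatorname{tr}_{\omega_\phi}\Ric(\omega)\ge -C_1\operatorname{tr}_{\omega_\phi}\omega$. This gives
\[
\Delta_{\omega_\phi}F\;\ge\;-C_1\operatorname{tr}_{\omega_\phi}\omega-A_t e^{h-F}.
\]
\item For $\phi$, note $\Delta_{\omega_\phi}(-\Lambda\phi)=-\Lambda n+\Lambda\operatorname{tr}_{\omega_\phi}\omega$.
\item For $\psi$, use the arithmetic--geometric mean inequality and \eqref{eq:auxiliary-MA-equation}:
\[
\Delta_{\omega_\phi}(\varepsilon\psi)=\varepsilon\operatorname{tr}_{\omega_\phi}\omega_\psi-\varepsilon\operatorname{tr}_{\omega_\phi}\omega\;\ge\;\varepsilon n\, b^{-1/n}(1+F^2)^{1/(2n)}-\varepsilon\operatorname{tr}_{\omega_\phi}\omega .
\]
\end{itemize}
The exponential term has the wrong sign for a lower bound, so Step~1 must instead be run at a maximum point $x_0$ of $Q$, using only $\Delta_{\omega_\phi}Q(x_0)\le 0$. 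Choose $\Lambda\ge C_1+\varepsilon$ so the trace terms are nonnegative. At $x_0$ this yields
\[
\varepsilon n\,b^{-1/n}(1+F^2)^{1/(2n)}\;\le\;\Lambda n+A_t e^{h-F}.
\]
Split into two cases. If $F(x_0)\ge \sup_X h$, the right side is at most $\Lambda n+A_t$. Since $b$ and $A_t$ are uniformly bounded, this forces $F(x_0)\le C$. Otherwise $F(x_0)<\sup_X h\le C_0$ directly. Either way $F(x_0)\le C$. Lemma~\ref{lem:phi-psi-comparison}, applied with $\lambda=\varepsilon$, gives $\varepsilon\psi-\Lambda\phi\le C$ everywhere, so $Q(x_0)\le C$ and hence $Q\le C$ on $X$. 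Since $\phi\le 0$, this gives
\[
F\le C-\varepsilon\psi .
\]

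\emph{Step 2.} Normalize $\sup_X\psi=0$. By Skoda's uniform integrability theorem there is $c_0=c_0(X,\omega)>0$ with $\int_X e^{-c_0\psi}\omega^n\le C$. From Step~1,
\[
e^F\sqrt{1+F^2}\;\le\;C e^{-2\varepsilon\psi}.
\]
Take $\varepsilon<c_0/4$. Then the density of $\omega_\psi^n$ in \eqref{eq:auxiliary-MA-equation} has uniformly bounded $L^2(\omega^n)$ norm, because $b\ge 1$. Ko{\l}odziej's $L^\infty$ estimate then gives $\|\psi\|_{C^0}\le C$. Feeding this back into Step~1 gives $\sup_X F\le C$. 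So $\omega_\phi^n=e^F\omega^n$ has bounded density, and Ko{\l}odziej again, with $\sup_X\phi=0$, gives $\|\phi\|_{C^0}\le C$.

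\emph{Main obstacle.} The delicate point is Step~1: the term $-A_te^{h-F}$ in $\Delta_{\omega_\phi}F$ blocks a lower bound, so everything has to be done at the maximum of $Q$, with the case split on $F(x_0)$ relative to $\sup_X h$. One must also check that every constant ($b$, $A_t$, $B_t$, $\|h_t\|_{C^0}$, the Skoda constant) is uniform in $t$. This holds by the bounds collected in Section~\ref{subsec:coupled-system} and the estimate $b\le C$ derived above.
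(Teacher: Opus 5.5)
Your proposal is correct and follows the paper's proof step for step. Both use the test function $F+\lambda\psi-\Lambda\phi$ with the AM--GM lower bound $\operatorname{tr}_{\omega_\phi}\omega_\psi\ge n b^{-1/n}(1+F^2)^{1/(2n)}$ at its maximum point. Both then apply Lemma~\ref{lem:phi-psi-comparison} to reach $F\le C-\lambda\psi$, and finish with Skoda integrability and Ko{\l}odziej's estimate.

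The differences are cosmetic. The paper fixes $\lambda<\min\{1,\beta/4\}$ at the outset. You should do the same with $\varepsilon<\min\{1,c_0/4\}$, since the constants in your Step~1 depend on $\varepsilon$. The paper also bounds $\phi$ first, via $\int_X e^{2F}\omega^n\le C$, and then $\psi$. You bound $\psi$ first, then $\sup_X F$, then $\phi$ from a bounded density. Either order works.
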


\begin{proof}
Choose \(\rho_0>0\) such that \(\Ric(\omega)\geq-\rho_0\,\omega\) and set \(\Lambda:=\rho_0+1\). Recall that Skoda's integrability \cite{Skoda1972} on compact K\"ahler manifolds tells us that there exist constants \(\beta=\beta(X,\omega)>0\) and \(C=C(X,\omega)>0\) such that
\[
        \int_X
        e^{-\beta(u-\sup_Xu)}
        \,\omega^n
        \leq
        C
\]
for every \(u\in\PSH(X,\omega)\) (see \cite[Theorem~8.11]{GuedjZeriahi2017}). Fix \(0<\lambda<\min\left\{1,\beta/4\right\}\), and consider 
\[
H:=F+\lambda\psi-\Lambda\phi.
\] 
We then compute
\begin{align*}
        \Delta_{\omega_\phi}H
        &=
        \operatorname{tr}_{\omega_\phi}\Ric(\omega)
        +
        B_t
        -
        A_t e^{h-F}
        +
        \lambda
        \operatorname{tr}_{\omega_\phi}\omega_\psi          
        -
        \lambda
        \operatorname{tr}_{\omega_\phi}\omega
        -
        n\Lambda
        +
        \Lambda
        \operatorname{tr}_{\omega_\phi}\omega.
\end{align*}
Since \(B_t\geq0\) and \(\Lambda-\rho_0-\lambda=1-\lambda \geq0\), it follows that
\begin{equation}  \label{eq:H-laplacian}
\begin{aligned}
        \Delta_{\omega_\phi}H
        &\geq
        -
        A_t e^{h-F}
        -
        n\Lambda
        +
        \lambda
        \operatorname{tr}_{\omega_\phi}\omega_\psi\\
        & \geq
        -
        A_t e^{h-F}
        -
        n\Lambda
        +
        n\lambda b^{-\frac1n}
        \left(1+F^2\right)^{\frac{1}{2n}}.
\end{aligned}
\end{equation}
where the last inequality is given by the arithmetic--geometric mean inequality.

Let \(x_0\in X\) be a maximum point of \(H\). We first show that \(F(x_0)\) is uniformly bounded from above. If \(F(x_0)\leq0\), then there is nothing to prove.  Suppose that \(F(x_0)>0\). Since \(A_t\) and \(h\) are uniformly
bounded,
\[
        A_t e^{h(x_0)-F(x_0)}
        \leq
        C.
\]
Since
\(\Delta_{\omega_\phi}H(x_0)\leq0\), together with \eqref{eq:H-laplacian} and \(1\leq b\leq C\), we obtain
\[
        \left(1+F(x_0)^2\right)^{\frac{1}{2n}}
        \leq
        C.
\]
Thus, in either case,
\[
        F(x_0)
        \leq
        C.
\]
Next, Lemma~\ref{lem:phi-psi-comparison} gives
\[
        \lambda\psi-\Lambda\phi
        \leq
        C.
\]
Therefore, for every \(x \in X\),
\[
       H(x)\le H(x_0)
        =
        F(x_0)
        +
        \lambda\psi(x_0)
        -
        \Lambda\phi(x_0)
        \leq
        C.
\]
Consequently, since \(\phi\leq0\),
\begin{equation}
        F
        \leq
        C-\lambda\psi+\Lambda\phi
        \leq
        C-\lambda\psi,
        \label{eq:F-upper-by-psi}
\end{equation}

Because \(4\lambda\leq\beta\) and \(\sup_X\psi=0\), Skoda's integrability gives
\begin{equation}
        \int_X
        e^{-4\lambda\psi}\,\omega^n
        \leq
        C.
        \label{eq:Skoda-psi}
\end{equation}
It follows from \eqref{eq:F-upper-by-psi}, using \(\psi\leq0\), that
\[
        \int_Xe^{2F}\omega^n
        \leq
        C\int_Xe^{-2\lambda\psi}\omega^n
        \leq
        C.
\]
Hence \(F \in L^2(X,\omega^n)\). Applying Ko{\l}odziej's \(L^p\)-estimate \cite{Kolodziej1998} to
\[
        \omega_\phi^n
        =
        e^F\omega^n
\]
and using \(\sup_X\phi=0\), we obtain
\[
        \lVert\phi\rVert_{C^0(X)}
        \leq
        C.
\]

We next estimate the density in \eqref{eq:auxiliary-MA-equation}. Since \(b\geq1\),
\[
        \left(
        b^{-1}e^F\sqrt{1+F^2}
        \right)^2
        \leq
        e^{2F}(1+F^2).
\]
The elementary inequality
\[
        e^{2x}(1+x^2)
        \leq
        \frac{1}{2}\bigl(1+e^{4x}\bigr),
        \qquad
        x\in\mathbb R,
\]
together with \eqref{eq:F-upper-by-psi}, gives
\begin{align*}
        \int_X
        \left(
        b^{-1}e^F\sqrt{1+F^2}
        \right)^2
        \omega^n
        &\leq
        \frac{1}{2}
        \int_X
        \bigl(
        1+e^{4F}
        \bigr)
        \omega^n                                             \\
        &\leq
        C
        \int_X
        \bigl(
        1+e^{-4\lambda\psi}
        \bigr)
        \omega^n                                             \\
        &\leq
        C,
\end{align*}
where the last inequality follows from \eqref{eq:Skoda-psi}. The density in \eqref{eq:auxiliary-MA-equation} has total mass \(V_\alpha\) and is uniformly bounded in \(L^2(X,\omega^n)\). Applying Ko{\l}odziej's \(L^p\)-estimate again to \eqref{eq:auxiliary-MA-equation} yields
\[
        \lVert\psi\rVert_{C^0(X)}
        \leq
        C.
\]
Combining with \eqref{eq:F-upper-by-psi}, we conclude that
\[
        \sup_XF
        \leq
        C,
\]
which completes the proof.
\end{proof}

\begin{lemma}\label{lem:F-lower-C0}
Suppose that \(\lVert\phi\rVert_{C^0(X)}\leq C_\phi\). Then there exists \(C=C(X,\omega,\varepsilon_0,\Omega_0,\Omega,C_\phi)>0\) such that
\[
        \inf_XF
        \geq
        -C.
\]
\end{lemma}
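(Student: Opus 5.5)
We prove the lower bound on \(F\) with a maximum principle applied to a combination of \(F\) and \(\phi\). The combination is chosen so that the \(\operatorname{tr}_{\omega_\phi}\omega\) terms have a favorable sign, and only the exponential term \(A_t e^{h-F}\) can be large at a minimum point. This mirrors the upper bound argument in Lemma~\ref{lem:upper-C0-bounds}, but with the roles reversed: there the exponential term was harmless and the auxiliary potential \(\psi\) provided the good term. Here the exponential term itself gives the good term, because \(A_t\geq\overline S_\alpha>0\) and \(h\) is uniformly bounded.

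First choose \(\rho_1\geq0\) with \(\Ric(\omega)\leq\rho_1\,\omega\), and set \(\Lambda:=\rho_1\). Consider
\[
H:=F+\Lambda\phi .
\]
Using the second equation of \eqref{eq:modified-coupled-system} and \(\Delta_{\omega_\phi}\phi=n-\operatorname{tr}_{\omega_\phi}\omega\), we get
\[
\Delta_{\omega_\phi}H
=\operatorname{tr}_{\omega_\phi}\Ric(\omega)+B_t-A_te^{h-F}+n\Lambda-\Lambda\operatorname{tr}_{\omega_\phi}\omega
\leq B_t+n\Lambda-A_te^{h-F}.
\]
The inequality holds because \(\operatorname{tr}_{\omega_\phi}\Ric(\omega)\leq\rho_1\operatorname{tr}_{\omega_\phi}\omega\), so the choice \(\Lambda=\rho_1\) makes the trace terms cancel in sign.

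Let \(x_0\) be a minimum point of \(H\). There \(\Delta_{\omega_\phi}H(x_0)\geq0\), hence
\[
A_te^{h(x_0)-F(x_0)}\leq B_t+n\Lambda .
\]
The coefficients satisfy \(A_t\geq\overline S_\alpha\) and \(B_t\leq\frac{1-\varepsilon_0}{\varepsilon_0}\overline S_\alpha\), and \(\sup_t\|h_t\|_{C^0}\leq C_0\). Therefore \(e^{-F(x_0)}\leq C\), that is, \(F(x_0)\geq -C\) with \(C=C(X,\omega,\varepsilon_0,\Omega_0,\Omega)\).

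Finally, for every \(x\in X\), the minimality of \(x_0\) together with \(\|\phi\|_{C^0}\leq C_\phi\) gives
\[
F(x)=H(x)-\Lambda\phi(x)\geq H(x_0)-\Lambda\phi(x)=F(x_0)+\Lambda\phi(x_0)-\Lambda\phi(x)\geq -C-2\Lambda C_\phi .
\]
This is the claimed bound. The only delicate point is getting the signs right in the choice of \(\Lambda\), which must control \(\operatorname{tr}_{\omega_\phi}\Ric(\omega)\) from above. Since \(\omega_\phi\) is not yet known to be uniformly equivalent to \(\omega\), this trace cannot be bounded directly and has to be absorbed as above. No entropy bound or capacity estimate is needed for this step.
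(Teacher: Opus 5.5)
Your proof is correct and is essentially the paper's argument. The paper also applies the minimum principle to \(F+\Lambda_1\phi\) with \(\Ric(\omega)\le\rho_1\omega\), bounds \(A_te^{h-F}\) at the minimum point, and then uses \(\lVert\phi\rVert_{C^0}\le C_\phi\) to pass the bound to \(F\). The only difference is that it takes \(\Lambda_1=\rho_1+1\) instead of \(\rho_1\), which adds a harmless extra \(-\operatorname{tr}_{\omega_\phi}\omega\) term that is then discarded.
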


\begin{proof}
Choose \(\rho_1>0\) such that \(\Ric(\omega)\leq\rho_1\omega\) and set \(\Lambda_1:=\rho_1+1\). Consider \(K:=F+\Lambda_1\phi\). Using \eqref{eq:modified-coupled-system}, we compute
\begin{align*}
        \Delta_{\omega_\phi}K
        &=
        \operatorname{tr}_{\omega_\phi}\Ric(\omega)
        +
        B_t
        -
        A_t e^{h-F}
        +
        n\Lambda_1
        -
        \Lambda_1
        \operatorname{tr}_{\omega_\phi}\omega              \\
        &\leq
        B_t
        +
        n\Lambda_1
        -
        \operatorname{tr}_{\omega_\phi}\omega
        -
        A_t e^{h-F}                                        \\
        &\leq
        C
        -n\left(\omega^n/\omega_\phi^n\right)^{1/n}-
        A_t e^{h-F}\\
        &\leq C-ne^{-F/n}-
        A_t e^{h-F}\\
        &\le C-
        A_t e^{h-F},
\end{align*}
where we use the arithmetic--geometric mean inequality in the third inequality.

Let \(y_0\in X\) be a minimum point of \(K\). Then
\[
        A_t e^{h(y_0)-F(y_0)}
        \leq C-\Delta_{\omega_\phi}K(y_0)\le C.
\]
Since \(A_t\geq\overline S_\alpha>0\) and \(h\) is uniformly bounded, we obtain
\[
        F(y_0)\geq -C.
\]
Consequently, for every \(x\in X\),
\[
        F(x)+\Lambda_1\phi(x)
        \geq
        F(y_0)+\Lambda_1\phi(y_0) \ge -C,
\]
which completes the proof.
\end{proof}

\begin{proof}[Proof of Proposition~\ref{prop:uniform-C0-estimate}] 
Lemma~\ref{lem:upper-C0-bounds} gives
\[
        \lVert\phi_t\rVert_{C^0(X)}
        +
        \sup_XF_t
        \leq
        C.
\]
Applying Lemma~\ref{lem:F-lower-C0} then yields
\[
        \inf_XF_t
        \geq
        -C.
\]
Therefore,
\[
        \lVert\phi_t\rVert_{C^0(X)}
        +
        \lVert F_t\rVert_{C^0(X)}
        \leq
        C,
\]
uniformly for \(t\in[0,1]\).
\end{proof}


\subsection{Metric equivalence and higher-order estimates}
\label{subsec:higher-order-estimates}

Once the \(C^0\)-bound is obtained, we can apply Chen--Cheng's result \cite{ChenCheng2021I} to derive the uniform higher-order estimates.

\begin{theorem}
\label{thm:uniform-smooth-estimates}
Let \(t\in[0,1]\), and let \((\phi_t,F_t)\) be a smooth solution of \eqref{eq:modified-coupled-system} normalized by \(\sup_X\phi_t=0\). Assume that
\[
        \Ent_\omega(\phi_t)
        =
        \int_X
        F_t e^{F_t}\omega^n
        \leq
        C_E.
\]
Then there exists a constant \(C=C(X,\omega,\varepsilon_0,\Omega,\Omega_0,C_E)>0\) such that
\begin{equation*}
        C^{-1}\omega
        \leq
        \omega_{\phi_t}
        \leq
        C\omega.
        \label{eq:uniform-metric-equivalence}
\end{equation*}
Moreover, for every integer \(k\geq0\), there exists a constant  \(C_k=C(X,\omega,k,\varepsilon_0,\Omega,\Omega_0,C_E)>0\) such that
\begin{equation*}
        \lVert\phi_t\rVert_{C^k(X,\omega)}
        +
        \lVert F_t\rVert_{C^k(X,\omega)}
        \leq
        C_k.
        \label{eq:uniform-Ck-estimates}
\end{equation*}
\end{theorem}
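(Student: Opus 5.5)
The plan is to reduce the statement to the Chen--Cheng a priori estimates for the coupled system \eqref{eq:modified-coupled-system}, using Proposition~\ref{prop:uniform-C0-estimate} as the zeroth-order input. First, Proposition~\ref{prop:uniform-C0-estimate} gives a constant, independent of \(t\), bounding \(\lVert\phi_t\rVert_{C^0}+\lVert F_t\rVert_{C^0}\). Then I rewrite the second equation of \eqref{eq:modified-coupled-system} in the form Chen--Cheng treat, namely
\[
\Delta_{\omega_{\phi_t}}F_t=-f_t+\operatorname{tr}_{\omega_{\phi_t}}\Ric(\omega),
\qquad
f_t:=A_te^{h_t-F_t}-B_t .
\]
By the \(C^0\) bound on \(F_t\) and the uniform bounds on \(A_t,B_t,h_t\), the right-hand side \(f_t\) is uniformly bounded in \(C^0\). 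Its gradient satisfies \(|\nabla f_t|\le C(|\nabla h_t|+|\nabla F_t|)\). This is the same structure as in Chen--Cheng's treatment of \(\Scal(\omega_\phi)=f\) after the \(C^0\) step, except that the prescribed function now depends on \(F\).

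Next I will obtain the \(W^{2,p}\)-type estimate on \(\operatorname{tr}_\omega\omega_{\phi_t}\) and the gradient estimate on \(F_t\). I will follow \cite[Section~3--4]{ChenCheng2021I}. Consider a quantity of the form \(e^{-\kappa(F+\lambda\phi)}(n+\Delta_\omega\phi)\) and integrate by parts. The term coming from \(f_t\) has the form \(\int \Delta_{\omega_\phi}F\cdot(\ldots)\), and it is handled by substituting the equation. The new contribution \(A_te^{h-F}\nabla F\) has a favorable structure: the exponential is bounded, so it only produces terms \(C|\nabla F|\) already absorbed in Chen--Cheng's iteration. The gradient bound for \(F\) comes from their maximum principle on \(e^{\cdot}|\nabla F|^2_{\omega_\phi}\), once \(n+\Delta\phi\in L^p\) for large \(p\). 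I then get \(\operatorname{tr}_\omega\omega_{\phi_t}\le C\). Combined with \(\omega_{\phi_t}^n=e^{F_t}\omega^n\) and \(|F_t|\le C\), this gives \(C^{-1}\omega\le\omega_{\phi_t}\le C\omega\).

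Finally, with uniform metric equivalence the first equation is a uniformly elliptic complex Monge--Amp\`ere equation. The second equation has bounded coefficients and bounded right-hand side, so De Giorgi--Nash--Moser gives \(F_t\in C^{\gamma}\) uniformly. Evans--Krylov/Caffarelli (or the \(C^{2,\gamma}\) result for Monge--Amp\`ere with H\"older data) then gives \(\phi_t\in C^{2,\gamma}\). With H\"older coefficients, Schauder theory on the \(F\)-equation gives \(F_t\in C^{2,\gamma}\), since the right-hand side involves \(e^{-F}\), \(h_t\), and \(\operatorname{tr}_{\omega_\phi}\Ric(\omega)\), all now \(C^\gamma\). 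A standard bootstrap yields all \(C^k\) bounds, with constants depending only on the listed data because \(\sup_t\lVert h_t\rVert_{C^k}\le C_k\).

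The main obstacle is the second step: checking that the Chen--Cheng integral estimate tolerates the \(F\)-dependent right-hand side \(A_te^{h-F}-B_t\). I expect it to, because \(A_t\), \(B_t\), and \(e^{-F}\) are already uniformly bounded, but I would verify carefully that no term of the form \(|\nabla F|^2\) with a bad sign appears when differentiating \(e^{h-F}\).
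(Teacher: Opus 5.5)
Your proposal is correct and follows the paper's proof. You define $f_t:=A_te^{h_t-F_t}-B_t$, bound it in $C^0$ using Proposition~\ref{prop:uniform-C0-estimate}, and view \eqref{eq:modified-coupled-system} as Chen--Cheng's system with $\eta=\Ric(\omega)$. You then bootstrap via De Giorgi--Nash--Moser and a $C^{2,\gamma}$ estimate for complex Monge--Amp\`ere with H\"older data; the paper instead bootstraps from Chen--Cheng's $W^{4,p}\times W^{2,p}$ bound via Sobolev embedding and Schauder, and both routes work.

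The obstacle you flag does not exist. The paper applies \cite[Theorems~4.1 and 4.3]{ChenCheng2021I} unchanged, because those estimates involve $f$ only through $\lVert f\rVert_{C^0}$ and never differentiate it. Chen--Cheng get the gradient bound on $F$ by an integral argument, not the maximum principle you describe. So the dependence of $f_t$ on $F_t$ plays no role until the Schauder step, where $f_t$ simply inherits the regularity of $F_t$.
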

\begin{proof}
Define \(f_t:=A_t e^{h_t-F_t}-B_t\). Then \(f_t=\Scal(\omega_{\phi_t})\). The uniform bounds for \(A_t\), \(B_t\), and \(h_t\), together with the \(C^0\)-bound for \(F_t\) from Proposition~\ref{prop:uniform-C0-estimate}, imply that
\begin{equation}
        \lVert f_t\rVert_{C^0(X)}
        \leq
        C.
        \label{eq:f-t-C0-bound}
\end{equation}
The coupled system can therefore be written as
\begin{equation}
\begin{cases}
        \displaystyle
        \omega_{\phi_t}^n
        =
        e^{F_t}\omega^n,
        \\[5pt]
        \displaystyle
        \Delta_{\omega_{\phi_t}}F_t
        =
        -f_t
        +
        \operatorname{tr}_{\omega_{\phi_t}}\Ric(\omega).
\end{cases}
        \label{eq:Chen-Cheng-form}
\end{equation}
This is the system considered in \cite[Equations~(1.1)--(1.2)]{ChenCheng2021I}, with
\[
        f=f_t,
        \qquad
        \eta=\Ric(\omega).
\]
Here \(\eta\) is fixed and smooth, while \(f_t\) is uniformly bounded in \(C^0\) by \eqref{eq:f-t-C0-bound}. By
\cite[Theorem~4.1]{ChenCheng2021I}, Proposition \ref{prop:uniform-C0-estimate} and \eqref{eq:f-t-C0-bound} imply
\[
        \operatorname{tr}_{\omega}
        \omega_{\phi_t}
        =
        n+\Delta_\omega\phi_t
        \leq
        C.
\]
Together with the uniform bound for \(F_t\), this yields uniform metric equivalence.

We then apply \cite[Theorem~4.3]{ChenCheng2021I}. For every \(1<p<\infty\), it gives
\begin{equation}
        \lVert\phi_t\rVert_{W^{4,p}(X,\omega)}
        +
        \lVert F_t\rVert_{W^{2,p}(X,\omega)}
        \leq
        C_p,
        \label{eq:uniform-W-estimates}
\end{equation} 
where \(C_p\) is independent of \(t\). Choose \(p>2n\), and fix \(0<\gamma<1-\frac{2n}{p}\). By the Sobolev embedding theorem,
\[
        \lVert\phi_t\rVert_{C^{3,\gamma}(X,\omega)}
        +
        \lVert F_t\rVert_{C^{1,\gamma}(X,\omega)}
        \leq
        C.
\]
Since
\[
        f_t=A_t e^{h_t-F_t}-B_t,
\]
where \(A_t\) and \(B_t\) are uniformly bounded constants and \(h_t\) is uniformly bounded in every \(C^m\)-norm, the family \(f_t\) is uniformly bounded in \(C^{1,\gamma}\). The second equation in \eqref{eq:Chen-Cheng-form} is therefore a uniformly elliptic equation with uniformly \(C^{1,\gamma}\)-bounded coefficients and right-hand side. Hence, Schauder estimates give
\[
        \lVert F_t\rVert_{C^{3,\gamma}(X)}
        \leq
        C.
\]
Differentiating the logarithmic form
\[
        \log\frac{\omega_{\phi_t}^n}{\omega^n}=F_t
\]
gives linear uniformly elliptic equations for the first derivatives of \(\phi_t\), with coefficients \((\omega_{\phi_t})^{i\bar j}\). Schauder estimates then improve the regularity of \(\phi_t\). Returning to the second equation improves the regularity of \(F_t\), and hence that of \(f_t\). Repeating this argument yields, for every \(k\geq0\),
\[
        \lVert\phi_t\rVert_{C^k(X,\omega)}
        +
        \lVert F_t\rVert_{C^k(X,\omega)}
        \leq
        C_k,
\]
which completes the proof.
\end{proof}


\subsection{Closedness of the parameter set}

Recall the parameter set
\[
        \mathcal T
        :=
        \left\{
        t\in[0,1]:
        \eqref{eq:modified-scalar-measure-path}
        \text{ admits a smooth solution}
        \right\}.
\]
By construction, \(0\in\mathcal T\), and Proposition~\ref{prop:modified-path-openness} shows that \(\mathcal T\) is relatively open in \([0,1]\).

\begin{proposition}
\label{prop:modified-path-closedness}
Assume that \(\mathcal F_{\omega,\Omega}\) is \(d_1\)-coercive on \(\mathcal E^1_0(X,\omega)\). Then \(\mathcal T\) is closed in \([0,1]\).
\end{proposition}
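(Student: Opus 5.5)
To prove closedness, I take a sequence \(t_j\in\mathcal T\) with \(t_j\to t_\infty\in[0,1]\) and let \(\phi_{t_j}\in\Hpot_\omega\) be smooth solutions of \eqref{eq:modified-scalar-measure-path} at \(t=t_j\). The goal is to extract a smooth limit solving the equation at \(t_\infty\).

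The first step is to normalize the solutions by \(E_\omega(\phi_{t_j})=0\), which is allowed because the equation is invariant under constants. Proposition~\ref{prop:path-variational-bounds}, applied with \(I=\{t_j\}_j\) and using the assumed \(d_1\)-coercivity of \(\mathcal F_{\omega,\Omega}\), then gives a uniform bound \(\Ent_\omega(\phi_{t_j})\le C_E\). The entropy depends only on \(\omega_{\phi}^n\), so the bound is unchanged if I renormalize instead by \(\sup_X\phi_{t_j}=0\). With that normalization, Proposition~\ref{prop:uniform-C0-estimate} and Theorem~\ref{thm:uniform-smooth-estimates} apply to the pairs \((\phi_{t_j},F_{t_j})\). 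They give the metric equivalence \(C^{-1}\omega\le\omega_{\phi_{t_j}}\le C\omega\) and \(C^k\) bounds for \(\phi_{t_j}\) and \(F_{t_j}\), all independent of \(j\). This is the key point: every constant in those results depends only on \(X,\omega,\varepsilon_0,\Omega_0,\Omega,C_E\), not on \(t\).

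Next comes the compactness step. By Arzel\`a--Ascoli and a diagonal argument, a subsequence converges in \(C^\infty\) to some \(\phi_\infty\). The metric equivalence passes to the limit, so \(\omega_{\phi_\infty}\ge C^{-1}\omega>0\) and hence \(\phi_\infty\in\Hpot_\omega\). Both \(a_t\) and \(\Omega_t\) depend continuously (in fact affinely) on \(t\). The operator \(\phi\mapsto\bigl(a_t\Scal(\omega_\phi)+(1-a_t)\overline S_\alpha\bigr)\omega_\phi^n\) is continuous in \(C^4\), with uniform non-degeneracy of the metrics. So passing to the limit in \(\mathcal R(t_j,\phi_{t_j})=0\) gives \(\mathcal R(t_\infty,\phi_\infty)=0\), and therefore \(t_\infty\in\mathcal T\).

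I do not expect a real obstacle here, since the hard analysis is already contained in Proposition~\ref{prop:uniform-C0-estimate} and Theorem~\ref{thm:uniform-smooth-estimates}. The only point needing care is consistency of normalizations. Proposition~\ref{prop:path-variational-bounds} uses \(E_\omega(\phi)=0\), while the a priori estimates use \(\sup_X\phi=0\). I will check explicitly that the system \eqref{eq:modified-coupled-system}, the quantity \(F_t\), and the entropy are all unchanged under adding constants, so the entropy bound carries over between the two normalizations.
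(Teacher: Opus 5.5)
Your proposal is correct and follows essentially the same route as the paper's proof. The paper normalizes by \(E_\omega=0\) to get the entropy bound from Proposition~\ref{prop:path-variational-bounds}, shifts to \(\sup_X\phi_j=0\), applies Theorem~\ref{thm:uniform-smooth-estimates}, extracts a \(C^\infty\)-limit by Arzel\`a--Ascoli, uses metric equivalence to place the limit in \(\Hpot_\omega\), and passes to the limit in the equation, exactly as you describe.
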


\begin{proof}
Let \(t_j\in\mathcal T\),  with \(t_j\longrightarrow t_\infty\in[0,1]\). For every \(j\), choose a smooth solution \(\widetilde\phi_j\) of \eqref{eq:modified-scalar-measure-path} at \(t=t_j\), normalized by
\(E_\omega(\widetilde\phi_j)=0\). Proposition~\ref{prop:path-variational-bounds} gives
\[
        \sup_j
        \Ent_\omega(\widetilde\phi_j)
        <\infty.
\]
Set \(\phi_j:= \widetilde\phi_j-\sup_X\widetilde\phi_j\).
Then \(\sup_X\phi_j=0\) and \(\phi_j\) is still a solution at \(t=t_j\) with
\[
        \sup_j
        \Ent_\omega(\phi_j)
        <\infty.
\]
Write \(F_j:= \log\frac{\omega_{\phi_j}^n}{\omega^n}\). Theorem~\ref{thm:uniform-smooth-estimates} gives, for every integer
\(k\geq0\),
\[
        \lVert\phi_j\rVert_{C^k(X,\omega)}
        +
        \lVert F_j\rVert_{C^k(X,\omega)}
        \leq
        C_k,
\]
where \(C_k\) is independent of \(j\).

By the Arzel\`a--Ascoli theorem and a diagonal argument, after passing to a subsequence, there exists \(\phi_\infty\in C^\infty(X,\mathbb R)\) such that \(\phi_j\rightarrow\phi_\infty\) in \(C^\infty(X)\). The uniform metric equivalence in Theorem~\ref{thm:uniform-smooth-estimates} implies \(\omega_{\phi_\infty}\geq C^{-1}\omega\), and hence  \(\phi_\infty\in\Hpot_\omega\).

Since \(\phi_j\to\phi_\infty\) in \(C^\infty\), the scalar curvatures \(\Scal(\omega_{\phi_j})\) and the volume forms \(\omega_{\phi_j}^n\) converge smoothly. Moreover, \(a_{t_j}\to a_{t_\infty}\) and \(\Omega_{t_j}\to\Omega_{t_\infty}\) smoothly on \(X\). Hence we may pass to the limit in \eqref{eq:modified-scalar-measure-path}.
This gives
\[
        \left(
        a_{t_\infty}\Scal(\omega_{\phi_\infty})
        +
        (1-a_{t_\infty})\overline S_\alpha
        \right)
        \omega_{\phi_\infty}^n
        =
        \Omega_{t_\infty}.
\]
Thus \(t_\infty\in\mathcal T\), proving that \(\mathcal T\) is closed.
\end{proof}

We can now prove the existence argument.

\begin{theorem}
\label{thm:coercivity-implies-existence}
Suppose that \(\mathcal F_{\omega,\Omega}\) is \(d_1\)-coercive on \(\mathcal E^1_0(X,\omega)\).
Then the equation \eqref{eq:scalar-measure} admits a unique smooth solution, up to addition of constants, in \(\Hpot_\omega\).
\end{theorem}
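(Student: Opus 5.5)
The plan is a standard continuity argument along the modified path \eqref{eq:modified-scalar-measure-path}. We show that the parameter set \(\mathcal T\) is nonempty, open and closed in \([0,1]\); connectedness of \([0,1]\) then gives \(\mathcal T=[0,1]\). In particular \(1\in\mathcal T\), and since \(a_1=1\) and \(\Omega_1=\Omega\), the path equation at \(t=1\) is exactly \eqref{eq:scalar-measure}.

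First, \(0\in\mathcal T\) by construction. The potential \(\phi_0\) solving \(\eta^n=\Omega/\overline S_\alpha\) was obtained from Yau's theorem, and \(\Omega_0=q_0\eta^n\) was chosen precisely so that \(\phi_0\) solves \eqref{eq:modified-scalar-measure-path} at \(t=0\). Relative openness of \(\mathcal T\) is Proposition~\ref{prop:modified-path-openness}. The key input there is that at any smooth solution the linearization \(P_{t,\phi}\) has kernel \(\mathbb R\); this holds because of the positive quadratic form involving \(\Omega_t>0\). Closedness is Proposition~\ref{prop:modified-path-closedness}, and this is the only step that uses the \(d_1\)-coercivity hypothesis. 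That hypothesis feeds into Proposition~\ref{prop:path-variational-bounds}, which yields a uniform entropy bound along the path. The entropy bound then gives the \(C^0\)-estimate of Proposition~\ref{prop:uniform-C0-estimate}, and Theorem~\ref{thm:uniform-smooth-estimates} upgrades this to uniform \(C^k\)-bounds.

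The one point to check is that Proposition~\ref{prop:path-variational-bounds} applies on the relevant subset. We take \(I=\mathcal T\), or any sequence \(t_j\in\mathcal T\), with solutions normalized by \(E_\omega=0\). The proposition only requires \(\mathcal F_{\omega,\Omega}\) to be coercive, which is exactly our hypothesis; the coercivity of the \(\mathcal J\)-part is supplied by Lemma~\ref{lem:top-J-coercive}. Combining these, \(\mathcal T\) is clopen and nonempty, so \(\mathcal T=[0,1]\). This gives a smooth \(\phi\in\Hpot_\omega\) with \(\Scal(\omega_\phi)\,\omega_\phi^n=\Omega\).

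Uniqueness up to constants follows directly from Corollary~\ref{cor:uniqueness}. Two solutions are both critical points, hence global minimizers of \(\mathcal F_{\omega,\Omega}\) by Lemma~\ref{lem:critical-point-minimizer}. Strict convexity in the sense of Proposition~\ref{prop:weak-convexity} then forces their difference to be constant.

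The main obstacle, the a priori estimates behind closedness, has already been handled in Section~\ref{sec:closedness}. The remaining work is bookkeeping: checking that the constants in those estimates depend only on \(X,\omega,\varepsilon_0,\Omega_0,\Omega\) and the entropy bound, and not on \(t\).
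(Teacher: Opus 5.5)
Your proposal is correct and follows the paper's proof essentially verbatim. It runs the continuity method along the modified path with $0\in\mathcal T$, openness from Proposition~\ref{prop:modified-path-openness}, closedness from Proposition~\ref{prop:modified-path-closedness} (where coercivity enters through the uniform entropy bound of Proposition~\ref{prop:path-variational-bounds}), and uniqueness from Corollary~\ref{cor:uniqueness}. The only bookkeeping you left implicit is the switch from the normalization $E_\omega(\phi_t)=0$ to $\sup_X\phi_t=0$ before invoking the $C^0$-estimate; this is harmless because $\Ent_\omega$ is invariant under adding constants.
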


\begin{proof}
The set \(\mathcal T\) is nonempty because \(0\in\mathcal T\).
It is relatively open by Proposition~\ref{prop:modified-path-openness} and closed by Proposition~\ref{prop:modified-path-closedness}. Since \([0,1]\) is connected, we have \(\mathcal T=[0,1]\).

In particular, \(1\in\mathcal T\). By construction, \(a_1=1\) and \(\Omega_1=\Omega\). Hence, at \(t=1\),
\eqref{eq:modified-scalar-measure-path} becomes
\[
        \Scal(\omega_\phi)\,\omega_\phi^n
        =
        \Omega.
\]
Uniqueness follows from Corollary~\ref{cor:uniqueness}.
\end{proof}

Combining the above results, we obtain the following variational characterization.

\begin{corollary}
\label{cor:analytic-PSC-stability-and-solvability}
Let \(\alpha\) be a K\"ahler class with positive total scalar curvature, and let \(\omega \in \alpha\) be a reference K\"ahler metric. Then the following are equivalent:
\begin{enumerate}[label=\textup{(\roman*)}]
\item for one, equivalently every, \(\Omega\in\mathcal V_\alpha^+\), the functional \(\mathcal F_{\omega,\Omega}\) is \(d_1\)-coercive on \(\mathcal E^1_0(X,\omega)\);
\item for one, equivalently every,
\(\Omega\in\mathcal V_\alpha^+\), the equation \eqref{eq:scalar-measure} admits a smooth solution in \(\Hpot_\omega\).
\end{enumerate}
Whenever these conditions hold, the solution of \eqref{eq:scalar-measure} is unique up to addition of constants for every \(\Omega\in\mathcal V_\alpha^+\).
\end{corollary}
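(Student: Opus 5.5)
The plan is to assemble the corollary from results already established, keeping track of the ``one, equivalently every'' quantifiers. I will first settle the equivalence of the two quantifier versions within each condition, then prove the two implications between (i) and (ii).

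For (i), the ``one'' and ``every'' versions agree by Corollary~\ref{cor:target-independent-coercivity}(1). Changing \(\Omega\) alters \(\mathcal F_{\omega,\Omega}\) only by \(\int_X\phi\,(\Omega_1-\Omega_2)\), and this term is uniformly bounded on \(\mathcal E^1(X,\omega)\) by Proposition~\ref{prop:bounded-target-difference}.

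Next, I will show that (ii) for one \(\Omega\) implies (i). Suppose \eqref{eq:scalar-measure} has a smooth solution for some \(\Omega_1\in\vap\). Theorem~\ref{thm:existence-implies-coercivity} gives \(d_1\)-coercivity of \(\mathcal F_{\omega,\Omega_1}\) on \(\mathcal E^1_0(X,\omega)\). By the preceding paragraph, \(\mathcal F_{\omega,\Omega}\) is then \(d_1\)-coercive for every \(\Omega\in\vap\). This is the content of Corollary~\ref{cor:psc-implies-stability}, since a solution yields a PSC metric by Proposition~\ref{prop:EL}.

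Then I will show that (i) implies (ii) for every \(\Omega\). Assume coercivity holds for one, and hence for every, admissible measure. Fix an arbitrary \(\Omega\in\vap\). Then \(\mathcal F_{\omega,\Omega}\) is \(d_1\)-coercive, and Theorem~\ref{thm:coercivity-implies-existence} produces a smooth solution of \eqref{eq:scalar-measure} for this \(\Omega\). That theorem runs the continuity path built from the data \(\Omega\) and \(\Omega_0\), using openness (Proposition~\ref{prop:modified-path-openness}) and closedness (Proposition~\ref{prop:modified-path-closedness}). Since \(\Omega\) was arbitrary, we get the ``every'' version of (ii). The ``every'' version trivially implies the ``one'' version, and the ``one'' version implies (i) by the previous step. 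So all four statements are equivalent.

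Finally, uniqueness up to additive constants for every \(\Omega\in\vap\) is Corollary~\ref{cor:uniqueness}. That result needs no coercivity hypothesis: it rests on Lemma~\ref{lem:critical-point-minimizer} and the strict convexity statement of Proposition~\ref{prop:weak-convexity}.

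There is no genuine analytic obstacle here, since all the hard work lives in Theorems~\ref{thm:existence-implies-coercivity} and~\ref{thm:coercivity-implies-existence}. The only point requiring care is that Theorem~\ref{thm:coercivity-implies-existence} needs coercivity of \(\mathcal F_{\omega,\Omega}\) for the specific target \(\Omega\) being solved for. That is exactly why target-independence of coercivity must be invoked before applying it to an arbitrary \(\Omega\).
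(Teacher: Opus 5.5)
Your proposal is correct and follows the paper's proof essentially step for step. You use Corollary~\ref{cor:target-independent-coercivity} for target-independence of coercivity, Theorem~\ref{thm:existence-implies-coercivity} for (ii)$\Rightarrow$(i), Theorem~\ref{thm:coercivity-implies-existence} applied to each $\Omega$ for the ``every'' form of (ii), and Corollary~\ref{cor:uniqueness} for uniqueness, with your remark about transferring coercivity to the specific target before running the continuity path matching the role of Corollary~\ref{cor:target-independent-coercivity} in the paper.
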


\begin{proof}
Assume first that \(\mathcal F_{\omega,\Omega_0}\) is \(d_1\)-coercive for some \(\Omega_0\in\mathcal V_\alpha^+\). By Corollary~\ref{cor:target-independent-coercivity}, \(\mathcal F_{\omega,\Omega}\) is \(d_1\)-coercive for every \(\Omega\in\mathcal V_\alpha^+\). Theorem~\ref{thm:coercivity-implies-existence} then gives a smooth solution for every admissible measure. Uniqueness follows from Corollary~\ref{cor:uniqueness}.

Conversely, suppose that the equation admits a smooth solution for some \(\Omega_0\in\mathcal V_\alpha^+\).
Theorem~\ref{thm:existence-implies-coercivity} shows that \(\mathcal F_{\omega,\Omega_0}\) is \(d_1\)-coercive.
\end{proof}


\section{Uniform geodesic stability and \(d_1\)-coercivity}
\label{sec:geodesic-stability}
In this section, we prove that uniform geodesic stability of the \(\mathcal F\)-functional is equivalent to \(d_1\)-coercivity. The proof adapts the compactness and diagonal construction used in the proof of \cite[Theorem~6.5]{BDL2017}: from a sequence violating coercivity, one constructs a finite-energy geodesic ray with controlled radial slope. Fix an admissible measure \(\Omega\in\vap\). We keep the normalization \(\mathcal F_{\omega,\Omega}(0)=0\) in this section without further mention.


\subsection{Finite-energy geodesic rays with controlled radial slope}

We first establish a local lower bound and a compactness property for \(\mathcal F_{\omega,\Omega}\) on \(d_1\)-bounded subsets of \(\mathcal E^1_0(X,\omega)\).

\begin{proposition}
\label{prop:F-bounded-sublevel-compactness}
Let \(R,M>0\). Then the set
\[
        \left\{
        \phi\in\mathcal E^1_0(X,\omega):
        d_1(0,\phi)\leq R,\quad
        \mathcal F_{\omega,\Omega}(\phi)\leq M
        \right\}
\]
is relatively compact in \((\mathcal E^1_0(X,\omega),d_1)\). Moreover, for every \(R>0\), there exists
\(C_R=C_R(X,\omega,\Omega,R)>0\) such that
\begin{equation}
        \mathcal F_{\omega,\Omega}(\phi)
        \geq
        -C_R
        \label{eq:F-lower-bound-on-d1-ball}
\end{equation}
for every \(\phi\in\mathcal E^1_0(X,\omega)\), with \(d_1(0,\phi)\leq R\).

\end{proposition}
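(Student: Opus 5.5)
The plan is to reduce both assertions to the entropy decomposition \eqref{eq:F-entropy-decomposition},
\[
        \mathcal F_{\omega,\Omega}(\phi)
        =
        \Ent_\omega(\phi)
        -
        nE_\omega^{\Ric(\omega)}(\phi)
        +
        L_{\omega,\Omega}(\phi),
\]
which holds on \(\mathcal E^1_0(X,\omega)\) because the right-hand side is, by \cite[Theorem~4.7]{BDL2017}, the extension of \(\mathcal M_\omega\) used in \(\mathcal F_{\omega,\Omega}\), and \(E_\omega(\phi)=0\) there. I would then combine it with the standard compactness theorem of Berman--Darvas--Lu for sets with bounded \(d_1\)-distance and bounded entropy.

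First, the lower bound \eqref{eq:F-lower-bound-on-d1-ball}. The entropy is nonnegative, since by Jensen's inequality \(\Ent_\omega(\phi)\ge V_\alpha\log 1=0\) for a probability-normalized density with total mass \(V_\alpha\). Lemma~\ref{lem:lower-order-energy-boundedness}, applied with \(\theta=\Ric(\omega)\) and \(\nu=\Omega\), bounds \(|E_\omega^{\Ric(\omega)}(\phi)|+|L_{\omega,\Omega}(\phi)|\le C_R\) whenever \(d_1(0,\phi)\le R\). Together these give \(\mathcal F_{\omega,\Omega}(\phi)\ge -nC_R-C_R\), which is the claimed bound.

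Second, relative compactness. On the set in question we have \(\mathcal F_{\omega,\Omega}(\phi)\le M\). The same two-sided bounds then give
\[
        \Ent_\omega(\phi)\le M+(n+1)C_R.
\]
So the set lies in \(\{\phi\in\mathcal E^1(X,\omega): d_1(0,\phi)\le R,\ \Ent_\omega(\phi)\le C'\}\). By \cite[Theorem~2.17]{BDL2017}, or the compactness statement used in the proof of \cite[Theorem~6.5]{BDL2017}, this set is \(d_1\)-relatively compact in \(\mathcal E^1(X,\omega)\). That result rests on the fact that bounded entropy gives uniform integrability of the densities \(\omega_\phi^n/\omega^n\) (de la Vallée-Poussin), plus an \(L^1\)-compactness result for \(\omega\)-psh functions. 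Together these upgrade weak convergence to \(d_1\)-convergence.

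Finally, I would check that limits stay in the slice. Since \(E_\omega\) is \(d_1\)-continuous, any \(d_1\)-limit of elements of the set still satisfies \(E_\omega=0\). Hence relative compactness holds in \(\mathcal E^1_0(X,\omega)\) itself, which is a closed subset of the complete space \(\mathcal E^1(X,\omega)\).

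The main obstacle is the compactness step. One must make sure the cited BDL result applies with a \(d_1\)-bound rather than a \(\sup\)-normalization, and with entropy relative to \(\omega^n\). If a normalization mismatch arises, I would pass between \(\sup_X\phi=0\) and \(E_\omega(\phi)=0\) using the standard estimate \(|\sup_X\phi-E_\omega(\phi)/V_\alpha|\le C(1+d_1(0,\phi))\), which follows from \cite[Theorem~5.5]{Darvas2015} and the \(L^1\)-bounds already used in Lemma~\ref{lem:lower-order-energy-boundedness}.
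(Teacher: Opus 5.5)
Your proposal is correct and follows the paper's proof. The lower bound comes from the entropy decomposition together with Lemma~\ref{lem:lower-order-energy-boundedness}, and compactness comes from a Berman--Darvas--Lu compactness theorem, with the $d_1$-continuity of $E_\omega$ keeping limits in the slice. The only cosmetic difference is that the paper bounds $\mathcal M_\omega(\phi)=\mathcal F_{\omega,\Omega}(\phi)-L_{\omega,\Omega}(\phi)\le M+B_R$ and cites \cite[Corollary~4.8]{BDL2017} (bounded $d_1$-distance and bounded K-energy), whereas you bound the entropy directly and invoke the entropy-compactness result on which that corollary itself rests.
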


\begin{proof}
On the normalized space \(\mathcal E^1_0(X,\omega)\), recall the entropy decomposition
\eqref{eq:F-entropy-decomposition} 
\[
        \mathcal F_{\omega,\Omega}(\phi)
        =
        \Ent_\omega(\phi)
        -
        nE_\omega^{\Ric(\omega)}(\phi)
        +
        L_{\omega,\Omega}(\phi).
\]
By Lemma~\ref{lem:lower-order-energy-boundedness}, there exists \(B_R=B_R(X,\omega,\Omega,R)>0\)
such that for \(\phi\in\mathcal E^1_0(X,\omega)\) with \(d_1(0,\phi)\leq R\),
\[
        \left|
        E_\omega^{\Ric(\omega)}(\phi)
        \right|
        +
        \left|
        L_{\omega,\Omega}(\phi)
        \right|
        \leq
        B_R.
\]
Since \(\Ent_\omega(\phi)\geq0\), the decomposition formula gives
\[
        \mathcal F_{\omega,\Omega}(\phi)
        \geq
        -n\left|E_\omega^{\Ric(\omega)}(\phi)\right|
        -
        \left|L_{\omega,\Omega}(\phi)\right|
        \geq
        -nB_R.
\]
This proves \eqref{eq:F-lower-bound-on-d1-ball}, after setting
\(C_R=nB_R\).

We now prove the compactness. Let \(\{\phi_j\}\subset\mathcal E^1_0(X,\omega)\) satisfy
\[
        d_1(0,\phi_j)\leq R,
        \qquad
        \mathcal F_{\omega,\Omega}(\phi_j)\leq M.
\]
Since \(E_\omega(\phi_j)=0\), Lemma~\ref{lem:lower-order-energy-boundedness} then gives
\[
        \mathcal M_\omega(\phi_j)
        =
        \mathcal F_{\omega,\Omega}(\phi_j)
        -
        L_{\omega,\Omega}(\phi_j)
        \leq
        M+B_R.
\]
Therefore
\cite[Corollary~4.8]{BDL2017} gives a \(d_1\)-convergent subsequence. Since \(E_\omega\) is \(d_1\)-continuous, its limit belongs to \(\mathcal E^1_0(X,\omega)\).
\end{proof}

The following argument shows that if \(\mathcal F_{\omega,\Omega}\) fails to satisfy a linear lower bound with prescribed slope \(\sigma\in\mathbb R\), then there exists a unit-speed finite-energy geodesic ray whose \(\mathcal F_{\omega,\Omega}\)-radial slope is at most \(\sigma\). The proof is a quantitative adaptation of the argument of \cite[Theorem~2.16 and Example~2.17]{BermanBoucksomJonsson2021}.

\begin{proposition}
\label{prop:ray-extraction}
Let \(\sigma\in\mathbb R\). Suppose that there is no constant \(C\in\mathbb R\) such that
\[
        \mathcal F_{\omega,\Omega}(\phi)
        \geq
        \sigma\,d_1(0,\phi)-C
\]
for every \(\phi\in\mathcal E^1_0(X,\omega)\). Then there exists a unit-speed finite-energy \(d_1\)-geodesic ray \(\ell\), with \(\ell_0=0\), such that for every \(t\geq0\),
\[
        \mathcal F_{\omega,\Omega}(\ell_t)
        \leq
        \sigma t.
\]
\end{proposition}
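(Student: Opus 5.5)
The plan is the standard Darvas--He / Berman--Darvas--Lu ray construction, adapted to the linear slope \(\sigma\). From the failure of the bound, for each \(j\in\mathbb N\) we pick \(\phi_j\in\mathcal E^1_0(X,\omega)\) with
\[
        \mathcal F_{\omega,\Omega}(\phi_j)
        <
        \sigma\,d_1(0,\phi_j)-j .
\]
We first note that \(d_j:=d_1(0,\phi_j)\to\infty\). Otherwise, along a subsequence \(d_j\le R\). Then \eqref{eq:F-lower-bound-on-d1-ball} gives \(-C_R\le\sigma R-j\), which is impossible for large \(j\). Hence we may assume that \(d_j\ge1\) and \(d_j\uparrow\infty\).

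Let \([0,d_j]\ni t\mapsto\ell^j_t\) be the unit-speed finite-energy geodesic from \(0\) to \(\phi_j\). Since \(E_\omega\) is affine along it and vanishes at both ends, \(\ell^j_t\in\mathcal E^1_0(X,\omega)\). By geodesic convexity of \(\mathcal F_{\omega,\Omega}\) and \(\mathcal F_{\omega,\Omega}(0)=0\), for \(t\in[0,d_j]\) we have
\[
        \mathcal F_{\omega,\Omega}(\ell^j_t)
        \le
        \frac{t}{d_j}\,\mathcal F_{\omega,\Omega}(\phi_j)
        \le
        \frac{t}{d_j}\bigl(\sigma d_j-j\bigr)
        \le
        \sigma t .
\]
So on each fixed interval \([0,T]\), once \(d_j\ge T\), the points \(\ell^j_t\) lie in \(\{d_1(0,\cdot)\le T,\ \mathcal F_{\omega,\Omega}\le |\sigma|T\}\). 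This set is relatively compact by Proposition~\ref{prop:F-bounded-sublevel-compactness}.

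The diagonal step proceeds as follows. For rational \(t\ge0\), compactness and a diagonal argument give a subsequence, still denoted \(\ell^j\), with \(\ell^j_t\to\ell_t\) in \(d_1\) for every \(t\in\mathbb Q_{\ge0}\). The maps \(t\mapsto\ell^j_t\) are \(1\)-Lipschitz, so the limit is \(1\)-Lipschitz on \(\mathbb Q_{\ge0}\). It therefore extends uniquely to \([0,\infty)\), and \(\ell^j_t\to\ell_t\) locally uniformly in \(t\).

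Passing to the limit in \(d_1(\ell^j_s,\ell^j_t)=|t-s|\) shows that \(\ell\) is a unit-speed \(d_1\)-isometric curve with \(\ell_0=0\), contained in \(\mathcal E^1_0(X,\omega)\) by continuity of \(E_\omega\). Lower semicontinuity of \(\mathcal F_{\omega,\Omega}\) gives \(\mathcal F_{\omega,\Omega}(\ell_t)\le\liminf_j\mathcal F_{\omega,\Omega}(\ell^j_t)\le\sigma t\), which is the required slope bound.

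The main obstacle is showing that the limit curve \(\ell\) is a genuine finite-energy geodesic ray, that is, the psh geodesic obtained from the homogeneous Monge--Amp\`ere envelope construction, and not merely a \(d_1\)-metric geodesic. The approach is to use the characterization of finite-energy geodesic segments: for \(0\le a<b\), the segment \(\ell^j|_{[a,b]}\) is the finite-energy geodesic joining \(\ell^j_a\) and \(\ell^j_b\). The stability of finite-energy geodesics under \(d_1\)-convergence of their endpoints, \cite[Proposition~4.3]{BDL2017}, then gives that the geodesic joining \(\ell_a\) to \(\ell_b\) is the \(d_1\)-limit of \(\ell^j|_{[a,b]}\), and hence coincides with \(\ell|_{[a,b]}\). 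This follows since the endpoint-dependence estimate \(d_1(u_t,v_t)\le\max\{d_1(u_0,v_0),d_1(u_1,v_1)\}\) for geodesics with nearby endpoints controls the difference.

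Since this holds for all \(a<b\), \(\ell\) is a finite-energy geodesic ray. This is exactly the construction of \cite[Theorem~6.5]{BDL2017} and \cite[Theorem~2.16]{BermanBoucksomJonsson2021}, which we invoke with \(\mathcal F_{\omega,\Omega}\) in place of the \(K\)-energy. It uses only convexity, lower semicontinuity and the compactness of Proposition~\ref{prop:F-bounded-sublevel-compactness}.
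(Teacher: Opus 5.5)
Your proposal is correct and follows the paper's route step by step. You pick violating \(\phi_j\), show \(d_1(0,\phi_j)\to\infty\) using the local lower bound, and get \(\mathcal F_{\omega,\Omega}\le\sigma t\) on the unit-speed segments from \(0\) by convexity. You then extract a limit curve via the sublevel-set compactness and a diagonal argument, identify it as a finite-energy geodesic ray through the endpoint-stability result \cite[Proposition~4.3]{BDL2017}, and conclude by lower semicontinuity. One small slip: when \(\sigma<0\) and \(d_j\le R\), one has \(\sigma d_j\ge\sigma R\), so the contradiction should read \(-C_R\le\max\{\sigma,0\}R-j\) (the paper's \(\sigma_+R-j\)); this still fails for large \(j\).
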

\begin{proof}
By our assumption, for every integer \(j\geq1\), there exists \(\phi_j\in\mathcal E^1_0(X,\omega)\) such that
\[
        \mathcal F_{\omega,\Omega}(\phi_j)
        \leq
        \sigma d_1(0,\phi_j)-j.
\]
Set \(r_j:=d_1(0,\phi_j)\) and \(\sigma_+:=\max\{\sigma,0\}\). We first claim that \(r_j\to+\infty\). Otherwise, after passing to a subsequence, we would have \(r_j\leq R\) for some \(R>0\). It follows from \eqref{eq:F-lower-bound-on-d1-ball} in Proposition~\ref{prop:F-bounded-sublevel-compactness} that
\[
        -C_R\leq\mathcal F_{\omega,\Omega}(\phi_j)\leq \sigma r_j-j\leq
        \sigma_+R-j,
\]
which is impossible for \(j\) sufficiently large.

Let \([0,r_j]\ni t\mapsto\phi_{j,t}\) be the unit-speed finite-energy geodesic segment joining \(0\) to
\(\phi_j\). Since \(E_\omega\) is affine along finite-energy geodesics and \(E_\omega(0)=E_\omega(\phi_j)=0\), the entire segment is contained in \(\mathcal E^1_0(X,\omega)\). By geodesic convexity, for \(0\leq t\leq r_j\),
\begin{align*}
        \mathcal F_{\omega,\Omega}(\phi_{j,t})
        &\leq
        \frac{t}{r_j}\mathcal F_{\omega,\Omega}(\phi_j)\\
        &\leq
        \sigma t-\frac{jt}{r_j}
        \leq
        \sigma t.
\end{align*}

Fix \(T>0\). For all sufficiently large \(j\), \(r_j\geq T\), and for every \(t\in[0,T]\),
\[
        d_1(0,\phi_{j,t})=t\leq T,
        \qquad
        \mathcal F_{\omega,\Omega}(\phi_{j,t})
        \leq
        \sigma t
        \leq
        \sigma_+T.
\]
Thus the images of the curves \([0,T]\ni t\longmapsto\phi_{j,t}\) are contained in
\[
        \left\{
        \phi\in\mathcal E^1_0(X,\omega):
        d_1(0,\phi)\leq T,\quad
        \mathcal F_{\omega,\Omega}(\phi)
        \leq \sigma_+T+1
        \right\},
\]
which is relatively compact by Proposition~\ref{prop:F-bounded-sublevel-compactness}. Since these curves are all \(1\)-Lipschitz, the Arzel\`a--Ascoli theorem, followed by a diagonal argument over \(T\in\mathbb N\), gives a subsequence and a curve \(\ell:[0,+\infty)\rightarrow\mathcal E^1_0(X,\omega)\) such that, for every \(T>0\),
\[
        \sup_{0\leq t\leq T}
        d_1(\phi_{j,t},\ell_t)
        \longrightarrow0.
\]
Since each segment is parameterized with unit speed, passing to the limit gives
\[
        d_1(\ell_s,\ell_t)=|s-t|,
        \qquad s,t\geq0.
\]
In particular, \(\ell_0=0\) and \(\ell\) has unit speed.

It remains to verify that \(\ell\) is a finite-energy geodesic ray. Fix \(T>0\), and let \([0,1]\ni s\mapsto\gamma_s\) be the finite-energy geodesic segment joining \(0\) to \(\ell_T\). The curves \([0,1]\ni s\longmapsto\phi_{j,sT}\) are the finite-energy geodesic segments joining \(0\) to \(\phi_{j,T}\), and
\(d_1(\phi_{j,T},\ell_T)\rightarrow0\). Hence, for every \(s\in[0,1]\), \cite[Proposition~4.3]{BDL2017} gives
\(d_1(\phi_{j,sT},\gamma_s)\rightarrow0\). On the other hand, the locally uniform convergence above gives
\(d_1(\phi_{j,sT},\ell_{sT})\rightarrow0\). Therefore \(\gamma_s=\ell_{sT}\) for \(s\in[0,1]\). Thus the restriction of \(\ell\) to every compact interval is a finite-energy geodesic segment, and hence \(\ell\) is a
finite-energy geodesic ray.

Finally, the \(d_1\)-lower semicontinuity of \(\mathcal F_{\omega,\Omega}\) yields,
for every \(t\geq0\),
\[
        \mathcal F_{\omega,\Omega}(\ell_t)
        \leq
        \liminf_{j\to\infty}\mathcal F_{\omega,\Omega}(\phi_{j,t})
        \leq
        \sigma t.
\]
Since \(t\mapsto\mathcal F_{\omega,\Omega}(\ell_t)\) is convex and \(\mathcal F_{\omega,\Omega}(\ell_0)=0\), its radial slope is well-defined, and
\[
        \mathcal F_{\omega,\Omega}\{\ell\}
        =
        \lim_{t\to\infty}
        \frac{\mathcal F_{\omega,\Omega}(\ell_t)}{t}
        \leq
        \sigma,
\]
which completes the proof.
\end{proof}

\subsection{Completion of the variational characterization}

We now prove the converse implication of Corollary~\ref{cor:analytic-PSC-stability-implies-radial-stability},  namely that uniform geodesic stability implies \(d_1\)-coercivity, and complete the proof of Theorem~\ref{thm:variational-main}.

\begin{theorem}
\label{thm:radial-stability-implies-coercivity}
Suppose that there exists a constant \(\delta_0>0\) such that
\[
        \mathcal F_\alpha\{\ell\}
        \geq
        \delta_0\,v_1(\ell)
\]
for every nonconstant finite-energy \(d_1\)-geodesic ray \(\ell\) starting from \(0\) and parameterized with constant \(d_1\)-speed. Then there exists a constant \(\delta>0\), independent of \(\Omega\), such that for every \(\Omega\in\mathcal V_\alpha^+\) there exists \(C>0\) satisfying
\[
        \mathcal F_{\omega,\Omega}(\phi)
        \geq
        \delta\,d_1(0,\phi)-C
\]
for every \(\phi\in\mathcal E^1_0(X,\omega)\).
\end{theorem}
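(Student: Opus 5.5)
We argue by contradiction using Proposition~\ref{prop:ray-extraction}, and we take \(\delta:=\delta_0/2\), which does not depend on \(\Omega\). Fix \(\Omega\in\mathcal V_\alpha^+\) and suppose that no constant \(C\) gives
\[
        \mathcal F_{\omega,\Omega}(\phi)\geq \delta\,d_1(0,\phi)-C
        \qquad\text{for all }\phi\in\mathcal E^1_0(X,\omega).
\]
Proposition~\ref{prop:ray-extraction}, applied with \(\sigma=\delta\), then produces a unit-speed finite-energy geodesic ray \(\ell\) in \(\mathcal E^1_0(X,\omega)\) with \(\ell_0=0\) and \(\mathcal F_{\omega,\Omega}(\ell_t)\leq\delta t\) for all \(t\geq0\). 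This ray is nonconstant, since \(v_1(\ell)=1\), and it starts at \(0\) with \(\mathcal F_{\omega,\Omega}(0)=0<+\infty\). Its radial slope is therefore defined and satisfies \(\mathcal F_{\omega,\Omega}\{\ell\}\leq\delta\).

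The next step is to identify \(\mathcal F_{\omega,\Omega}\{\ell\}\) with \(\mathcal F_\alpha\{\ell\}\). By \eqref{eq:target-independent-radial-functional} and Corollary~\ref{cor:target-independent-coercivity}(2), the slope of a ray based at the origin of \(\mathcal E^1_0(X,\omega)\) does not depend on the admissible measure. The hypothesis therefore applies to \(\ell\) and gives \(\mathcal F_{\omega,\Omega}\{\ell\}\geq\delta_0 v_1(\ell)=\delta_0\). Combined with the previous bound this yields \(\delta_0\leq\delta_0/2\), which is a contradiction. Hence the required constant \(C=C(\Omega)\) exists. If one prefers not to halve the constant, any \(\delta<\delta_0\) works in the same way.

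The main point to check is that Proposition~\ref{prop:ray-extraction} applies for every \(\Omega\). It relies on the compactness in Proposition~\ref{prop:F-bounded-sublevel-compactness}, which holds for each fixed \(\Omega\), so the argument goes through for every admissible measure. Uniformity of \(\delta\) in \(\Omega\) is automatic, because \(\delta\) depends only on \(\delta_0\). Only \(C\) depends on \(\Omega\), and this is exactly what the statement allows.
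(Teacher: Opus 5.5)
Your proposal is correct and matches the paper's proof essentially line for line. Both set \(\delta=\delta_0/2\), apply Proposition~\ref{prop:ray-extraction} with \(\sigma=\delta\) to obtain a unit-speed ray from \(0\) with \(\mathcal F_{\omega,\Omega}\{\ell\}\leq\delta\), and then use \(\mathcal F_\alpha\{\ell\}=\mathcal F_{\omega,\Omega}\{\ell\}\geq\delta_0\) to reach a contradiction.
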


\begin{proof}
Fix \(\delta:=\delta_0/2\) and let \(\Omega\in\mathcal V_\alpha^+\). We claim that there exists \(C>0\) such that
\[
        \mathcal F_{\omega,\Omega}(\phi)
        \geq
        \delta\,d_1(0,\phi)-C
\]
for every \(\phi\in\mathcal E^1_0(X,\omega)\).

Suppose otherwise. Proposition~\ref{prop:ray-extraction}, applied with \(\sigma=\delta\), gives a unit-speed nonconstant finite-energy \(d_1\)-geodesic ray \(\ell\) with \(\ell(0)=0\) and \( v_1(\ell)=1\) such that
\[
        \mathcal F_{\omega,\Omega}\{\ell\}
        \leq
        \delta.
\]
Therefore
\[
        \delta_0
        \leq
        \mathcal F_\alpha\{\ell\}
        =
        \mathcal F_{\omega,\Omega}\{\ell\}
        \leq
        \delta
        =
        \frac{\delta_0}{2},
\]
which is impossible.

Hence there exists a constant \(C=C(\delta,\Omega)>0\) such that
\[
        \mathcal F_{\omega,\Omega}(\phi)
        \geq
        \delta\,d_1(0,\phi)-C
\]
for every \(\phi\in\mathcal E^1_0(X,\omega)\). Since \(\Omega\in\mathcal V_\alpha^+\) was arbitrary, this proves \(d_1\)-coercivity for every admissible target.
\end{proof}

We can now complete the proof of Theorem~\ref{thm:variational-main}.
\begin{proof}[Proof of Theorem~\ref{thm:variational-main}]
The equivalence \(\textup{(i)}\iff \textup{(ii)}\) is clear. Indeed, a solution for a positive admissible target has positive scalar curvature, while a positive scalar curvature metric \(\omega_*\in\alpha\) solves the equation with \(\Omega_*=\Scal(\omega_*)\,\omega_*^n\in\mathcal V_\alpha^+\). The equivalences  \(\textup{(ii)}\iff\textup{(iii)}\iff\textup{(iv)}\)
follow from Corollary~\ref{cor:analytic-PSC-stability-and-solvability}. The equivalence \(\textup{(iv)}\iff\textup{(v)}\) is obtained by Corollary~\ref{cor:analytic-PSC-stability-implies-radial-stability}, and Theorem~\ref{thm:radial-stability-implies-coercivity}. Finally, the uniqueness of solutions follows from Corollary~\ref{cor:uniqueness}.
\end{proof}

We finally prove Corollary~\ref{cor:global-parametrization-psc}.

\begin{proof}[Proof of Corollary~\ref{cor:global-parametrization-psc}]
We use the \(C^\infty\)-topology on both spaces.  Fix any reference K\"ahler metric \(\omega\in\alpha\), we identify \(\mathcal H_\omega^{\mathrm{psc}}/\mathbb R\) with the slice
\[
        \mathcal H_{\omega,0}^{\mathrm{psc}}
        :=
        \left\{
        \phi\in\mathcal H_\omega^{\mathrm{psc}}:
        E_\omega(\phi)=0
        \right\}.
\]

First, note that the map
\[
        \mathscr S_\omega:
        \mathcal H_\omega^{\mathrm{psc}}/\mathbb R
        \longrightarrow
        \mathcal V_\alpha^+,
        \qquad
        [\phi]\longmapsto
        \Scal(\omega_\phi)\,\omega_\phi^n
\]
is injective by the uniqueness statement in Theorem~\ref{thm:variational-main}. It is surjective because, under the assumption that \(\alpha\) contains a positive scalar curvature K\"ahler metric, Theorem~\ref{thm:variational-main} implies that for every \(\Omega\in\mathcal V_\alpha^+\) the equation
\[
        \Scal(\omega_\phi)\,\omega_\phi^n=\Omega
\]
admits a smooth solution.  Hence \(\mathscr S_\omega\) is bijective.

The map \(\mathscr S_\omega\) is continuous in the \(C^\infty\)-topology, since scalar curvature and the volume form depend smoothly on the potential. It remains to see that its inverse is continuous. Fix \(\Omega_*\in\mathcal V_\alpha^+\), and let \(\phi_*\in\mathcal H_{\omega,0}^{\mathrm{psc}}\) be the unique normalized solution of
\[
        \mathscr S_\omega(\phi_*)=\Omega_*.
\]
For \(k\geq0\) and \(0<\gamma<1\), consider the Banach manifolds
\[
\begin{aligned}
        \mathscr H_{k,\gamma}
        &:=
        \left\{
        \phi\in C^{k+4,\gamma}(X,\mathbb R):
        \omega_\phi>0,\quad E_\omega(\phi)=0
        \right\},                                          \\
        \mathscr A_{k,\gamma}
        &:=
        \left\{
        \Theta\in
        C^{k,\gamma}
        \bigl(X,\Lambda_{\mathbb R}^{n,n}\bigr):
        \int_X\Theta=S_\alpha
        \right\}.
\end{aligned}
\]
By \eqref{eq:AY-variation}, the tangent space of \(\mathscr H_{k,\gamma}\) at \(\phi_*\) is
\[
        T_{\phi_*}\mathscr H_{k,\gamma}
        =
        C^{k+4,\gamma}_0(X,\omega_{\phi_*}),
\]
Similarly,
\[
\begin{aligned}
        T_{\Omega_*}\mathscr A_{k,\gamma}
        =
        \left\{
        \Theta:
        \int_X\Theta=0
        \right\}                                           
        =
        \left\{
        f\,\omega_{\phi_*}^n:
        f\in C^{k,\gamma}_0(X,\omega_{\phi_*})
        \right\}.
\end{aligned}
\]
Under these identifications, the linearization is
\[
        D\mathscr S_\omega(\phi_*)[u]
        =
        P_{\omega_{\phi_*}}u\,\omega_{\phi_*}^n.
\]

Proposition~\ref{prop:automatic-nondegeneracy} gives the isomorphism for \(k=0\). Since \(P_{\omega_{\phi_*}}\) is a fourth-order elliptic operator with smooth coefficients, elliptic regularity gives an isomorphism
\[
        P_{\omega_{\phi_*}}:
        C^{k+4,\gamma}_0(X,\omega_{\phi_*})
        \longrightarrow
        C^{k,\gamma}_0(X,\omega_{\phi_*})
\]
for every \(k\geq0\). The Banach-space implicit function theorem therefore gives neighborhoods
\[
        \phi_*\in U_{k,\gamma}\subset\mathscr H_{k,\gamma},
        \qquad
        \Omega_*\in V_{k,\gamma}\subset\mathscr A_{k,\gamma},
\]
and a smooth map
\[
        \Psi_{k,\gamma}:
        V_{k,\gamma}
        \longrightarrow
        U_{k,\gamma} \qquad \text{with} \qquad \mathscr S_\omega
        \bigl(\Psi_{k,\gamma}(\Omega)\bigr)
        =
        \Omega.
\]
After shrinking \(V_{k,\gamma}\), we may assume that all its elements are positive volume forms. If \(\Omega\in V_{k,\gamma}\) is smooth, then \(\Psi_{k,\gamma}(\Omega)\) is smooth by elliptic bootstrapping. Hence, by the uniqueness of smooth normalized solutions, \(\Psi_{k,\gamma}(\Omega)\) agrees with the globally defined inverse \(\mathscr S_\omega^{-1}(\Omega)\).
Now suppose that \(\Omega_j\to\Omega_*\) in \(C^\infty\), and write \(\phi_j:=\mathscr S_\omega^{-1}(\Omega_j)\).
For every fixed \(k\), we have \(\Omega_j\in V_{k,\gamma}\) for all sufficiently large \(j\), and
therefore
\[
        \phi_j
        =
        \Psi_{k,\gamma}(\Omega_j)
        \longrightarrow
        \Psi_{k,\gamma}(\Omega_*)
        =
        \phi_*
\]
in \(C^{k+4,\gamma}\). Since \(k\) is arbitrary, \(\phi_j\to\phi_*\) in \(C^\infty\). Thus \(\mathscr S_\omega^{-1}\) is continuous, and \(\mathscr S_\omega\) is a homeomorphism.

Finally, \(\mathcal V_\alpha^+\) is convex: if \(\Omega_0,\Omega_1\in\mathcal V_\alpha^+\), then
\[
        (1-t)\Omega_0+t\Omega_1
        \in\mathcal V_\alpha^+,
        \qquad t\in[0,1].
\]
Hence \(\mathcal V_\alpha^+\) is contractible. The homeomorphism \(\mathscr S_\omega\) therefore implies that \(\mathcal H_\omega^{\mathrm{psc}}/\mathbb R\) is contractible.
\end{proof}

\section{PSC K\"ahler metrics on toric manifolds}
\label{sec:toric-PSC}

In this section, we study the existence of PSC K\"ahler metrics on toric manifolds. 


\subsection{The Abreu equation and relative \(\widehat K\)-polystability}
\label{subsec:toric-Abreu}

Let \(X\) be a compact smooth toric K\"ahler manifold of complex dimension \(n\), equipped with an action of the compact torus \(T^n\), and let \(\alpha\in\mathcal K_X\) be any K\"ahler class. Averaging a K\"ahler form in \(\alpha\) over \(T^n\) preserves both positivity and its cohomology class. We may therefore fix a \(T^n\)-invariant K\"ahler form \(\omega\in\alpha\). After choosing a normalization of its moment map, let \(\Delta\subset\mathbb R^n\) be the associated Delzant polytope, defined by
\[
        \Delta
        =
        \left\{
        x\in\mathbb R^n:
        l_m(x)\geq0,\quad 1\leq m\leq N
        \right\},
\]
where
\[
        l_m(x)
        =
        \langle x,\nu_m\rangle-\lambda_m
\]
and \(\nu_m\in\mathbb Z^n\) is the primitive inward-pointing normal to the facet \(F_m=\{l_m=0\}\).
        
Let \(d\mu\) denote Euclidean Lebesgue measure on \(\Delta\). If \(dA_m\) denotes Euclidean hypersurface measure on \(F_m\), define the boundary measure \(d\sigma\) by
\[
        d\sigma|_{F_m}
        =
        \frac{1}{|\nu_m|}\,dA_m.
\]
This is the boundary-measure normalization used in the Abreu integration formula below.

We adopt the convention of \cite{LiLianSheng2023} for the Guillemin potential:
\[
        v_\Delta
        :=
        \sum_{m=1}^N l_m\log l_m.
\]
We use the convention that \(C^\infty(\overline\Delta)\) consists of the restrictions to \(\overline\Delta\) of smooth functions defined on an open neighborhood of \(\overline\Delta\).  Set
\[
        \mathcal S(\Delta)
        :=
        \left\{
        u\in C^0(\overline\Delta):
        D^2u>0\text{ on }\Delta^\circ \text{ and}\
        u-v_\Delta\in C^\infty(\overline\Delta)
        \right\}.
\]
The standard Guillemin--Abreu formalism identifies \(\mathcal S(\Delta)\), modulo affine functions, with the space of smooth \(T^n\)-invariant K\"ahler metrics in the class \(\alpha\); see \cite{Guillemin1994,Abreu1998}.

For \(u\in\mathcal S(\Delta)\), let \((u^{ij})=(u_{ij})^{-1}\). The scalar curvature of the corresponding metric is given on \(\Delta^\circ\) by the Abreu formula
\[
        S(u)
        =
        -
        \sum_{i,j=1}^n
        \frac{\partial^2u^{ij}}
        {\partial x_i\partial x_j}.
\]
Consequently, for a prescribed function \(A\in C^\infty(\overline\Delta)\), the toric prescribed scalar curvature equation is
\[
        S(u)=A,
        \qquad
        u\in\mathcal S(\Delta).
\]
We set a linear functional associated with \(A\) by
\[
        \mathcal L_A(w)
        :=
        \int_{\partial\Delta}w\,d\sigma
        -
        \int_\Delta Aw\,d\mu.
\]
Combining Abreu's scalar curvature formula with Donaldson's integration-by-parts identity \cite[Lemma~3.3.5]{Donaldson2002}, one obtains
\[
\int_\Delta u^{ij}w_{ij}\,d\mu
=
\int_{\partial\Delta}w\,d\sigma
-
\int_\Delta S(u)w\,d\mu.
\]
Consequently, if \(S(u)=A\), then for every affine function \(\ell\), we have
\[
\mathcal L_A(\ell)=0.
\]

Following \cite[Definition~2.5 and Remark~2.6]{LiLianSheng2023}, the pair \((\Delta,A)\) is said to be \emph{relatively \(\widehat K\)-polystable} if
\[
        \mathcal L_A(w)\geq0
\]
for every finite lower semicontinuous convex function \(w:\overline\Delta\rightarrow\mathbb R\), and
\[
        \mathcal L_A(w)=0
        \quad\Longleftrightarrow\quad
        w\ \text{is affine}.
\]

We shall use the following theorem.

\begin{theorem}[Li--Lian--Sheng {\cite[Theorem~1.3]
{LiLianSheng2023}}]
\label{thm:Li-Lian-Sheng}
Let \(\Delta\) be the Delzant polytope associated with a compact smooth toric K\"ahler manifold and a fixed toric K\"ahler class, and let \(A\in C^\infty(\overline\Delta)\). Then the following conditions are equivalent:
\begin{enumerate}[label=\textup{(\roman*)}]
\item
the pair \((\Delta,A)\) is relatively \(\widehat K\)-polystable;

\item
there exists \(u\in\mathcal S(\Delta)\) satisfying
\(S(u)=A\).
\end{enumerate}
\end{theorem}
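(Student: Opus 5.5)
For (ii)\(\Rightarrow\)(i) I would follow Donaldson's argument \cite{Donaldson2002}, using the integration-by-parts identity recalled just before the statement. Suppose \(u\in\mathcal S(\Delta)\) solves \(S(u)=A\). For a convex \(w\in C^\infty(\overline\Delta)\) the identity gives
\[
\mathcal L_A(w)=\int_\Delta u^{ij}w_{ij}\,d\mu\ \ge 0,
\]
since \((u^{ij})>0\) and \(D^2w\ge0\).

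I would extend this to finite lower semicontinuous convex \(w\) in two steps. First approximate \(w\) from inside by convex functions smooth up to the boundary, for instance by mollifying after a slight dilation of \(\Delta\) about an interior point. Then pass to the limit: the interior integral converges by dominated convergence. The boundary integral needs lower semicontinuity together with monotone convergence along the facets, which is where the Guillemin normalization of \(d\sigma\) matters.

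For the equality case, write \(\mathcal L_A(w)=\int u^{ij}\,d(D^2w)\), understood against the Monge--Amp\`ere-type Hessian measure of \(w\) plus a boundary contribution. Since \(u^{ij}\) is uniformly positive on compact subsets of \(\Delta^\circ\), vanishing forces \(D^2w=0\) in the interior. One must also rule out mass concentrating at the boundary, i.e.\ jumps of \(w\) up at \(\partial\Delta\); this is controlled by the boundary term, because \(u^{ij}\) degenerates exactly along the normal \(\nu_m\) at rate \(l_m\). Hence \(w\) is affine.

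For (i)\(\Rightarrow\)(ii) I would use the variational approach. Set
\[
\mathcal F_A(u)=\mathcal L_A(u)-\int_\Delta\log\det D^2u\,d\mu ,
\]
whose Euler--Lagrange equation is \(S(u)=A\), and normalize \(u\) modulo affine functions by \(u\ge u(p_0)=0\) with \(Du(p_0)=0\) at a fixed interior point \(p_0\).

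The first step is to upgrade polystability to uniform stability: \(\mathcal L_A(w)\ge\lambda\int_{\partial\Delta}w\,d\sigma\) for all normalized convex \(w\). I would argue by contradiction with a compactness argument for normalized convex functions with bounded boundary integral. A limit with \(\mathcal L_A=0\) must be affine, hence zero under the normalization, and this contradicts the unit normalization of the boundary mass.

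The second step is to use this uniform stability and the concavity of \(\log\det\) to bound \(\mathcal F_A\) from below and obtain properness. Minimizing over normalized convex functions then gives a minimizer \(u_\infty\). By the strict convexity of \(\mathcal F_A\) it is unique modulo affine functions, and it satisfies the weak Euler--Lagrange equation \(\int u^{ij}w_{ij}=\mathcal L_A(w)\).

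The third step, regularity, is the main obstacle. In the interior one needs a strict convexity and determinant lower bound for \(u_\infty\), after which Caffarelli--Guti\'errez type estimates for the linearized Monge--Amp\`ere operator and bootstrapping give smoothness on \(\Delta^\circ\). I would try to get these interior bounds by adapting the Chen--Li--Sheng estimates for Abreu's equation \cite{ChenLiSheng2018} to the nonconstant \(A\).

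Near the boundary one must show that \(u_\infty-v_\Delta\) is smooth up to \(\partial\Delta\), including at vertices. For this I would blow up near each face, using the Delzant condition to reduce to model half-spaces. Uniqueness modulo affine functions and the weak equation identify the blow-up limits with the Guillemin model, and uniform estimates on the resulting Kähler metric on \(X\) then give the Guillemin boundary behavior.
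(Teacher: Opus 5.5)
The paper gives no proof of this statement; it imports it from Li--Lian--Sheng, so your proposal has to stand on its own.

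Your direction (ii)\(\Rightarrow\)(i) is fine. It is Donaldson's argument, and the equality case is simpler than you suggest. Once weak convergence of the Hessian measures of your approximants gives \(D^2w=0\) on \(\Delta^\circ\), write \(w=\ell\) there. Convexity along segments from interior points then gives \(w\ge\ell\) on \(\partial\Delta\), and lower semicontinuity gives \(w\le\ell\). No analysis of how \(u^{ij}\) degenerates at the facets is needed.

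Your first step of (i)\(\Rightarrow\)(ii) is essentially the known passage from polystability to uniform stability, due to Chen--Li--Sheng. Two points in it need repair. First, boundary mass is only lower semicontinuous and can vanish in the limit. The contradiction must therefore come from \(\int_\Delta Aw_k\,d\mu\to\int_\Delta Aw_\infty\,d\mu=0\) together with \(\mathcal L_A(w_k)\to0\). The convergence of \(\int_\Delta Aw_k\,d\mu\) uses the uniform smallness of \(\int w_k\,d\mu\) near \(\partial\Delta\), which the pyramid estimate provides. Second, \(w_\infty\) may be \(+\infty\) on lower-dimensional faces. It is then not a priori in the class of finite functions on which polystability is assumed.

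The genuine gap is everything after properness. A minimizer \(u_\infty\) of \(\mathcal F_A\) exists only in a class of convex functions whose Hessian may be degenerate or singular. So \(u^{ij}\) is undefined there, and \(u_\infty+tw\) need not be convex for a general test function \(w\). The weak Euler--Lagrange equation you write down is therefore not available.

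More seriously, proving directly that such a minimizer is smooth, with Guillemin boundary behavior, is exactly the step that has resisted the direct approach in general dimension. The interior estimates of Donaldson and Chen--Li--Sheng are tied to dimension two. They are also a priori bounds for smooth solutions along a continuity path, not regularity results for an arbitrary minimizer. Nothing in your sketch yields strict convexity of \(u_\infty\) or a positive lower bound for \(\det D^2u_\infty\) when \(n\ge3\).

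The boundary blow-up step is likewise unsupported. On the noncompact model domains you have neither uniqueness of weak solutions nor the estimates needed to identify the limits with the Guillemin model.

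The approach known to work in all dimensions bypasses weak minimizers altogether. One runs a continuity path of smooth torus-invariant metrics on \(X\) and converts uniform stability into properness of the associated energy. The path is then closed with a priori estimates of Chen--Cheng type; note that \(A\circ\mu\) is bounded by \(\sup_{\overline\Delta}|A|\) for every metric in the family. The Guillemin boundary behavior is then automatic, because the limit is a smooth \(T^n\)-invariant K\"ahler metric on \(X\).
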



\subsection{A positive stable density}
\label{subsec:positive-stable-density}
Fix \(b\in\Delta^\circ\).  For a finite lower semicontinuous convex function \(w:\overline\Delta\to\mathbb R\), its subdifferential at \(b\) is
\[
\partial w(b)
:=
\left\{
p\in\mathbb R^n:
w(x)\geq w(b)+\langle p,x-b\rangle
\ \text{for all }x\in\overline\Delta
\right\}.
\]
We say that \(w\) is \emph{normalized at \(b\)} if
\(w(b)=0\) and \(0\in\partial w(b)\).
Equivalently, \(w(b)=\min_{\overline\Delta}w=0\), and hence \(w\geq0\) on \(\overline\Delta\). Denote by \(\mathcal C_b(\Delta)\) the space of such normalized convex functions, and write
\[
\mathcal B_\sigma(w)
:=
\int_{\partial\Delta}w\,d\sigma.
\]

We first record the following estimate, which applies to an arbitrary compact convex polytope. This is a standard interior estimate for normalized convex functions. For the reader's convenience and in order to match our notation, we include a self-contained proof. Related estimates on the toric manifolds appear in \cite[Lemmas~5.1.3, 5.2.3, and 5.2.4]{Donaldson2002}.

\begin{lemma}
\label{lem:normalized-convex-interior-control}
Let \(\Delta\subset\mathbb R^n\) be a compact convex polytope with nonempty interior, and suppose that the restriction of \(d\sigma\) to each facet is a positive constant multiple of Euclidean hypersurface measure.  There exists \(C_1=C(\Delta,b,d\sigma)>0\) such that every \(w\in\mathcal C_b(\Delta)\) satisfies
\begin{equation*}
        \int_\Delta w\,d\mu
        \leq
        C_1\,\mathcal B_\sigma(w).
        \label{eq:convex-interior-L1}
\end{equation*}
Moreover, for every compact set \(K\subset\Delta^\circ\), there exists \(C_2=C(\Delta,b,d\sigma,K)>0\)
such that
\begin{equation*}
        \lVert w\rVert_{C^0(K)}
        +
        \operatorname{Lip}_K(w)
        \leq
        C_2\mathcal B_\sigma(w).
        \label{eq:convex-interior-C0-Lip}
\end{equation*}
\end{lemma}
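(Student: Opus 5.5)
The plan is to use radial (cone) coordinates centered at \(b\), together with convexity along rays from \(b\). Since \(b\in\Delta^\circ\), every \(x\in\overline\Delta\setminus\{b\}\) can be written uniquely as \(x=b+s(y-b)\) with \(y\in\partial\Delta\) and \(s\in(0,1]\). For \(w\in\mathcal C_b(\Delta)\), convexity along the segment from \(b\) to \(y\), together with \(w(b)=0\), gives
\[
w\bigl(b+s(y-b)\bigr)\le (1-s)w(b)+s\,w(y)= s\,w(y)\le w(y).
\]
This inequality is valid up to the boundary because \(w\) is finite, convex and lower semicontinuous on \(\overline\Delta\).

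For the first estimate, I decompose \(\Delta\) into the cones \(\mathrm{conv}(\{b\}\cup F_m)\) over the facets. On the \(m\)-th cone, the change of variables \(x=b+s(y-b)\) with \(y\in F_m\) has Jacobian
\[
d\mu = s^{n-1}\,h_m\,ds\,dA_m(y),
\]
where \(h_m=\operatorname{dist}(b,\text{aff}F_m)>0\). Integrating the ray inequality in \(s\in(0,1]\) then gives
\[
\int_{\mathrm{cone}_m}w\,d\mu\le \frac{h_m}{n+1}\int_{F_m}w\,dA_m .
\]
Since \(d\sigma|_{F_m}=c_m\,dA_m\) with \(c_m>0\), summing over \(m\) yields the first estimate with \(C_1=\max_m h_m/((n+1)c_m)\). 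Nonnegativity of \(w\) is used to drop nothing and to justify Tonelli.

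For the sup bound on a compact \(K\subset\Delta^\circ\), I enlarge \(K\) to a compact convex set \(K'\) with \(K\subset (K')^\circ\subset K'\subset\Delta^\circ\) and \(\operatorname{dist}(K',\partial\Delta)\ge 2r>0\). For \(x\in K'\), the ball \(B(x,r)\) lies in \(\Delta\). Since \(w\ge0\) is convex, for \(z\in B(x,r/2)\) the ball \(B(z,r/2)\subset\Delta\), and subharmonicity of convex functions gives
\[
w(z)\le \frac{1}{|B_{r/2}|}\int_{B(z,r/2)}w\,d\mu\le \frac{C_1}{|B_{r/2}|}\,\mathcal B_\sigma(w).
\]
Here I use the mean value inequality for convex functions on balls, which follows from Jensen's inequality, or equivalently from \(\Delta w\ge0\) in the distributional sense. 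This gives \(\sup_{K_{r/2}}w\le C\mathcal B_\sigma(w)\), where \(K_{r/2}\) is the \(r/2\)-neighborhood of \(K\), contained in \(\Delta^\circ\). Since \(w\ge0\), this already bounds \(\lVert w\rVert_{C^0(K)}\).

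For the Lipschitz bound, I apply the standard convexity estimate: for \(x\in K\), any \(p\in\partial w(x)\) satisfies
\[
|p|\le \frac{\sup_{B(x,r/2)}w - w(x)}{r/2}\le \frac{2}{r}\sup_{K_{r/2}}w .
\]
This follows by testing the supporting inequality at \(x+\tfrac r2 p/|p|\). Hence \(\operatorname{Lip}_K(w)\le \frac{2}{r}C\,\mathcal B_\sigma(w)\).

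There is no real obstacle. The only point needing care is the measure-theoretic justification of the cone change of variables and of the ray inequality at boundary points, where \(w\) is merely lower semicontinuous. Convexity on the closed segment \([b,y]\), with the value \(w(y)\) finite, still gives the bound, because a finite convex function on \(\overline\Delta\) satisfies the convexity inequality at every point, including boundary points.
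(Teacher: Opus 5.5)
Your proposal is correct and follows the paper's proof essentially step for step. Both use the same pyramid decomposition $\operatorname{Conv}(b,F)$ with volume element $d_F\,t^{n-1}\,dt\,dA_F$ and the ray inequality $w((1-t)b+ty)\le t\,w(y)$, which gives the factor $d_F/(n+1)$; the same mean-value (Jensen) inequality on interior balls for the sup bound; and the same supporting-hyperplane test for the subgradient bound, passed to $\operatorname{Lip}_K(w)$ via $\langle p_x,y-x\rangle\le w(y)-w(x)\le\langle p_y,y-x\rangle$.
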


\begin{proof}
For each facet \(F\) of \(\Delta\), write
\[
P_F:=\operatorname{Conv}(b,F)=\{(1-t)b+ty:y\in F,0\le t\le 1\}.
\]
The pyramids \(P_F\) cover \(\Delta\), and their pairwise intersections have Lebesgue measure zero outside lower-dimensional subsets. Since \(w(b)=0\), convexity gives
\[
w\bigl((1-t)b+ty\bigr)
\leq
t\,w(y).
\]

Let \(dA_F\) denote Euclidean hypersurface measure on \(F\), and let \(d_F>0\) be the Euclidean distance from \(b\) to the affine hyperplane containing \(F\).  In the preceding parametrization, the Euclidean volume element is
\[
d_F\,t^{n-1}\,dt\,dA_F(y).
\]
It follows that
\[
\int_{P_F}w\,d\mu
\leq
d_F\int_F\int_0^1
t^n w(y)\,dt\,dA_F(y) 
=
\frac{d_F}{n+1}\int_F w\,dA_F.
\]
Since \(d\sigma|_F\) is a positive constant multiple of \(dA_F\), summing over the finitely many facets yields
\[
\int_\Delta w\,d\mu
\leq
C\,\mathcal B_\sigma(w),
\]
which proves the first estimate.

By replacing \(K\) by its convex hull, we may assume that \(K\) is convex. Choose \(r>0\) such that
\[
K_{2r}
:=
\left\{
x\in\mathbb R^n:
\operatorname{dist}(x,K)\leq2r
\right\}
\Subset\Delta^\circ,
\]
and define \(K_r\) similarly.  For every \(x\in K_r\), one has \(B_r(x)\subset K_{2r}\). Jensen's inequality and
\eqref{eq:convex-interior-L1} give
\[
w(x)
\leq
\frac{1}{\mu(B_r)}
\int_{B_r(x)}w\,d\mu 
\leq
\frac{1}{\mu(B_r)}
\int_\Delta w\,d\mu 
\leq
C_K\,\mathcal B_\sigma(w).
\]
Hence
\[
\sup_{K_r}w
\leq
C_K\,\mathcal B_\sigma(w).
\]

Fix \(x\in K\), \(p\in\partial w(x)\), and a unit vector \(e\in\mathbb R^n\).  Since \(x\pm re\in K_r\), the subgradient inequality at \(x\) gives
\[
w(x\pm re)
\geq
w(x) \pm r\langle p,e\rangle.
\]
Therefore,
\[
\frac{w(x)-w(x-re)}{r}
\leq
\langle p,e\rangle
\leq
\frac{w(x+re)-w(x)}{r}.
\]
Since \(w\geq0\), it follows that
\[
|\langle p,e\rangle|
\leq
\frac{\sup_{K_r}w}{r}
\leq
C_K\,\mathcal B_\sigma(w).
\]
Taking the supremum over all unit vectors \(e\) yields
\[
|p|
\leq
C_K\,\mathcal B_\sigma(w)
\]
for every \(x\in K\) and every \(p\in\partial w(x)\).

Finally, let \(x,y\in K\), and choose \(p_x\in\partial w(x)\) and \(p_y\in\partial w(y)\).  The subgradient again inequalities imply
\[
\langle p_x,y-x\rangle
\leq
w(y)-w(x)
\leq
\langle p_y,y-x\rangle.
\]
Consequently,
\[
|w(y)-w(x)|
\leq
C_K\,\mathcal B_\sigma(w)\,|x-y|.
\]
Together with the preceding \(C^0\)-estimate, this proves the second estimate.
\end{proof}

\begin{proposition}
\label{lem:positive-density}
Let \(\Delta\subset\mathbb R^n\) be a compact convex polytope with nonempty interior, and suppose that the restriction of \(d\sigma\) to each facet is a positive constant multiple of Euclidean hypersurface measure.  Let
\[
b
:=
\fint_{\partial\Delta} x\,d\sigma
\]
be the barycenter of the boundary measure \(d\sigma\).  Then \(b\in\Delta^\circ\).  Moreover, there exist
\(f\in C^\infty(\overline\Delta)\), with \(f>0\) on \(\overline\Delta\), and \(\delta>0\) such that:
\begin{enumerate}[label=\textup{(\roman*)}]
\item for every affine function \(\ell\),
\begin{equation}
\int_\Delta \ell f\,d\mu
=
\int_{\partial\Delta}\ell\,d\sigma;
\label{eq:positive-density-affine-moments}
\end{equation}

\item for every \(w\in\mathcal C_b(\Delta)\),
\begin{equation}
\mathcal L_f(w)
\geq
\delta\,\mathcal B_\sigma(w).
\label{eq:positive-density-stability}
\end{equation}
\end{enumerate}
Consequently, the pair \((\Delta,f)\) is relatively
\(\widehat K\)-polystable.
\end{proposition}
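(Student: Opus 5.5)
The plan is to build \(f\) as \(\sigma(\partial\Delta)\) times a smooth positive probability density \(\rho\) on \(\overline\Delta\). The density \(\rho\) will have barycenter \(b\) and be almost entirely concentrated in a small ball around \(b\). The interior estimates of Lemma~\ref{lem:normalized-convex-interior-control} then make \(\int_\Delta w\,\rho\,d\mu\) small compared with \(\mathcal B_\sigma(w)\).

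\textbf{Step 1: \(b\in\Delta^\circ\).} Since \(d\sigma\) is a positive measure on \(\partial\Delta\subset\Delta\), its normalized barycenter \(b\) lies in the convex set \(\Delta\). Suppose \(b\in\partial\Delta\). Take an affine supporting function \(\ell\geq0\) on \(\Delta\) with \(\ell(b)=0\). Then \(\int_{\partial\Delta}\ell\,d\sigma=\sigma(\partial\Delta)\,\ell(b)=0\), so \(\ell=0\) \(\sigma\)-a.e. This means every facet lies in the single hyperplane \(\{\ell=0\}\), which is impossible for a polytope with nonempty interior.

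\textbf{Step 2: construction of \(f\).} Write \(c_\Delta\) for the Lebesgue barycenter of \(\Delta\). Fix \(r_0>0\) with \(K:=\overline{B_{r_0}(b)}\Subset\Delta^\circ\), and let \(C_1,C_2\) be the constants of Lemma~\ref{lem:normalized-convex-interior-control} for this \(K\). For parameters \(\tau\in(0,1)\) and \(0<r<r_0/2\), set \(q:=(b-\tau c_\Delta)/(1-\tau)\). Then \(|q-b|=\tau|b-c_\Delta|/(1-\tau)<r\) once \(\tau\) is small relative to \(r\). Let \(\psi_r\) be a radially symmetric smooth bump of unit mass supported in \(B_{r/2}(q)\subset B_r(b)\), and put
\[
\rho:=\frac{\tau}{|\Delta|}+(1-\tau)\psi_r,\qquad f:=\sigma(\partial\Delta)\,\rho .
\]
Then \(f\) is smooth and \(f\geq\tau\sigma(\partial\Delta)/|\Delta|>0\) on \(\overline\Delta\). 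Moreover \(\int_\Delta f\,d\mu=\sigma(\partial\Delta)\), and the barycenter of \(\rho\) is \(\tau c_\Delta+(1-\tau)q=b\). Together these give (i) for all affine \(\ell\).

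\textbf{Step 3: the stability estimate (ii).} This is the key step. For \(w\in\mathcal C_b(\Delta)\),
\[
\mathcal L_f(w)=\mathcal B_\sigma(w)-\sigma(\partial\Delta)\int_\Delta w\rho\,d\mu .
\]
I bound the two pieces of \(\rho\) separately.
\begin{itemize}
\item For the uniform part, the first estimate of the lemma gives \(\frac{\tau}{|\Delta|}\int_\Delta w\,d\mu\leq \frac{\tau C_1}{|\Delta|}\mathcal B_\sigma(w)\).
\item For the bump, \(w(b)=0\) and the Lipschitz bound on \(K\) give \(w\leq C_2 r\,\mathcal B_\sigma(w)\) on \(B_r(b)\subset K\), so \(\int w\psi_r\,d\mu\leq C_2r\,\mathcal B_\sigma(w)\).
\end{itemize}
Hence
\[
\mathcal L_f(w)\geq\Bigl(1-\sigma(\partial\Delta)\bigl(\tfrac{\tau C_1}{|\Delta|}+C_2r\bigr)\Bigr)\mathcal B_\sigma(w).
\]
Choose \(r\) small first, then \(\tau\) small, to make the bracket at least some \(\delta>0\). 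The main point to check is that the constants \(C_1,C_2\) do not depend on \(r\) or \(\tau\). They do not, because \(K\) is fixed before the parameters are chosen.

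\textbf{Step 4: relative \(\widehat K\)-polystability.} Let \(w\) be finite, lower semicontinuous and convex on \(\overline\Delta\). Because \(b\) is interior and \(w\) is finite, \(\partial w(b)\neq\emptyset\). Pick \(p\in\partial w(b)\) and set \(\tilde w:=w-w(b)-\langle p,x-b\rangle\in\mathcal C_b(\Delta)\). By (i), \(\mathcal L_f\) vanishes on affine functions, so
\[
\mathcal L_f(w)=\mathcal L_f(\tilde w)\geq\delta\mathcal B_\sigma(\tilde w)\geq0 .
\]
If \(\mathcal L_f(w)=0\), then \(\mathcal B_\sigma(\tilde w)=0\). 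The \(C^0\) bound of the lemma, applied on an exhaustion of \(\Delta^\circ\) by compact sets, gives \(\tilde w\equiv0\) on \(\Delta^\circ\). At a boundary point \(x\), lower semicontinuity gives \(0\leq\tilde w(x)\leq\liminf_{y\to x,\,y\in\Delta^\circ}\tilde w(y)=0\). Hence \(\tilde w\equiv0\) and \(w\) is affine. Conversely, if \(w\) is affine then \(\mathcal L_f(w)=0\) by (i).
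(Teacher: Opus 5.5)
Your proof is correct, and it takes a genuinely simpler route than the paper's.

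\textbf{The paper's route.} The paper pushes $d\sigma$ forward under the contraction $T_s(x)=b+s(x-b)$ for a fixed $s\in(0,1)$. It uses $w(T_s x)\le s\,w(x)$ to get $\int w\,d\beta_s\le s\,\mathcal B_\sigma(w)$, and then mollifies $\beta_s$, controlling the error by the Lipschitz bound on a compact set containing $T_s(\partial\Delta)$. The mollified density $h_r$ vanishes near $\partial\Delta$. So the paper adds a constant $t$ and restores all $n+1$ affine moments by inverting the moment matrix $\mathbf A_{ij}=\int_\Delta e_i\chi_j\,d\mu$ of $n+1$ bumps placed at affinely independent points of $\{h_r>0\}$.

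\textbf{Your route.} You effectively work in the degenerate case $s=0$. The point mass $\sigma(\partial\Delta)\,\delta_b$ has the same affine moments as $d\sigma$ and pairs to zero with every $w\in\mathcal C_b(\Delta)$. The only losses are then the smearing error near $b$, controlled by $w(b)=0$ and the Lipschitz bound on the fixed ball $K$, and the small uniform component, controlled by the $L^1$ bound. You obtain strict positivity and exact affine moments at the same time, by mixing with the uniform density and shifting the bump centre to $q=(b-\tau c_\Delta)/(1-\tau)$. This replaces the paper's linear-algebra correction with a one-line barycentre computation. It also gives an explicit $f$, with $\delta$ arbitrarily close to $1$ (the paper's construction gives $\delta<1/4$). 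The paper's density is modelled on a shrunken copy of the boundary measure, but nothing later in the paper uses that structure.

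\textbf{Steps 1 and 4.} These are equivalent variants of the paper's arguments. You use a supporting-hyperplane argument instead of checking each facet function, and the $C^0$ bound on an exhaustion of $\Delta^\circ$ instead of the $L^1$ bound.

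\textbf{One cosmetic slip.} The inclusion $B_{r/2}(q)\subset B_r(b)$ requires $|q-b|\le r/2$, not merely $|q-b|<r$. This is just one more smallness condition on $\tau$. Even without it, the support stays inside $K$ because $r<r_0/2$, and the bump estimate only loses a factor $3/2$.
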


\begin{proof}
We first show that \(b\in\Delta^\circ\). Let \(F_m\) be any facet, and let \(\ell_m\) be an affine defining function satisfying
\[
        \ell_m\geq0
        \quad\text{on }\Delta,
        \qquad
        F_m=\{\ell_m=0\}\cap\Delta.
\]
The function \(\ell_m\) is strictly positive on the relative interior of every facet distinct from \(F_m\). Since \(d\sigma\) restricts to a positive multiple of hypersurface measure on every facet,
\[
        \int_{\partial\Delta}\ell_m\,d\sigma
        >
        0.
\]
Since \(\ell_m\) is affine,
\[
       \ell_m(b)
        =
        \fint_{\partial\Delta}\ell_m\,d\sigma
        >
        0.
\]
This holds for every facet \(F_m\), and hence \(b\in\Delta^\circ\).

Fix \(s\in(0,1)\), and consider \(T_s(x):=b+s(x-b)\in \Delta^\circ\). Define the finite positive Borel measure
\[
\beta_s:=(T_s)_*(d\sigma)
\]
on \(\Delta\). Then
\(\operatorname{supp}\beta_s
=
T_s(\partial\Delta)
\Subset
\Delta^\circ\).

We first observe that \(\beta_s\) and \(d\sigma\) agree on affine functions.  Indeed, if \(\ell\) is affine, then
\[
\ell(T_s(x))
=
(1-s)\ell(b)+s\ell(x).
\]
Since \(b\) is the barycenter of \(d\sigma\),
\[
\ell(b)\int_{\partial\Delta}d\sigma
=
\int_{\partial\Delta}\ell\,d\sigma.
\]
Therefore,
\[
\begin{aligned}
\int_\Delta\ell\,d\beta_s=
\int_{\partial\Delta}\ell(T_s(x))\,d\sigma(x) =
(1-s)\ell(b)\int_{\partial\Delta}d\sigma
+
s\int_{\partial\Delta}\ell\,d\sigma =
\int_{\partial\Delta}\ell\,d\sigma.
\end{aligned}
\]
Thus \(\beta_s\) has the same mass and barycenter as \(d\sigma\).

Let \(w\in\mathcal C_b(\Delta)\).  Since \(w(b)=0\), for every \(x\in\partial\Delta\), convexity gives
\[
w(T_s(x))
=
w\bigl((1-s)b+sx\bigr)
\leq
s\,w(x).
\]
Therefore,
\begin{align}\label{eq:toric-stable}
\mathcal B_\sigma(w)
-
\int_\Delta w\,d\beta_s
=
\int_{\partial\Delta}
\bigl(
w(x)-w(T_s(x))
\bigr)\,d\sigma(x) 
\geq
(1-s)\mathcal B_\sigma(w).
\end{align}

We next regularize \(\beta_s\) by convolution with a standard nonnegative mollifier \(\rho\in C_c^\infty(\mathbb R^n)\) of unit mass. For \(r>0\), let
\(\rho_r(z):=r^{-n}\rho(z/r)\), and \(h_r:=\rho_r*\beta_s\). For all sufficiently small \(r>0\), \(\operatorname{supp}h_r \Subset \Delta^\circ\).
Moreover, \(h_r\in C_c^\infty(\Delta^\circ)\), with \(h_r\geq0\).

Since \(\rho\) is even, its first moment vanishes.
Hence, for every affine function \(\ell\),
\[
\begin{aligned}
\int_\Delta \ell h_r\,d\mu
&=
\int_{\operatorname{supp}\beta_s}
\int_{\mathbb R^n}
\ell(y+z)\rho_r(z)\,d\mu(z)\,d\beta_s(y) \\
&=
\int_{\operatorname{supp}\beta_s}
\ell(y)\,d\beta_s(y) \\
&=
\int_{\partial\Delta}\ell\,d\sigma.
\end{aligned}
\]

Choose \(r_0>0\) and a compact set \(K'\Subset\Delta^\circ\) such that
\[
\operatorname{supp}\beta_s+\overline{B_{r_0}(0)}
\subset K'.
\]
By Lemma~\ref{lem:normalized-convex-interior-control}, for every \(w\in\mathcal C_b(\Delta)\) we have
\[
\operatorname{Lip}_{K'}(w)
        \leq
        C_2\mathcal B_\sigma(w).
\]
Thus, for \(0<r<r_0\),
\[
\begin{aligned}
\left|
\int_\Delta wh_r\,d\mu
-
\int_\Delta w\,d\beta_s
\right|
&\leq
\int_{\operatorname{supp}\beta_s}
\int_{\mathbb R^n}
|w(y+z)-w(y)|
\rho_r(z)\,d\mu(z)\,d\beta_s(y) \\
&\leq
\operatorname{Lip}_{K^\prime}(w)
\left(
\int_{\mathbb R^n}|z|\rho_r(z)\,d\mu(z)
\right)
\beta_s(\Delta) \\
&\leq
Cr\,\mathcal B_\sigma(w).
\end{aligned}
\]
Choose \(r>0\) sufficiently small that \(2Cr\leq1-s\). Combining with \eqref{eq:toric-stable}, we obtain
\begin{equation}
\mathcal{L}_{h_r}(w)=\mathcal B_\sigma(w)
-
\int_\Delta h_rw\,d\mu
\geq
\gamma\,\mathcal B_\sigma(w),
\qquad
\gamma:=\frac{1-s}{2}>0.
\label{eq:smoothed-density-stability}
\end{equation}
Consequently, \(h_r \ge 0\) and satisfies \eqref{eq:positive-density-affine-moments} and \eqref{eq:positive-density-stability}. 
However, since \(\operatorname{supp}h_r\Subset\Delta^\circ\),
\(h_r\) vanishes in a neighborhood of \(\partial\Delta\) and hence is not strictly positive on \(\overline\Delta\). We next perturb \(h_r\) to obtain a strictly positive density while preserving \eqref{eq:positive-density-affine-moments} and \eqref{eq:positive-density-stability}.

The open set \(U=\{h_r>0\}\) is nonempty. We first choose affinely independent points \(p_0,\ldots,p_n\in U\). We then choose pairwise disjoint balls
\[
B_{\varepsilon_j}(p_j)\Subset U,
\qquad
0\leq j\leq n,
\]
and nonnegative functions \(\chi_j\in C_c^\infty\left(B_{\varepsilon_j}(p_j)\right)\) such that
\[
\int_\Delta\chi_j\,d\mu=1.
\]
Set \(e_0:=1\), and \(e_i(x):=x_i\) for \(1\leq i\leq n\).
Define
\[
\mathbf A_{ij}
:=
\int_\Delta e_i\chi_j\,d\mu,
\qquad
0\leq i,j\leq n.
\]
As the radii \(\varepsilon_j\) tend to zero, \(\mathbf A_{ij}\rightarrow e_i(p_j)\).
The matrix \(\left(e_i(p_j)\right)_{0\leq i,j\leq n}\)
is invertible because \(p_0,\ldots,p_n\) are affinely independent. We may therefore choose the radii sufficiently small that \(\mathbf A\) is invertible.

Let
\[
        m_i
        :=
        \int_\Delta e_i\,d\mu,
        \qquad
        0\leq i\leq n.
\]
For \(t>0\), we define
\[
        f_t
        :=
        h_r
        +
        t
        +
        \sum_{j=0}^n c_j(t)\chi_j, \qquad  c(t)
        :=
        -t\mathbf A^{-1}m.
\]
For \(0\leq i\leq n\), we then have
\begin{align*}
        \int_\Delta e_if_t\,d\mu
        &=
        \int_\Delta e_ih_r\,d\mu
        +
        tm_i
        +
        \sum_{j=0}^n
        \mathbf A_{ij}c_j(t)                                \\
        &=
        \int_\Delta e_ih_r\,d\mu.
\end{align*}
Thus \(f_t\) has the same affine moments as \(h_r\), and hence satisfies \eqref{eq:positive-density-affine-moments}.

We first verify that \(f_t\) is strictly positive. Write
\[
K_\chi
:=
\bigcup_{j=0}^n\operatorname{supp}\chi_j.
\]
Since \(K_\chi\Subset U=\{h_r>0\}\), compactness gives
\[
a
:=
\min_{K_\chi}h_r
>
0.
\]
Since \(c_j(t)=O(t)\) and the functions \(\chi_j\) are fixed, there exists \(C>0\) such that
\[
\left|
\sum_{j=0}^n c_j(t)\chi_j
\right|
\leq
Ct
\qquad
\text{on }\overline\Delta.
\]
Hence, for all sufficiently small \(t>0\),
\[
f_t
\geq
a+t-Ct
>
0
\qquad
\text{on }K_\chi.
\]
On \(\overline\Delta\setminus K_\chi\), all the functions \(\chi_j\) vanish, and therefore
\[
f_t
=
h_r+t
\geq
t
>
0.
\]
Thus we have
\(f_t>0\) on \(\overline\Delta\).

We then verify \eqref{eq:positive-density-stability}. For \(w\in\mathcal C_b(\Delta)\), Lemma
\ref{lem:normalized-convex-interior-control} gives
\[
\int_\Delta w\,d\mu
+
\sum_{j=0}^n
\int_\Delta\chi_jw\,d\mu
\leq
C\,\mathcal B_\sigma(w).
\]
Using \eqref{eq:smoothed-density-stability} and \(\lvert c_j(t)\rvert\leq Ct\), we obtain
\[
\begin{aligned}
\mathcal L_{f_t}(w)
&=
\mathcal L_{h_r}(w)
-
t\int_\Delta w\,d\mu
-
\sum_{j=0}^n
c_j(t)\int_\Delta\chi_jw\,d\mu \\
&\geq
\gamma\,\mathcal B_\sigma(w)
-
t\int_\Delta w\,d\mu
-
\sum_{j=0}^n
|c_j(t)|
\int_\Delta\chi_jw\,d\mu \\
&\geq
\bigl(\gamma-Ct\bigr)\mathcal B_\sigma(w).
\end{aligned}
\]
Choosing \(t>0\) sufficiently small, we obtain
\[
\mathcal L_{f_t}(w)
\geq
\frac{\gamma}{2}\mathcal B_\sigma(w).
\]
Therefore \eqref{eq:positive-density-stability} holds with \(f:=f_t\) and \(\delta:=\frac{\gamma}{2}\).

It remains to verify relative \(\widehat K\) polystability of \((\Delta,f)\).  Let \(u:\overline\Delta\rightarrow\mathbb R\) be a finite, lower-semicontinuous, convex function. Since
\(b\in\Delta^\circ\), the subdifferential \(\partial u(b)\) is nonempty.  Choose \(p\in\partial u(b)\), and define
\[
\ell(x)
:=
u(b)+\langle p,x-b\rangle,
\qquad
w:=u-\ell.
\]
Then \(w(b)=0\), \(0\in\partial w(b)\), and hence \(w\in\mathcal C_b(\Delta)\).  Since \(\mathcal L_f(\ell)=0\), by \eqref{eq:positive-density-affine-moments}, we have
\[
\mathcal L_f(u)
=
\mathcal L_f(w).
\]
Therefore, \eqref{eq:positive-density-stability} gives
\[
\mathcal L_f(u)
\geq
\delta\,\mathcal B_\sigma(w)
\geq
0.
\]

Suppose now that \(\mathcal L_f(u)=0\). Then \(\mathcal B_\sigma(w)=0\). By \eqref{eq:convex-interior-L1},
\[
        0
        \leq
        \int_\Delta w\,d\mu
        \leq
        C_1\mathcal B_\sigma(w)
        =
        0.
\]
Thus \(w=0\) almost everywhere on \(\Delta\). Since a finite convex function is continuous on \(\Delta^\circ\), it follows that \(w\equiv0\) on \(\Delta^\circ\). For \(y\in\partial\Delta\), choose \(y_k\in\Delta^\circ\) with \(y_k\to y\). Lower semicontinuity gives
\[
        w(y)
        \leq
        \liminf_{k\to\infty}w(y_k)
        =
        0.
\]
Since \(w\geq0\), we conclude that \(w(y)=0\). Hence
\(w\equiv0\) on \(\overline\Delta\), and therefore \(u=\ell\) is affine.

Conversely, \eqref{eq:positive-density-affine-moments} gives \(\mathcal L_f(\ell)=0\) for every affine function \(\ell\). Thus \((\Delta,f)\) is relatively \(\widehat K\)-polystable.
\end{proof}

We can now complete the proof of Theorem~\ref{thm:toric-main} and Corollary \ref{cor:toric-PSC-conjectures}.

\begin{proof}[Proof of Theorem~\ref{thm:toric-main}]
Let \(\alpha\in\mathcal K_X\) be any K\"ahler class. Choose a torus-invariant K\"ahler metric in \(\alpha\),
and let \(\Delta\) be its Delzant polytope.
By Proposition~\ref{lem:positive-density}, there exists
\(f\in C^\infty(\overline\Delta)\), with \(f>0\),
such that \((\Delta,f)\) is relatively \(\widehat K\)-polystable.
Theorem~\ref{thm:Li-Lian-Sheng} therefore gives a smooth
torus-invariant K\"ahler metric \(\omega_*\in\alpha\)
whose scalar curvature, viewed as a function on the moment polytope, is \(f\). Hence \(\omega_*\) has positive scalar curvature. Since \(\alpha\) was arbitrary, every K\"ahler class on \(X\) contains
a torus-invariant K\"ahler metric of positive scalar curvature.

In particular, \(S_\alpha>0\). Applying Theorem~\ref{thm:variational-main}, we conclude that for every \(\Omega\in\mathcal V_\alpha^+\), the prescribed scalar curvature measure equation \eqref{eq:scalar-measure} admits a smooth solution in \(\Hpot_\omega\), unique up to addition of constants.
\end{proof}

\begin{proof}[Proof of Corollary~\ref{cor:toric-PSC-conjectures}]
Theorem~\ref{thm:toric-main} gives \(\mathcal K_X\subseteq\mathcal K_X^{\mathrm{psc}}\).
Hence,
\[
        \mathcal K_X^{\mathrm{psc}}
        =
        \mathcal T_X^+
        =
        \mathcal K_X.
\]
Thus Question~\ref{q:class-level-PSC} has an affirmative answer for \(X\).

\end{proof}

\bibliographystyle{alpha}
\bibliography{PSC-variational-approach.bib}
\bigskip
  \footnotesize

  Zehao Sha, \textsc{Institute for Mathematics and Fundamental Physics, Hefei, China}\par\nopagebreak
  Email address: \texttt{zhsha@imfp.org.cn}\par\nopagebreak
  Homepage: \url{https://ricciflow19.github.io/}

\end{document}